\title{Subrings of singular cohomology associated to spectra}
\author{Geoffrey M\,L Powell}
\address{Laboratoire Analyse\\
G\'eom\'etrie et Applications, UMR 7539\\ 
Institut Galil\'ee\\\newline
Universit\'e Paris 13\\
93430 Villetaneuse\\France}
\email{powell@math.univ-paris13.fr} 
\urladdr{http://www.math.univ-paris13.fr/~powell}
\def\cnewtheorem#1[#2]#3{\newtheorem{#1}{#3}[subsection]
\expandafter\let\csname c@#1\endcsname\c@thm}
\let\xysavmatrix\xymatrix
\def\xymatrix{\disablesubscriptcorrection\xysavmatrix}
\newenvironment{ta}{\par\medskip{\bf Acknowledgement}\qua }{}
\newtheorem*{THM-A}{Theorem A}
\newtheorem*{THM-B}{Theorem B}
\newtheorem{thm}{Theorem}[subsection]
\theoremstyle{definition}
\newtheorem{defn}[thm]{Definition}
\newtheorem{exam}[thm]{Example}
\theoremstyle{remark}
\newtheorem{rem}[thm]{Remark}
\newtheorem{nota}[thm]{Notation}
\newtheorem{hyp}[thm]{Hypothesis}
\newcommand{\llabel}[1]{\label{#1}}
\newcommand{\reset}{\setcounter{thm}{0}}
\newcommand{\topspace}{{\mathcal{T}op}}
\newcommand{\hurewicz}[1][E]{{\mathbb{H}_{#1}}}
\newcommand{\hurewiczeven}[1][E]{{\mathbb{H}_{#1}^{\mathrm{ev}}}}
\newcommand{\esp}{\mathbf{E}}
\newcommand{\presh}[1][\calc]{{\mskip2mu\widehat{\mskip-2mu#1\mskip-2mu}\mskip2mu}}
\newcommand{\pfpresh}[1][\calc]{{{\mskip2mu\widehat{\mskip-2mu#1\mskip-2mu}\mskip2mu^{\mathrm{pf}}}}}
\newcommand{\set}{\mathrm{Set}}
\newcommand{\pfset}{{\mathcal{PS}}}
\newcommand{\mapc}{\mathrm{Map}_c}
\newcommand{\chern}[1][E]{\mathrm{Ch}_{#1}}
\newcommand{\vs}{\mathcal{E}}
\newcommand{\fdvs}{{\mathcal{E}^{\mathrm{f}}}}
\newcommand{\field}{\mathbb{F}}
\newcommand{\restrict}[1][n]{{\langle #1 \rangle}}
\newcommand{\f}{\mathcal{F}}
\newcommand{\g}{\mathcal{G}}
\newcommand{\op}{^\mathrm{op}}
\newcommand{\unstmod}{\mathcal{U}}
\newcommand{\unstalg}[1][]{{\mathcal{K}_{#1}}}
\newcommand{\nil}{{\mathcal{N}il}}
\newcommand{\rep}{\mathrm{Rep}}
\newcommand{\acat}[1][n]{\mathcal{A}_p^{(#1)}}
\newcommand{\cala}{\mathcal{A}}
\newcommand{\calc}{\mathcal{C}}
\newcommand{\cald}{\mathcal{D}}
\newcommand{\zed}{\mathbb{Z}}
\begin{document}

\begin{asciiabstract}
This paper extends the relation established for group cohomology by Green,
Hunton and Schuster between chromatic phenomena in stable homotopy theory
and certain natural subrings of singular cohomology. This exploits the
theory due to Henn, Lannes and Schwartz of unstable algebras over the
Steenrod algebra localized away from nilpotents.
\end{asciiabstract}

\begin{abstract}
This paper extends the relation established for group cohomology by Green,
Hunton and Schuster between chromatic phenomena in stable homotopy theory
and certain natural subrings of singular cohomology. This exploits the
theory due to Henn, Lannes and Schwartz of unstable algebras over the
Steenrod algebra localized away from nilpotents.
\end{abstract}

\maketitle

\section{Introduction}

There is a relation between the chromatic filtration of stable
homotopy theory and a stratification  of the algebraic variety associated to
the cohomology of a finite group which was made explicit by
the work of Green, Hunton and Schuster \cite{ghs}.  The
purpose of this paper is to investigate generalizations of this
theory, by exploiting the theory of unstable algebras over the
Steenrod algebra for singular cohomology with coefficients in a prime field. 

The motivation for the constructions was provided by the Chern ring for group
cohomology. A unitary representation $G
\rightarrow U(n)$  of a finite group $G$ induces a morphism of classifying spaces $BG
\rightarrow BU$ by passage to the infinite unitary group. The  induced morphism in singular
cohomology $H^* (BU) \rightarrow H^*
(BG)$ is a morphism of rings and the Chern subring
$\chern[]^* (BG) \subset H^* (BG)$  is defined to be  the
subring generated by the image. 

The space $BU$ is the connected component of
the base point of the space $\Omega^\infty KU$ associated to  unitary
$K$--theory. This suggests a natural definition of a Chern subring
associated to an arbitrary  spectrum $E$; a further generalization is
obtained by considering any topological space in place of $BG$. For a general
topological space $X$, there are  inclusions of algebras
\[
\chern ^*  (X) 
\hookrightarrow 
H^* _E (X) 
\hookrightarrow 
H^* (X),
\]
where $\chern ^* (X)$ generalizes the Chern subring and $H^*_E (X) $ is
 a larger subring defined by a generalization of the construction.

When singular cohomology is taken with coefficients in a prime field
$\field$,  these are morphisms of unstable algebras over the Steenrod algebra and
are therefore amenable to study from this point of view. Green, Hunton
and Schuster  studied the ring $\chern ^* (BG)$; for certain generalized
cohomology theories,  the associated variety is determined 
 in terms of a category related to the Quillen category of
elementary abelian $p$--groups associated to the  finite group $G$ \cite{ghs}.  The  theory  of unstable algebras over the Steenrod algebra localized
away from nilpotents of Henn, Lannes and Schwartz \cite{hls}, which is equivalent to working up to
$F$--isomorphism, allows a partial generalization of the theory from
classifying spaces to more general spaces.

The functor $X\mapsto H^*_E (X)$ depends upon the
topological space $X$ and not only upon the cohomology ring $H^* (X)$
as an object of the category $\unstalg$ of unstable algebras over the Steenrod algebra. For $E$ a complex-oriented cohomology theory, there is an algebraic approximation to $H_E^* (X)$; namely, for  an integer $n$ related to the height of the mod--$p$ reduction of the formal group associated to the spectrum $E$,  there is a  functor $\alpha \restrict\co \unstalg \rightarrow \unstalg$ such that  the functor $H^*_E$ is related to $\alpha
\restrict H^*$. 

The general result is given in  \fullref{cor-canon-F-mono}; it is stated below for the $I_n$--complete Johnson-Wilson spectrum, $\widehat{E(n)}$. Recall that a morphism of unstable algebras is an $F$--monomorphism if every element of the kernel is nilpotent.

\begin{THM-A}
For $n$ a positive integer and  $X$  a topological space for which the Hure\-wicz homomorphism
$[B\field ^n,X] \rightarrow \hom_{\unstalg}(H^* (X), H^*
(B\field ^n))$ is a surjection,   there is a natural morphism of unstable algebras over the Steenrod algebra
$$ H^* _{\tiny\widehat{E(n)}} (X) 
\rightarrow
\alpha \restrict H^* (X) 
$$
which is an $F$--monomorphism.
\end{THM-A}

The universal example of the Eilenberg--MacLane
spaces shows that this morphism is not  an $F$--isomorphism in general.
 The result of Green, Hunton and Schuster \cite{ghs} shows that it is an
 $F$--isomorphism for $X= BG$, the classifying space of a finite
 group; this uses the work  of Hopkins, Kuhn
and Ravenel \cite{hkr} on generalized characters to give the  construction of enough elements in the
generalized cohomology of a finite group. 

The strategy generalizes to the consideration of the Borel cohomology
of  a $G$--space  $X$ to yield the following result.

\begin{THM-B}
For $G$  a finite group, $X$  a finite $G$--CW--complex  and 
$n$  a nonnegative integer,  there is an $F$--isomorphism of
algebras
$$
H^*_{\tiny\widehat{E(n)}} (EG\times _G X) 
\cong_F
\alpha \restrict H^* (EG \times _G X).
$$
\end{THM-B}

\begin{ta}
This paper was motivated by discussions with John Hunton
 during his visits to Universit\'e Paris 13 as  `Professeur
 Invité'  in  2002.
\end{ta}


\section{Unstable modules and unstable algebras}
\label{Sec-unstable}

This section provides a rapid overview of the theory of localization
of unstable algebras over the Steenrod algebra away from
nilpotents. This theory gives the appropriate model for the algebraic
variety associated to an unstable algebra. This overview introduces the
notation which is used throughout the paper, whereas the reader is
referred to the sources for details of the theory.

Let $\cala$ denote the Steenrod algebra over $\field$, the prime field
of characteristic $p$, which is fixed throughout this paper.  The category of unstable modules over 
$\cala$ is denoted by  $\unstmod$  and  the subcategory of unstable
algebras by $\unstalg$   (see Schwartz \cite{s} and Lannes \cite{l}). Singular cohomology
is always taken with $\field$--coefficients; it takes values in the
category $\unstalg$ of unstable algebras.

\subsection{Localization away from nilpotents}
The theory of Henn, Lannes and Schwartz of localization away from
nilpotent unstable modules is a fundamental tool in studying
cohomology rings up to $F$--isomorphism (see \cite{hls,k,s}).

The full subcategory of nilpotent unstable modules is written $\nil$;
localization of the abelian category $\unstmod$ away from the
nilpotent modules yields the abelian quotient category $\unstmod / \nil$. There is the corresponding construction for the category $\unstalg/ \nil$, in which the $F$--isomorphisms become invertible \cite[page 1077]{hls}. 

An unstable module  $M \in \unstmod$ is naturally the colimit of its
finitely generated submodules in $\unstmod$, hence the vector space
$\hom_\unstmod (M, H^* (BV))$ has a natural profinite structure for
$V$ an elementary abelian $p$--group; $\hom_\unstmod (M,H^* (BV))'$
denotes the continuous dual.  Similarly, an unstable algebra $K \in \unstalg$ is naturally the filtered colimit of its
subobjects in $\unstalg$ which are finitely generated as
algebras over the Steenrod algebra, hence  the set $\hom_{\unstalg} (K, H^* (BV))$ has
a natural profinite structure.

\begin{nota}$\phantom{9}$
\begin{enumerate}
\item
Let $\vs$ denote the category of $\field$--vector spaces and let $\fdvs$
denote the full subcategory of finite-dimensional vector
spaces.
\item
Let $\f$ denote the category of functors from $\fdvs$ to $\vs$ and let 
 $\f_\omega \subset \f$ denote the full subcategory of analytic functors. (A
functor is analytic if it is the colimit of its finite subobjects,
where an object of $\f$  is termed finite if it has a composition series of
finite length).
\item
Let $\pfset$ denote the category of profinite sets and  continuous
morphisms, which  are denoted by $\mapc ( -, -)$. 
\item
Let $\g$ denote the category of functors from $\fdvs$ to $\pfset\op$; thus 
 $\g \op$ is equivalent to the category of functors from
$\fdvs\op$ to $\pfset$.
\vspace{10pt}
\item
\begin{enumerate}
\item
Let $f\co \unstmod \rightarrow \f$ denote the functor
$\{V \mapsto \hom_{\unstmod}( -, H^* (BV)) '\}$; 
\item
 let $g\co \unstalg \rightarrow \g$ denote the functor $\{ V \mapsto
 \hom_{\unstalg} (-, H^* (BV))\}$.
\end{enumerate}
\end{enumerate}
\end{nota}

The functors $f,g$ fit into a commutative diagram in which $\Theta$ denotes the forgetful functor:
\[
\xymatrix{
\unstalg \ar[r]^g
\ar@{^(->}[d]_\Theta
&
\g
\ar[d]^{\hom_{\pfset\op}(\field, -)} 
\\
\unstmod
\ar[r]_f
&
\f.
}
\]
\begin{rem}
It is frequently convenient to consider the functor $g\op \co \unstalg \op \rightarrow \g \op$ in place of $g$; the functor $g\op$ corresponds to a contravariant functor on $\unstalg$ with values in the category of presheaves of profinite sets on $\fdvs$.
\end{rem}

\begin{defn}
 \cite{hls}\qua An object $G \in \g$ is
analytic if the functor $\hom_{\pfset\op}(\field, G)$ is analytic in $\f$.  The full subcategory of analytic functors in $\g$ is denoted by $\g_\omega$.
\end{defn}

The fundamental result of Henn, Lannes, and Schwartz \cite{hls} is summarized in the following theorem.

\begin{thm}
\label{thm:hls}
{\rm \cite{hls}}\qua
The functors $f, g$ induce equivalences of categories:
\begin{eqnarray*}
&& 
f \co \unstmod / \nil
 \stackrel{\cong}{\rightarrow}
 \f _\omega;\\
&&g \co\unstalg / \nil
\stackrel{\cong}{\rightarrow}
\g_\omega.
\end{eqnarray*}
\end{thm}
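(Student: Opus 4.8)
The plan is to establish the equivalence for unstable modules first and then transport it to unstable algebras. The whole argument rests on Lannes' division functor $T_V$, the left adjoint of $-\otimes H^*(BV)$, and on its two deep properties: $T_V$ is exact, and for $V,W$ elementary abelian $p$--groups one has $T_V H^*(BW)\cong H^*(BW)\otimes \field^{\hom(V,W)}$, reflecting that $H^*(BV)$ is injective in $\unstmod$. The starting point is a clean reinterpretation of $f$. Since $\hom_\unstmod(M,H^*(BV))=\hom_\unstmod(M,\field\otimes H^*(BV))=\hom_\unstmod(T_V M,\field)=((T_VM)^0)^*$, passing to continuous duals gives a natural isomorphism $f(M)(V)\cong (T_V M)^0$, the degree-zero part of $T_V M$. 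Because $T_V$ is exact and extraction of the degree-zero part is exact, $f$ is an exact functor $\unstmod\rightarrow\f$.

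Next I would check that $f$ descends to the quotient and lands in analytic functors. As $T_V$ commutes with suspension, $f(\Sigma N)(V)=(\Sigma T_V N)^0=0$, so $f$ annihilates all suspensions; being exact and colimit-preserving it therefore kills the whole localizing subcategory $\nil$ and factors through an exact functor $\unstmod/\nil\rightarrow\f$. That the image lies in $\f_\omega$ follows by writing $M$ as the filtered colimit of its finitely generated submodules $M_\alpha$: Lannes' finiteness for $T_V$ on finitely generated modules makes each $f(M_\alpha)$ of finite composition length, so $f(M)$ is a filtered colimit of finite functors, hence analytic.

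The crux is that this descended functor is an equivalence onto $\f_\omega$. Both $\unstmod/\nil$ and $\f_\omega$ are Grothendieck categories possessing explicit cogenerating families of injectives: on the source the reduced injectives $H^*(BW)$, and on the target the standard injectives $I_W$ with $I_W(V)=\field^{\hom(V,W)}$. The computation of $T_V$ above yields $f(H^*(BW))\cong I_W$, and Lannes' evaluation of $\hom_\unstmod(H^*(BW),H^*(BW'))$ matches $\hom_\f(I_W,I_{W'})$ under $f$; since the $H^*(BW')$ are reduced, the same identification survives localization, so $f$ is fully faithful on the full subcategory of reduced injective cogenerators and hits a cogenerating family of $\f_\omega$. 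Comparing injective coresolutions then upgrades this to full faithfulness and essential surjectivity of $f\co\unstmod/\nil\rightarrow\f_\omega$. This is the main obstacle: it is exactly here that the hardest theorems of the theory are consumed --- injectivity of $H^*(BV)$, exactness of $T_V$, and the classification of the reduced injectives --- together with the verification that $f$ respects enough of the injective structure to be reconstructed from its effect on cogenerators.

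Finally I would deduce the algebra statement. An unstable algebra is a commutative monoid object of $\unstmod$ subject to the instability and Frobenius axioms, and the commutative square of the excerpt expresses $g$ in terms of $f$, the forgetful functor $\Theta$ and the linearization $\hom_{\pfset\op}(\field,-)$. Transporting the symmetric monoidal structure across $\unstmod/\nil\cong\f_\omega$, the monoid objects correspond to analytic functors carrying a compatible commutative product; the Frobenius condition forces the product to be split by its idempotents, so that such objects are precisely the functors $V\mapsto\field^{S(V)}$ for $S$ a functor to profinite sets, which is to say the objects of $\g_\omega$. This gives the equivalence $g\co\unstalg/\nil\cong\g_\omega$. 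The delicate point in this last step is the passage from a multiplicative structure on an analytic functor to a genuine profinite-set-valued functor, and it is here that the profinite topology on $\hom_\unstalg(K,H^*(BV))$ is essential.
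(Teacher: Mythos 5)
The paper you are being compared against contains no proof of this statement: it is quoted as the foundational theorem of Henn--Lannes--Schwartz, with \cite{hls} as the only justification, so the real comparison is with the argument in that source. Your sketch of the module half tracks that argument faithfully: the identification $f(M)(V)\cong (T_VM)^0$, exactness of $T_V$, vanishing on suspensions and hence on all of $\nil$, the computation $f(H^*(BW))\cong I_W$, and the matching of $\hom_{\unstmod}(H^*(BW),H^*(BW'))$ with $\hom_{\f}(I_W,I_{W'})$ via the Adams--Gunawardena--Miller/Lannes theorem are exactly the ingredients consumed in \cite{hls}. Two of your steps are justified too quickly, though neither is fatal. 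First, finite-dimensionality of $(T_VM_\alpha)^0$ for finitely generated $M_\alpha$ does not give finite composition length of $f(M_\alpha)$ --- the injectives $I_W$ themselves have finite-dimensional values but infinite length; the correct argument presents $M_\alpha$ as a quotient of a finite sum of free modules $F(n)$ and uses exactness of $f$ together with the fact that $V\mapsto H^n(BV)^*$ is a polynomial functor of degree at most $n$, hence finite. Second, ``comparing injective coresolutions'' needs $f$ to be compatible with the products implicit in cogeneration, which it is not in any obvious way; the remedy, as in \cite{hls}, is to prove the equivalence first between finitely generated objects of $\unstmod/\nil$ (which admit copresentations by \emph{finite} sums of summands of $H^*(BW)$'s) and finite functors, and then pass to filtered colimits --- a rearrangement of what you already wrote rather than a new idea. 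For the algebra half your organization genuinely differs from \cite{hls}: they do not classify commutative monoid objects in $\f_\omega$, but instead work with the explicit right adjoint $a$ of $g$ and show that the unit and counit of the adjunction become isomorphisms after localization, leaning on the module-level equivalence. Your mechanism is nonetheless the one underlying their proof: the restriction axiom forces every degree-zero element of an unstable algebra to satisfy $x^p=x$, so the degree-zero part of $T_VK$ is the algebra $\mapc(S,\field)$ of continuous functions on a profinite set $S$, and this Stone-type duality is how $\g_\omega$ emerges from algebra structures on objects of $\f_\omega$. What your version elides, and what the adjunction formalism of \cite{hls} is designed to handle, is why the localization $\unstalg/\nil$ of a non-abelian category at the $F$--isomorphisms can be computed through the abelian localization $\unstmod/\nil$ at all; that compatibility is a substantive verification, not a formal consequence of transporting a monoidal structure.
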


\begin{rem}
The functors which induce the inverse equivalences are induced by the
respective right adjoints $ a\co \g \rightarrow \unstalg $  and $m \co \f
\rightarrow \unstmod$ to the functors $g\co \unstalg
\rightarrow \g$ and $f\co\unstmod \rightarrow \f$ (see \cite{hls}).
\end{rem}



\section{Subalgebras of singular cohomology}
\label{Sec-heconst}
\reset

This section recalls the definition of the generalized Chern subring
which is associated to a spectrum, following Green, Hunton and
Schuster \cite{ghs}.  For later convenience, various quotients of the
set of homotopy classes $[Z, X]$ associated to cohomology functors are
introduced using the Hurewicz morphism. The relationships between
these functors are explored.

\begin{nota}
Throughout this section,  let $E$ denote a spectrum; the associated
$\Omega$--spectrum is denoted $\esp_\bullet$. 
\end{nota}

\subsection{Generalized Chern subrings}
Let $\topspace$ denote a  `convenient' category of topological
spaces; this can be taken to be modelled by the category of simplicial sets.

\begin{defn}\cite{ghs}\qua
For $X \in \topspace$ a topological space, let 
\begin{enumerate}
\item
$H^* _E (X)$ denote the subring of $H^* (X) $ generated by cohomology
classes represented by (unpointed) morphisms $X \rightarrow \esp_s \rightarrow
K (\field, t)$, for some integers $s,t$;
\item
$\chern^* (X)$ denote the subring of $H^* (X) $ generated by cohomology
classes represented by (unpointed) morphisms $X \rightarrow \esp_{2s} \rightarrow
K (\field, t)$, for some integers $s,t$.
\end{enumerate}
\end{defn}

\begin{rem}
The ring $\chern ^* (X)$ is referred to as the generalized Chern
subring; the definition is motivated by the case $X =BG$, the
classifying space of a finite group,  with  $E=KU$ (see \cite[Proposition 1.6 and Remark 1.8]{ghs}).
\end{rem}

\begin{prop}
For $X \in \topspace$, the algebras $H^*_E (X) $ and  $\chern ^* (X)$ have
unique unstable algebra structures over the Steenrod algebra for which the monomorphisms  
$$\chern^* (X) \hookrightarrow H^* _E (X)  \hookrightarrow H^* (X) $$
are morphisms in $\unstalg$. 

Moreover, the associations $X \mapsto H^* _E (X)$ and 
$X \mapsto \chern^* (X)$  are contravariantly functorial in $X$. 
\end{prop}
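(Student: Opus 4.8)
The plan is to deduce the entire statement from a single closure property: that the subrings $\chern^*(X)$ and $H^*_E(X)$ are stable under the action of the Steenrod algebra $\cala$ inside $H^*(X)$. Granting this, the uniqueness assertion is immediate. Since the two maps in question are inclusions of graded $\field$--algebras which are required to be morphisms in $\unstalg$, the $\cala$--action on each subring must agree with the restriction of the $\cala$--action on $H^*(X)$; injectivity of the inclusions shows that this restricted action is the only possible candidate, and the closure property is exactly what guarantees that it is well defined, the unstability and Cartan axioms being automatically inherited from $H^*(X)$.

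The heart of the argument is therefore closure, which I would establish on generators. Let $\xi$ be a generator of $H^*_E(X)$, represented by an unpointed composite of maps $f\co X \rightarrow \esp_s$ and $u\co \esp_s \rightarrow K(\field, t)$, so that $\xi = (uf)^* \iota$ with $\iota \in H^t(K(\field, t))$ the fundamental class. For any $\theta \in \cala$, naturality of the Steenrod action gives $\theta \xi = (uf)^*(\theta \iota)$. Here I would invoke the classical computation that $H^*(K(\field, t))$ is the free unstable algebra on $\iota$, so that $\theta \iota$ is a polynomial in classes of the form $\theta' \iota$ with $\theta'$ an admissible monomial; by representability, each such class is the pullback along a map $K(\field, t) \rightarrow K(\field, t')$ of a fundamental class. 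Consequently each factor $(uf)^*(\theta' \iota)$ is represented by a composite $X \rightarrow \esp_s \rightarrow K(\field, t')$ obtained from $f$, $u$, and the classifying map of $\theta' \iota$; this still factors through $\esp_s$, and is hence again a generator of $H^*_E(X)$. Therefore $\theta \xi$, being a polynomial in generators, lies in $H^*_E(X)$, and the Cartan formula propagates this from generators to arbitrary products. For $\chern^*(X)$ the identical argument applies: the $\Omega$--spectrum term carries the even index $2s$, which is unaffected by post-composition with $\theta'$, so evenness is preserved and the even-index generators are carried into $\chern^*(X)$.

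Functoriality is then formal. A morphism $h\co Y \rightarrow X$ induces the ring homomorphism $h^*\co H^*(X) \rightarrow H^*(Y)$, and a generator of $H^*_E(X)$ represented by $X \rightarrow \esp_s \rightarrow K(\field, t)$ is carried by $h^*$ to the class represented by the precomposite $Y \rightarrow X \rightarrow \esp_s \rightarrow K(\field, t)$, which is a generator of $H^*_E(Y)$. Since $h^*$ is multiplicative, it restricts to a morphism $H^*_E(X) \rightarrow H^*_E(Y)$, and likewise for $\chern^*$; compatibility with identities and composites is inherited from the functoriality of singular cohomology.

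The only substantive input is the structure of $H^*(K(\field, t))$ as a free unstable algebra, together with the representability of admissible Steenrod monomials by maps of Eilenberg--MacLane spaces. I expect the principal bookkeeping obstacle to be organizing the Cartan-formula induction so that the unstability relations in top degree (where $\theta \iota$ becomes a $p$--th power of $\iota$) are correctly absorbed into the ring structure; but no genuine difficulty arises, since all the axioms in play are inherited directly from $H^*(X)$.
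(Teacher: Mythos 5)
Your proof is correct; the paper dismisses this proposition as ``Straightforward,'' and your argument is precisely the standard one it leaves to the reader: the subrings are closed under the Steenrod action because their generating sets are, uniqueness and the axioms are inherited from $H^*(X)$, and functoriality is formal. One simplification worth noting: you do not need the Serre--Cartan computation of $H^*(K(\field,t))$ as a free unstable algebra at all, since representability alone shows that $\theta\iota$ is pulled back from a fundamental class along some map $K(\field,t)\rightarrow K(\field,t')$, so $\theta$ applied to a generator is again a single generator (still factoring through $\esp_s$, respectively $\esp_{2s}$), and the Cartan formula then handles products exactly as you describe.
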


\begin{proof}
Straightforward.
\end{proof}

\subsection{The unstable Hurewicz homomorphism}

Singular cohomology induces a natural unstable Hurewicz
transformation
\[
h_E \co E^\bullet (X) 
\rightarrow 
\hom _{\unstalg} ( H^* (\esp_\bullet) , H^* (X)). 
\]
\begin{lem}
Via the natural inclusions $\chern^* (X) \hookrightarrow H_E^*  (X)
\hookrightarrow H^* (X)$, the transformation $h_E \co E^{\bullet} (X) 
\rightarrow \hom_{\unstalg} (H^* (\esp_{\bullet} ) , H^* (X))$ induces
natural transformations:
\begin{eqnarray*}
&&h_E \co E^{\bullet} (X) 
\rightarrow \hom_{\unstalg} (H^* (\esp_{\bullet} ) , H^*_E (X));
\\
&&h_E \co E^{2\bullet} (X) 
\rightarrow \hom_{\unstalg} (H^* (\esp_{2\bullet} ) , \chern ^*(X))
\end{eqnarray*}
\end{lem}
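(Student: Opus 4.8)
The plan is to unwind the definitions, using the representability of singular cohomology by Eilenberg--MacLane spaces to recognize the image of each morphism $h_E(\xi)$ as consisting of precisely the generators appearing in the definitions of $H^*_E(X)$ and $\chern^*(X)$. Recall first that for a space $Y$ and $t \geq 1$ the group $H^t(Y)$ is represented by unpointed homotopy classes of maps $Y \rightarrow K(\field, t)$; in particular every element $\psi \in H^t(\esp_s)$ is represented by a morphism $\psi \co \esp_s \rightarrow K(\field, t)$. A class $\xi \in E^s(X) = [X, \esp_s]$ is represented by a morphism $\xi \co X \rightarrow \esp_s$, and by construction $h_E(\xi) = \xi^* \co H^*(\esp_s) \rightarrow H^*(X)$ is the associated morphism of unstable algebras.

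The key observation, which I would make next, is that for $\psi \in H^t(\esp_s)$ as above the class $h_E(\xi)(\psi) = \xi^*(\psi) \in H^t(X)$ is represented by the composite $\psi \circ \xi$, that is, by an unpointed morphism of the form $X \rightarrow \esp_s \rightarrow K(\field, t)$. This is exactly a defining generator of $H^*_E(X)$, so $\xi^*(\psi) \in H^*_E(X)$. Since this holds for every $\psi \in H^*(\esp_s)$, the whole image of $\xi^*$ is contained in $H^*_E(X)$; because $H^*_E(X) \hookrightarrow H^*(X)$ is a monomorphism in $\unstalg$ by the preceding proposition, $\xi^*$ corestricts to a well-defined morphism $H^*(\esp_s) \rightarrow H^*_E(X)$ in $\unstalg$, which gives the first transformation. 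For the second, one repeats the argument verbatim with $\xi \in E^{2\bullet}(X)$ represented by a morphism $X \rightarrow \esp_{2s}$: the composites $X \rightarrow \esp_{2s} \rightarrow K(\field, t)$ are then generators of $\chern^*(X)$, whence $\xi^*$ corestricts to a morphism $H^*(\esp_{2s}) \rightarrow \chern^*(X)$.

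Finally, naturality in $X$ of both corestricted transformations follows from the contravariant functoriality of $H^*_E(-)$ and $\chern^*(-)$ supplied by the preceding proposition, together with the elementary fact that $(\xi \circ f)^* = f^* \circ \xi^*$ for a map $f$ of spaces. I expect the argument to be essentially formal; the only point requiring a moment's care — and the mildest obstacle — is to confirm that the set-theoretic containment of the image in the subobject is compatible with the algebra and $\cala$--module structures, so that the corestriction is genuinely a morphism in $\unstalg$ rather than merely a map of underlying sets. This compatibility is immediate once one invokes the preceding proposition, which endows $H^*_E(X)$ and $\chern^*(X)$ with their structures as sub-unstable-algebras of $H^*(X)$.
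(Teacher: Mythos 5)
Your proof is correct and is precisely the ``straightforward'' argument the paper has in mind (the paper's own proof consists of the single word \emph{Straightforward}): each class $\xi^*(\psi)$ is represented by a composite $X \rightarrow \esp_s \rightarrow K(\field,t)$ (respectively $X \rightarrow \esp_{2s} \rightarrow K(\field,t)$), hence is a defining generator of $H^*_E(X)$ (respectively $\chern^*(X)$), so $\xi^*$ corestricts, and the corestriction is a morphism in $\unstalg$ because these subrings carry the sub-unstable-algebra structure from the preceding proposition. Nothing is missing.
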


\begin{proof}
Straightforward.
\end{proof}

\subsection[Quotients associated to cohomology theories]{Quotients of $[Z, X]$ associated to cohomology theories}

\label{subsect:quotients-htpy}

Throughout this section, let $Z,X$ denote spaces in $\topspace$.
 \fullref{prop:QE} gives an explicit relation between the
 functors $E^*$ and $H^*_E$. There is an analogous result for the
 Chern subring $\chern^*$, by restriction to the even spaces of the
 $\Omega$--spectrum for $E$.

\begin{nota}$\phantom{9}$
\label{nota:E-HE}
\begin{enumerate}
\item
Let $[Z, X] _{E} $ denote the image of the morphism 
$$[Z, X]
\rightarrow \hom_{E^*} (E^* (X) , E^* (Z) ) $$
induced by $E^*$--cohomology  and let $q_E\co [Z, X]
\twoheadrightarrow [Z, X]_E$ denote the canonical surjection.
\item
Let $[Z, X] _{H_E} $ denote the image of the morphism 
$$[Z, X]
\rightarrow \hom_{\unstalg} (H^*_E (X) , H^*_E (Z) ) $$
 induced by the
functor $H^* _E$  and let $q_{H_E}\co [Z, X]
\twoheadrightarrow [Z, X]_{H_E}$ denote the canonical surjection.
\item 
Let $[Z, X] _{H} $ denote the image of the morphism 
$$[Z, X]
\rightarrow \hom_{\unstalg} (H^* (X) , H^* (Z) ) $$
 induced by the
functor $H^*$  and let $q_{H}\co [Z, X]
\twoheadrightarrow [Z, X]_{H}$ denote the canonical surjection.
\end{enumerate}
\end{nota}

\begin{rem}
The notation $[-,-]_H$ does not conflict with  $[-,-]_E$,
since the image of the morphism $[Z, X] \rightarrow \hom_{H^*} (H^*
(X), H^* (Z))$ necessarily takes values in $\hom_{\unstalg} (H^*
(X), H^* (Z))$.
\end{rem}

\begin{prop}
\label{prop:QE functorial}
The constructions $[-,-]_E$ and $[-,-]_{H_E}$ define functors 
$$
\topspace \op
\times \topspace \rightarrow \mathrm{Set}$$
 and the morphisms $q_E$ and
$q_{H_E}$ are natural transformations.
\end{prop}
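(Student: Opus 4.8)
The plan is to recognize both constructions as pointwise images of natural transformations between bifunctors $\topspace\op \times \topspace \rightarrow \set$, and then to invoke a general categorical principle: the pointwise image of a natural transformation of $\set$--valued functors is a subfunctor of the target, and the induced epimorphism onto the image is itself natural.

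First I would record the general lemma. Let $\eta \co F \Rightarrow G$ be a natural transformation between functors $F, G \co \calc \rightarrow \set$, and for each object $c$ factor $\eta_c$ as $F(c) \twoheadrightarrow \mathrm{im}(\eta_c) \hookrightarrow G(c)$. For a morphism $\alpha \co c \rightarrow c'$ and an element $y = \eta_c(x) \in \mathrm{im}(\eta_c)$, naturality gives $G(\alpha)(y) = \eta_{c'}(F(\alpha)(x)) \in \mathrm{im}(\eta_{c'})$, so $G(\alpha)$ restricts to a map $\mathrm{im}(\eta_c) \rightarrow \mathrm{im}(\eta_{c'})$. This makes $c \mapsto \mathrm{im}(\eta_c)$ a subfunctor of $G$, and the same computation shows that the canonical surjections $F(c) \twoheadrightarrow \mathrm{im}(\eta_c)$ assemble into a natural transformation $F \Rightarrow \mathrm{im}(\eta)$.

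Next I would identify the relevant data. The bifunctor $[-,-] \co \topspace\op \times \topspace \rightarrow \set$ sends a morphism $(f, g)$, with $f \co Z' \rightarrow Z$ and $g \co X \rightarrow X'$, to $\phi \mapsto g \phi f$. The target $\hom_{E^*}(E^*(-), E^*(-))$ is a bifunctor of the same variance: the contravariant functoriality of $E^*$ turns $f$ into $f^* \co E^*(Z) \rightarrow E^*(Z')$ and $g$ into $g^* \co E^*(X') \rightarrow E^*(X)$, so that $\psi \mapsto f^* \psi g^*$ is the structure map, and the assignment $\phi \mapsto \phi^*$ is natural (this is precisely the functoriality of $E^*$--cohomology). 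Taking $\eta$ to be this transformation, the lemma yields that $[-,-]_E = \mathrm{im}(\eta)$ is a functor and that $q_E$ is natural. The argument for $[-,-]_{H_E}$ is identical, with $\hom_{\unstalg}(H^*_E(-), H^*_E(-))$ in the role of the target; here the contravariant functoriality of $H^*_E$, together with the naturality of the inclusion $H^*_E \hookrightarrow H^*$ established in the preceding Proposition, ensures both that the target is a bifunctor into $\set$ and that $\phi \mapsto H^*_E(\phi)$ is natural.

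There is no substantial obstacle: the content is entirely the naturality of $\eta$, which is nothing more than the functoriality of the cohomology theories $E^*$ and $H^*_E$ already in hand. The only point demanding care is the bookkeeping of variances in $\topspace\op \times \topspace$, to confirm that the $\hom$--targets are covariant in $X$ and contravariant in $Z$ in the same way as $[-,-]$, so that $\eta$ really is a transformation of parallel bifunctors.
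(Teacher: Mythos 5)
Your proof is correct, and it is precisely the routine verification that the paper leaves implicit (its entire proof reads ``Straightforward''): the image of a natural transformation of $\set$--valued bifunctors is a subfunctor, and the surjections onto the image are natural, with naturality of the relevant transformations supplied by the functoriality of $E^*$ and of $H^*_E$ established in the preceding Proposition. Packaging the argument through the general image-of-a-natural-transformation lemma, with the variance check on $\topspace\op\times\topspace$, is exactly the intended content, so there is nothing to correct.
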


\begin{proof}
Straightforward.
\end{proof}

\begin{lem}
\label{lem:Hurewicz-surj}
If the Hurewicz morphism $[Z, X] \rightarrow \hom_{\unstalg}
(H^* (X), H^* (Z)) $ is surjective, then there is a canonical
isomorphism
$$
[Z, X]_H \cong 
\hom_{\unstalg}
(H^* (X), H^* (Z))
.
$$
\end{lem}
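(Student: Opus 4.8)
The plan is to prove this by directly unwinding the definition of $[Z,X]_H$: the asserted isomorphism will turn out to be nothing more than the tautological inclusion of the image of the Hurewicz morphism into its codomain, and the hypothesis is exactly what forces that inclusion to be a bijection. By \fullref{nota:E-HE}(3), the set $[Z,X]_H$ is \emph{defined} to be the image of the morphism $[Z,X] \rightarrow \hom_{\unstalg}(H^*(X), H^*(Z))$ induced by $H^*$, and the canonical surjection $q_H$ is the corestriction of that morphism onto its image. So there is nothing to construct beyond recognising that the morphism in the hypothesis and the morphism defining $[Z,X]_H$ are one and the same.

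Concretely, first I would record that the Hurewicz morphism of the hypothesis factors canonically as
\[
[Z,X] \stackrel{q_H}{\twoheadrightarrow} [Z,X]_H \hookrightarrow \hom_{\unstalg}(H^*(X), H^*(Z)),
\]
where the second arrow is the set-theoretic inclusion of the image into the codomain; this factorisation is immediate, since $q_H$ is surjective by construction and the inclusion is injective by construction. I would then take this inclusion as the candidate for the canonical isomorphism. Invoking the hypothesis, surjectivity of the Hurewicz morphism means that the composite displayed above is surjective; as $q_H$ is already surjective, this is equivalent to surjectivity of the inclusion. An injective surjection of sets is a bijection, so the inclusion is an isomorphism in $\mathrm{Set}$, which is precisely the claimed canonical isomorphism.

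The only point requiring attention — and it is the "hard part" solely in the sense of bookkeeping — is to confirm that the Hurewicz morphism named in the statement is literally the morphism induced by $H^*$ appearing in \fullref{nota:E-HE}(3), so that the words "image" and "surjective" refer to a single map. Once this identification is in place the conclusion is automatic, which is why the proof is elementary.
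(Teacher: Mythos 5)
Your proof is correct and matches the paper's approach exactly: the paper dismisses this lemma with the single word ``Tautological,'' and your argument is precisely the unwinding of that tautology, namely that $[Z,X]_H$ is by definition the image of the Hurewicz morphism, so surjectivity of that morphism makes the inclusion of the image into the codomain a bijection. Nothing further is needed.
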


\begin{proof}
Tautological.
\end{proof}

There is a restriction morphism 
$$\hom_{\unstalg} (H^* (X) , H^* (Z))
\rightarrow \hom_{\unstalg} (H^* _E (X), H^* (Z))$$ 
 induced by
the inclusion $H^* _E (X)\hookrightarrow H^* (X)$. However, this need
not factor across the inclusion $\hom_{\unstalg}  (H^* _E (X), H^*_E (Z))
\hookrightarrow \hom_{\unstalg}  (H^* _E (X), H^* (Z))$ which is induced by $H^*
_E (Z)\hookrightarrow H^* (Z)$. When restricting to the image of the
Hurewicz morphism, the situation is simpler.

\begin{prop}
\label{prop:HE}
There is a canonical surjection 
$
p_H\co [Z, X] _H \rightarrow [Z, X]_{H_E}
$
which factorizes the morphism $q_{H_E}\co [Z,X]\rightarrow [Z, X]_{H_E}$.
\end{prop}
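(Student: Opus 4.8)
The plan is to exhibit $p_H$ as a restriction map and to see that it is well defined precisely because we work with the image of the Hurewicz morphism rather than with all of $\hom_{\unstalg}(H^*(X), H^*(Z))$. Both $[Z,X]_H$ and $[Z,X]_{H_E}$ are quotients of the set $[Z,X]$ of homotopy classes, via the surjections $q_H$ and $q_{H_E}$; concretely, $q_H(f)$ is the induced morphism $H^*(f)\co H^*(X) \rightarrow H^*(Z)$ and $q_{H_E}(f)$ is the induced morphism $H^*_E(f)\co H^*_E(X) \rightarrow H^*_E(Z)$. To construct $p_H$ with $p_H \circ q_H = q_{H_E}$, it suffices to show that the equivalence relation on $[Z,X]$ determined by $q_H$ refines the one determined by $q_{H_E}$; that is, whenever $H^*(f) = H^*(g)$ for $f,g \in [Z,X]$, one must have $H^*_E(f) = H^*_E(g)$.

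First I would invoke the functoriality of $X \mapsto H^*_E(X)$ established above, which says that for any map $f\co Z \rightarrow X$ the morphism $H^*(f)\co H^*(X) \rightarrow H^*(Z)$ carries the subring $H^*_E(X)$ into $H^*_E(Z)$, its restriction there being exactly $H^*_E(f)$. Consequently $H^*_E(f)$ is determined by $H^*(f)$ as the restriction of the latter to the subalgebra $H^*_E(X)$, so $H^*(f) = H^*(g)$ forces their restrictions to agree and hence $H^*_E(f) = H^*_E(g)$. This is the required refinement of equivalence relations, and it yields a unique factorization $p_H\co [Z,X]_H \rightarrow [Z,X]_{H_E}$ with $p_H \circ q_H = q_{H_E}$; surjectivity of $p_H$ is then immediate, since $q_{H_E} = p_H \circ q_H$ is already surjective.

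The one point that genuinely uses the situation at hand is exactly the phenomenon flagged in the preceding discussion: an arbitrary morphism $\psi\co H^*(X) \rightarrow H^*(Z)$ of unstable algebras need not send $H^*_E(X)$ into $H^*_E(Z)$, so its restriction need not define an element of $\hom_{\unstalg}(H^*_E(X), H^*_E(Z))$. I expect this to be the main (and essentially the only) obstacle. It is overcome by observing that every element of $[Z,X]_H$ is of the form $H^*(f)$ for an honest topological map $f$, so that the restriction does land in $H^*_E(Z)$ by functoriality. Equivalently, $p_H$ is the map induced on images by the restriction $\hom_{\unstalg}(H^*(X), H^*(Z)) \rightarrow \hom_{\unstalg}(H^*_E(X), H^*(Z))$, and the content of the statement is precisely that, on the subset $[Z,X]_H$, this restriction factors through $\hom_{\unstalg}(H^*_E(X), H^*_E(Z))$.
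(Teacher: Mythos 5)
Your proposal is correct and follows essentially the same route as the paper: both arguments rest on the observation that, for a topological map $f$, the morphism $H^*_E(f)$ is the restriction of $H^*(f)$ to the subalgebra $H^*_E(X)$ (landing in $H^*_E(Z)$ by functoriality), and is therefore determined by $H^*(f)$, which gives the factorization $p_H \circ q_H = q_{H_E}$ and surjectivity at once. Your write-up merely makes explicit the well-definedness (refinement of equivalence relations) that the paper's terse proof leaves implicit, and correctly identifies the role of restricting to the image of the Hurewicz morphism.
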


\begin{proof}
For $f\co Z\rightarrow X$, the composite $H^* _E (X) \hookrightarrow H^*
(X) \stackrel{H^* (f)} {\longrightarrow}
H^* (Z)$ factors canonically across $H^* _E (f)\co H^* _E (X) \rightarrow H^*
_E(Z)$; this defines the map, which is surjective.
\end{proof}

\begin{prop}
\label{prop:QE}
There is a natural commutative diagram
$$
\xymatrix{
[Z, X] 
\ar@{->>}[r]^{q_E}
\ar@{->>}[d]_{q_{H_E}}
&
[Z, X]_E
\ar@{->>}[ld]^{Q_E}
\\
[Z, X]_{H_E}
}
$$
in which the map $Q_E$ is surjective.
\end{prop}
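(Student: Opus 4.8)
The plan is to produce $Q_E$ by showing that the equivalence relation on $[Z,X]$ cutting out $[Z,X]_E$ is finer than the one cutting out $[Z,X]_{H_E}$, so that the quotient $q_{H_E}$ necessarily factors through $q_E$. Both $q_E$ and $q_{H_E}$ are surjections from $[Z,X]$: the former identifies two homotopy classes exactly when they induce the same map in $E^*$--cohomology, the latter exactly when they induce the same map on $H^*_E$. Thus it suffices to prove the implication: if $f,f'\co Z\rightarrow X$ satisfy $f^* = f'^*\co E^*(X)\rightarrow E^*(Z)$, then $H^*_E(f) = H^*_E(f')\co H^*_E(X)\rightarrow H^*_E(Z)$.

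First I would reduce the comparison to a generating set. By definition, $H^*_E(X)$ is generated as a subalgebra of $H^*(X)$ by the classes $v^*(c)$, where $v\in[X,\esp_s]$ is (the homotopy class of) a representative of an element of $E^s(X)$ and $c\in H^t(\esp_s)$, with $v^*(c)$ the pullback of $c$ along $v$. Since $H^*_E(f)$ is a morphism of unstable algebras, in particular a ring homomorphism, it is enough to compare $H^*_E(f)$ and $H^*_E(f')$ on these generators.

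The key step is then the computation of $H^*_E(f)$ on such a generator, namely
\[
H^*_E(f)\bigl(v^*(c)\bigr) = (v\circ f)^*(c) = (f^*v)^*(c),
\]
where $f^*v\in[Z,\esp_s]$ is the image of $v$ under $f^*\co E^s(X)\rightarrow E^s(Z)$, which is simply precomposition with $f$. Hence $H^*_E(f)$ restricted to generators depends on $f$ only through the induced map $f^*$ in $E^*$--cohomology. Consequently $f^* = f'^*$ forces equality of $H^*_E(f)$ and $H^*_E(f')$ on all generators, and therefore on all of $H^*_E(X)$. This is precisely the refinement of equivalence relations, so the universal property of the quotient $q_E$ yields a unique $Q_E\co[Z,X]_E\rightarrow[Z,X]_{H_E}$ with $Q_E\circ q_E = q_{H_E}$. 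Surjectivity of $Q_E$ is then immediate, since $q_{H_E} = Q_E\circ q_E$ is already surjective, and naturality of the whole triangle in $(Z,X)$ follows from \fullref{prop:QE functorial} together with the naturality of $q_E$ and $q_{H_E}$.

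The main obstacle — indeed the only nonformal point — is the identification in the displayed equation: one must verify that precomposition with $f$ at the level of the generating classes of $H^*_E$ is exactly the $E^*$--cohomology map $f^*$ applied to the representing homotopy classes $v$. Once the generators of $H^*_E(X)$ are correctly recorded as pullbacks of $H^*(\esp_s)$--classes along elements of $E^*(X)$, this is a direct unwinding of definitions; the only care needed is to keep track of (un)pointed homotopy classes, so that the set of representatives $v$ matches exactly the data that $q_E$ records.
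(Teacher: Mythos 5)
Your proof is correct, and it is precisely the unwinding of definitions that the paper dismisses as ``Straightforward'': the relation defining $[Z,X]_E$ refines the one defining $[Z,X]_{H_E}$ because the generators $v^*(c)$ of $H^*_E(X)$ pull back along $f$ through the class $f^*[v]\in E^*(Z)$ alone, so $Q_E$ exists by the universal property of the quotient and inherits surjectivity and naturality. Nothing is missing.
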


\begin{proof}
Straightforward.
\end{proof}



\section{Fundamental properties}
\label{Sec-fundamental}
The results of
Green, Hunton and Schuster \cite{ghs} depend on certain fundamental properties of the cohomology
theory $E^*$ which  are developed  axiomatically in this
section. These results are rephrased here by using the functors
$[-,-]_E$ and $[-,-]_{H_E}$ which were introduced in  \fullref{subsect:quotients-htpy}.

Throughout this section, $E$ denotes a fixed spectrum, $n$ denotes a
nonnegative integer, and $Z$ and $X$ denote topological spaces
in $\topspace$.

\subsection{The Hurewicz hypotheses}

The following hypothesis allows morphisms in $E$--cohomology to be detected in cohomology. 

\begin{hyp}
The Hurewicz hypothesis 
$\hurewicz (X)$ holds if  
$$h_E \co E^\bullet (X) \rightarrow \hom_{\unstalg} (H^*
(\esp_\bullet), H^* (X) )
$$
 is a monomorphism.
\end{hyp}

\begin{rem} There is an even space variant:  the even Hurewicz 
hypothesis $\hurewiczeven (X)$ holds if 
$h_E \co E^{2\bullet} (X) \rightarrow \hom_{\unstalg} (H^*
(\esp_{2\bullet}), H^* (X) )$ is a monomorphism.
\end{rem}

\begin{exam}
\label{exam:Kn}
\cite[Theorem 4.1]{ghs}\qua  Recall that the spectrum $\widehat{E(n)}$ is the complete version of the Johnson--Wilson theory $E(n)$ \cite{bw} and has coefficient ring  the completion of
 $\zed _{(p)} [v_ 1, \ldots , v_n , v_n ^{-1}]$ with respect
to the ideal $I_n \co= (p, v_1, \ldots , v_{n-1})$.  The
$I_n$--adically complete Johnson--Wilson spectrum $\widehat{E(n)}$ satisfies the hypothesis
$\hurewicz[\widehat{E (n)}] (BV)$, for any $V \in \fdvs$.
\end{exam}

The following Proposition is the fundamental role of the Hurewicz
hypothesis $\hurewicz (Z)$ in separating morphisms in
$E$--cohomology. Recall there is a natural transformation $Q_E \co [Z, X]_E
\rightarrow [Z, X]_{H_E}$ which was introduced in the statement of
\fullref{prop:QE}.

\begin{prop}
\label{prop:E-Hurewicz}
Suppose that  
 the Hurewicz hypothesis $\hurewicz (Z)$ holds,  then the map 
$$Q_E \co [Z, X]_E
\rightarrow [Z, X]_{H_E}
$$
is an isomorphism.
\end{prop}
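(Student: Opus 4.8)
The plan is to exploit the fact that $Q_E$ is already known to be surjective by \fullref{prop:QE}, so that it suffices to prove injectivity. Unwinding the definitions from \fullref{nota:E-HE}, the set $[Z,X]_E$ is the quotient of $[Z,X]$ identifying two maps $f,g\co Z \to X$ precisely when $E^*(f) = E^*(g)$, while $[Z,X]_{H_E}$ identifies them precisely when $H^*_E(f) = H^*_E(g)$. Since $Q_E$ sends the $E$--class of $f$ to its $H_E$--class, injectivity of $Q_E$ is exactly the implication that $H^*_E(f) = H^*_E(g)$ forces $E^*(f) = E^*(g)$, and this is what I would establish.

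First I would record the structure of the subalgebra $H^*_E(X) \subseteq H^*(X)$: by definition it is generated, as a ring, by the classes $\phi^*(v)$, where $\phi\co X \to \esp_s$ runs over representatives of elements of $E^s(X)$ (for all $s$) and $v \in H^*(\esp_s)$, viewed as a map $\esp_s \to K(\field,t)$, runs over cohomology classes. As $H^*_E(f)$ and $H^*_E(g)$ are ring homomorphisms, the hypothesis $H^*_E(f) = H^*_E(g)$ is equivalent to their agreement on this generating set. Evaluating gives $H^*_E(f)(\phi^* v) = (\phi f)^* v$ and $H^*_E(g)(\phi^* v) = (\phi g)^* v$, so the hypothesis is equivalent to the family of equalities $H^*(\phi f) = H^*(\phi g)$ of morphisms $H^*(\esp_s) \to H^*(Z)$, indexed by $\phi$ and $s$.

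The heart of the argument is then to invoke the Hurewicz hypothesis $\hurewicz (Z)$. The composites $\phi f, \phi g\co Z \to \esp_s$ represent classes in $E^s(Z)$, and the unstable Hurewicz homomorphism $h_E$ carries these to $H^*(\phi f)$ and $H^*(\phi g)$ respectively. Since $\hurewicz (Z)$ asserts that $h_E$ is a monomorphism and we have just shown $H^*(\phi f) = H^*(\phi g)$, we deduce $[\phi f] = [\phi g]$ in $E^s(Z)$. Because this holds for every representative $\phi$ of an element of $E^s(X)$ and every $s$, and because $E^*(f)$ and $E^*(g)$ are given by $[\phi] \mapsto [\phi f]$ and $[\phi] \mapsto [\phi g]$, I conclude $E^*(f) = E^*(g)$, which is the required injectivity.

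The step I expect to demand the most care is the identification of the generators of $H^*_E(X)$ as the classes $\phi^* v$ and the verification that testing the equality $H^*_E(f) = H^*_E(g)$ against them translates precisely into the indexed family $H^*(\phi f) = H^*(\phi g)$; once this dictionary is in place, the conclusion is a direct application of the monomorphism hypothesis. One must also respect the (unpointed) conventions used to define $H^*_E$, so that the classes $\phi^* v$ range over exactly the generating set, and be careful that the Hurewicz homomorphism is applied at the source $Z$, where the hypothesis $\hurewicz (Z)$ is available, and not at $X$.
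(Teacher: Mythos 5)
Your proof is correct and follows essentially the same route as the paper's: reduce to injectivity via \fullref{prop:QE}, observe that $H^*_E(f)=H^*_E(g)$ determines the singular-cohomology images of the classes $E^*(f)[\phi]$ and $E^*(g)[\phi]$, and then apply the monomorphism hypothesis $\hurewicz(Z)$ to conclude $E^*(f)=E^*(g)$. The paper packages this as a naturality square for $h_E$ (using its refinement through $H^*_E(Z)$), whereas you unwind the same argument elementwise on the generators $\phi^*v$ of $H^*_E(X)$; the two are interchangeable.
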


\begin{proof}
It is sufficient to establish that the map is injective. Let $f_1, f_2
\co Z \rightrightarrows X$ be morphisms for which 
 the morphisms $
H^*_E (f_1) , H^*_E(f_2) \co 
H_E^* (X) 
\rightrightarrows
H_E^* (Z)
$
coincide and  consider the commutative diagram
\[
\xymatrix{
E^* (X) 
\ar@<1ex>[r]^{f_1^*}
\ar@<0ex>[r]_{f_2^*}
\ar[d]
 &
E^* (Z) 
\ar[d]
\\
\hom_{\unstalg} (H^* (\esp_\bullet) , H^*_E (X) ) 
\ar@<1ex>[r]
\ar@<0ex>[r]&
\hom_{\unstalg} (H^* (\esp_\bullet) , H^*_E (Z) ),
}
\]
where the bottom arrows are induced by $H^*_E (f_1) , H^*_E(f_2)$ and
hence coincide. The right hand vertical arrow is a monomorphism by hypothesis
$\hurewicz (Z)$; the result follows.
\end{proof}

\begin{rem}
\fullref{prop:E-Hurewicz} has an even space  variant in which
the hypothesis $\hurewicz (Z) $ is replaced by $\hurewiczeven (Z)$, the subring $H^* _E (-)$ is
replaced by the Chern subring $\chern ^* (-)$ and the statement is
restricted to $E^{2\bullet} (-)$.
\end{rem}

\subsection[Height--n detection]{Height--$n$ detection}
\label{subsect:height-n}

The examples of spectra $E$ which are of  interest in applications
correspond to complex-oriented
cohomology theories. For such theories, there is a description of the
cohomology $E^* (BV)$ of an elementary abelian $p$--group in terms of
the associated formal group. The work of Hopkins, Kuhn and
Ravenel \cite{hkr} highlights a fundamental property of a cohomology
theory, which is termed height--$n$ detection here. This is the key
ingredient to the constructions of this paper.

\begin{nota}
For  $\mathcal{H}$ a functor  from $\topspace \op$ to the category of
sets, $V$ a finite-dimensional $\field$--vector space and $n$ a
nonnegative integer, denote by
$$
\delta_{\mathcal{H}}\co 
\mathcal{H} (BV) 
\rightarrow 
\mathcal{H} (B \field ^n) ^{\hom (\field ^n , V)}
$$
the morphism adjoint to the evaluation morphism $\hom
(\field ^n , V) \times \mathcal{H} (BV) \rightarrow \mathcal{H}(B \field
  ^n)$. 
\end{nota}

\begin{defn}
The spectrum $E$  satisfies the height--$n$ detection
property if the morphism
\[
\delta_{E^*}\co
E^* (BV) 
\rightarrow 
E^* (B \field ^n ) ^{\hom (\field ^n , V)}
\]
is a monomorphism for all $V\in \fdvs$.
\end{defn}

\begin{rem}
There is an even space variant: 
the spectrum $E$ satisfies  the even height--$n$ detection
property if the above condition is satisfied for the functor  $E^{2
*} $.
\end{rem}

\begin{hyp}
\label{hyp:E-hkr}
Let $E$ be a complex-oriented ring spectrum which satisfies the
following conditions:
\begin{enumerate}
\item
the coefficient ring $E^*$ is a complete, local graded ring with graded
maximal ideal $\mathfrak{m}$;
\item
the graded residue field $\kappa \co= E^* / \mathfrak{m}$ has
characteristic $p>0$;
\item
the ring $p^{-1} E^*$ is nonzero;
\item
the induced formal group over $E^* / \mathfrak{m}$ has height $n$.
\end{enumerate}
\end{hyp}

\begin{prop}
\label{prop:hkr}
A complex-oriented ring spectrum  $E$ which satisfies \fullref{hyp:E-hkr} satisfies the height--$n$ detection property.
\end{prop}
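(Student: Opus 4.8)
The plan is to compute everything through the complex orientation and the formal group law $F$ of $E$, reduce to a single-variable statement by freeness, and then invoke the separation of the $p$--torsion that finite height provides after inverting $p$, in the spirit of Hopkins--Kuhn--Ravenel \cite{hkr}. First I would identify the source and the maps $\phi^*$. Writing $d = \dim_\field V$ and choosing an isomorphism $V \cong \field^d$, the complex orientation gives $E^*(B\field) \cong E^*[[x]]/([p]_F(x))$, where $[p]_F(x)$ is the $p$--series of $F$; since this is free over $E^*$ the K\"unneth isomorphism yields $E^*(BV) \cong E^*[[x_1,\dots,x_d]]/([p]_F(x_1),\dots,[p]_F(x_d))$. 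A homomorphism $\phi\co \field^n \to V$ is an $(n,d)$--matrix $(a_{ij})$ over $\field$, and the induced map $\phi^*$ is the substitution $x_j \mapsto \lambda^{(\phi)}_j$, where $\lambda^{(\phi)}_j = \sum_i^F [a_{ij}] y_i$ is the formal linear form read off the $j$--th row and $y_1,\dots,y_n$ are the coordinates of $E^*(B\field^n)$. As $\phi$ varies, its rows range independently over all of $\field^n$, so the substituted forms range over all $d$--tuples drawn from the $p^n$ formal functionals $\lambda_c = \sum_i^F [c_i] y_i$, $c \in \field^n$.

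Second I would make the modules finite and free and reduce to $d=1$. Since the reduction of $F$ over $\kappa = E^*/\mathfrak{m}$ has height $n$ (condition (4) of \fullref{hyp:E-hkr}), one has $[p]_F(x) \equiv (\text{unit})\,x^{p^n}$ modulo $\mathfrak{m}$; as $E^*$ is complete and local, Weierstrass preparation presents $[p]_F(x)$ as a unit times a monic polynomial of degree $p^n$. Hence $E^*(B\field)$ is $E^*$--free of rank $p^n$ and $E^*(BV)$ is $E^*$--free of rank $p^{nd}$. Under the K\"unneth identification of targets, $\delta_{E^*}$ for $BV$ is the $d$--fold tensor power over $E^*$ of the map for $B\field$, and a tensor product of injections between flat $E^*$--modules is again injective, so it suffices to treat $d=1$.

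Third, and this is where the remaining hypotheses enter, I would prove injectivity for $d=1$. After Weierstrass preparation a class is represented by a polynomial $a(x)$ of degree $<p^n$, and the assumption $\delta_{E^*}(a)=0$ says $a(\lambda_c)=0$ for all $p^n$ values $\lambda_c$. This is a square Vandermonde system, so it forces $a=0$ once the pairwise differences $\lambda_c - \lambda_{c'}$ are non--zero--divisors. Because the ordinary difference $\lambda_c - \lambda_{c'}$ and the formal difference $\lambda_{c-c'}$ share the same linear leading term, it is enough to know that each nonzero torsion value $\lambda_e$ ($e\ne0$) is a non--zero--divisor. Over $\kappa$ this fails, but condition (3), $p^{-1}E^*\ne0$, lets me pass to $p^{-1}E^*(B\field^n)$, into which the free module $E^*(B\field^n)$ injects once $p$ acts injectively on $E^*$; there the $p$--torsion of $F$ becomes \'etale (already $[p]_F'(0)=p$ is invertible), so the $\lambda_e$ are separated and the differences are units.

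The main obstacle is precisely this last step: establishing, from finite height together with $p^{-1}E^*\ne0$, that the nonzero $p$--torsion points of $F$ become non--zero--divisors (indeed that the whole $p$--torsion scheme becomes \'etale) after inverting $p$, and controlling the Vandermonde/interpolation separation over the graded ring $p^{-1}E^*$ rather than over a field. This is exactly the finite--height input of \cite{hkr} and is the heart of the argument; the two K\"unneth/Weierstrass steps above are the routine bookkeeping that reduces \fullref{prop:hkr} to it.
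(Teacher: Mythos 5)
Your second step---the reduction to $d=1$---is where the argument fails, and the failure is essential rather than a matter of bookkeeping. Under the K\"unneth identification $E^*(BV)\cong E^*(B\field)^{\otimes_{E^*}d}$, the map $\delta_{E^*}$ for $BV$ is \emph{not} the $d$--fold tensor power of the map for $B\field$. Writing $\phi\in\hom(\field^n,V)$ as a $d$--tuple $(\psi_1,\dots,\psi_d)$ of linear forms $\psi_j\co \field^n\rightarrow\field$, the $\phi$--component of $\delta_{E^*}$ sends $x_1\otimes\cdots\otimes x_d$ to the \emph{product} $\psi_1^*(x_1)\cdots\psi_d^*(x_d)$ in $E^*(B\field^n)$; that is, it is the tensor-power map followed, factor by factor, by the multiplication $E^*(B\field^n)^{\otimes_{E^*}d}\rightarrow E^*(B\field^n)$, which is far from injective. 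So injectivity of the tensor power (which your flatness remark does give) says nothing about injectivity of $\delta_{E^*}$. Note moreover that the case $d=1$---indeed any $d\le n$---is trivially true for \emph{any} contravariant functor, since a split surjection $\psi\co \field^n\twoheadrightarrow V$ makes $(B\psi)^*$ a split injection; all the content of the proposition lives at $d>n$, exactly the range your reduction discards. The gap is exposed by an example: your two formal steps apply verbatim to $E=H\field$ at $p=2$, $n=1$ (complex orientable, $H^*(B\field)=\field[t]$ free over $\field$, K\"unneth holds, the $d=1$ map is injective), yet height--$1$ detection fails for this theory, because the nonzero class $ab(a+b)\in\field[a,b]=H^*(B\field^2)$ restricts to zero along all four homomorphisms $\field\rightarrow\field^2$. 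Since $H\field$ satisfies conditions (1) and (2) of \fullref{hyp:E-hkr}, and your first two steps invoke conditions (3) and (4) only through freeness (which holds trivially over the field $\field$), no argument of this shape can be correct: the finite-height input must be brought to bear on $E^*(BV)$ for every $V$, not only for $V=\field$.

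That is precisely what the proof the paper points to does: the paper's proof of \fullref{prop:hkr} is a citation of Proposition 3.6 of Green--Hunton--Schuster \cite{ghs}, whose argument tensors $E^*(BV)$, for \emph{arbitrary} $V$, with the Hopkins--Kuhn--Ravenel ring $L(E^*)$ and uses the isomorphism $L(E^*)\otimes_{E^*}E^*(BV)\cong \prod_{\hom(\field^n,V)}L(E^*)$ recalled in \fullref{sect:gen-char}, each component of which factors through the corresponding restriction $(B\phi)^*$; injectivity of $\delta_{E^*}$ then follows from the freeness of $E^*(BV)$ over $E^*$ (your Weierstrass/K\"unneth step, which is correct) together with the injectivity of $E^*\rightarrow L(E^*)$. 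Your third step is in substance the $V=\field$ shadow of this isomorphism, and the Vandermonde form of it is reasonable modulo the HKR input you acknowledge importing; but the $d$--variable statement cannot be recovered from the one-variable one by tensoring, for the reason above. A lesser point: your appeal to ``$p$ acts injectively on $E^*$'' is an extra assumption not granted by condition (3) of \fullref{hyp:E-hkr}, which asserts only $p^{-1}E^*\neq 0$; this is one reason the HKR framework works with faithful flatness of $L(E^*)$ over $p^{-1}E^*$ and freeness of $E^*(BV)$ rather than with naive inversion of $p$.
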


\begin{proof}
See the proof of \cite[Proposition 3.6]{ghs}.
\end{proof}

\begin{exam}
\label{exam-cohom-theories}
The following theories satisfy the height--$n$ detection property:
\begin{enumerate}
\item
the $I_n$--adically complete Johnson--Wilson theory  $\widehat{E(n)}$;
\item
Morava's integral lift of $n$--th Morava $K$--theory, which has coefficient ring $W (\field _{p^n}) [u, u ^{-1}]$, where $ W
(\field _{p^n})$ denotes the Witt vectors;
\item
the Lubin--Tate theory $E_n$ with coefficient ring $$E_{n*}\cong
W (\field _{p^n}) [[w_1, \ldots , w_{n-1}]][u, u ^{-1}].$$
\end{enumerate}
\end{exam}

The fundamental consequence of the height--$n$ detection property is
given by the following result, which is a formal consequence of the hypothesis.

\begin{prop}
\label{prop:n-detect}
Suppose  that the spectrum $E$ satisfies
the height--$n$ detection property. Then the map
$$
\delta_{[-,X]_E}\co [BV, X]_E 
\rightarrow 
[B\field ^n, X]_E ^{\hom(\field^n, V)} 
$$ 
is injective, for $V \in \fdvs$  an elementary abelian
$p$--group.
\end{prop}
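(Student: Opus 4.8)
The plan is to reduce the assertion directly to the height--$n$ detection property of $E^*$, which is the content of the hypothesis, by exploiting the description of $[BV,X]_E$ as a subset of $\hom_{E^*}(E^* (X), E^* (BV))$.

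First I would unwind the two $\delta$--maps involved. By definition a class $\xi \in [BV,X]_E$ is the image under $E^*$ of some $f\co BV \rightarrow X$, so it is an $E^*$--morphism $\xi = f^* \co E^* (X) \rightarrow E^* (BV)$. For a linear map $\phi\co \field^n \rightarrow V$ the induced $B\phi\co B\field^n \rightarrow BV$ acts on the functor $[-,X]_E$ by precomposition of representatives, and hence on the image by postcomposition with $(B\phi)^* \co E^* (BV) \rightarrow E^* (B\field^n)$; that is, the $\phi$--component of $\delta_{[-,X]_E}(\xi)$ is $(B\phi)^* \circ \xi$. Thus $\delta_{[-,X]_E}(\xi)$ is the tuple $\bigl((B\phi)^* \circ \xi\bigr)_{\phi}$ indexed by $\hom(\field^n, V)$.

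Next, to prove injectivity I would take $\xi_1, \xi_2 \in [BV,X]_E$ with $\delta_{[-,X]_E}(\xi_1) = \delta_{[-,X]_E}(\xi_2)$. Since $[B\field^n, X]_E$ is itself a subset of $\hom_{E^*}(E^* (X), E^* (B\field^n))$, the equality of tuples gives an equality of $E^*$--morphisms $(B\phi)^* \circ \xi_1 = (B\phi)^* \circ \xi_2$ for every $\phi$. Evaluating on an arbitrary element $a \in E^* (X)$ yields $(B\phi)^*(\xi_1 (a)) = (B\phi)^*(\xi_2 (a))$ for all $\phi$. The key observation is that this is precisely the statement $\delta_{E^*}(\xi_1 (a)) = \delta_{E^*}(\xi_2 (a))$, since by definition $\delta_{E^*}$ sends $x \in E^* (BV)$ to the tuple $\bigl((B\phi)^* x\bigr)_{\phi}$. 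Then I would invoke the height--$n$ detection property: $\delta_{E^*}$ is a monomorphism, so $\xi_1 (a) = \xi_2 (a)$ for every $a \in E^* (X)$, whence $\xi_1 = \xi_2$ as $E^*$--morphisms and therefore as elements of $[BV,X]_E$.

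The only point requiring care—and the step I would write out explicitly—is the identification in the first paragraph of the functorial action of $B\phi$ on the image $[BV,X]_E$ with postcomposition by $(B\phi)^*$, together with the resulting compatibility that exhibits $\delta_{[-,X]_E}$ as "$\delta_{E^*}$ applied pointwise" to the values of the morphisms. Once this bookkeeping is in place there is no real obstacle, and the statement follows formally from the hypothesis; this is why the result is described as a formal consequence of height--$n$ detection.
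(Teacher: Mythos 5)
Your proof is correct and is exactly the argument the paper intends: the paper's own proof is simply ``Straightforward,'' and your write-up supplies the expected formal unwinding, namely that $\delta_{[-,X]_E}$ is $\delta_{E^*}$ applied pointwise to the values of the representing $E^*$--morphisms, so injectivity follows at once from the height--$n$ detection property. The one bookkeeping step you rightly flag—that the functorial action of $B\phi$ on $[BV,X]_E$ is postcomposition with $(B\phi)^*$ on images of representatives—is precisely what makes the reduction legitimate, and your treatment of it is accurate.
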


\begin{proof}
Straightforward.
\end{proof}

\subsection[Combining the Hurewicz and the height--n detection
properties]{Combining the Hurewicz and the height--$n$ detection
properties}

The above results combine to give the following Corollary; 
there is an  analogous result when restricting to the even spaces of
the $\Omega$--spectrum.

\begin{cor}
\llabel{cor-hurewicz-height}
Suppose that $E$ satisfies the
height--$n$ detection property and that the Hurewicz hypothesis
$\hurewicz (B\field ^n)$ is satisfied. Then  the map
$$
Q_E \co 
[BV, X]_E 
\rightarrow 
[BV,X]_{H_E}
$$
is bijective. 
\end{cor}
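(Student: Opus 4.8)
The plan is to deduce this Corollary by combining the two isomorphism results established above into a single commutative diagram indexed over the homomorphisms $\hom(\field^n, V)$, and then to argue injectivity and surjectivity of $Q_E$ separately. The key observation is that both functors $[-, X]_E$ and $[-, X]_{H_E}$ admit the detection morphisms $\delta$ of the preceding Notation, and that the natural transformation $Q_E$ of \fullref{prop:QE} is compatible with these $\delta$ maps. So the first step is to write down the commutative square relating $\delta_{[-,X]_E}$ and $\delta_{[-,X]_{H_E}}$ via $Q_E$ on the source $[BV, X]$ and the iterated $Q_E$ on the target indexed by $\hom(\field^n, V)$.

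First I would handle injectivity of $Q_E$ on $[BV, X]_E$. Here I invoke \fullref{prop:n-detect}: since $E$ satisfies height--$n$ detection, the map $\delta_{[-,X]_E}$ is injective. I would then note that the corresponding map $\delta_{[-,X]_{H_E}}$ receives the image of $\delta_{[-,X]_E}$ under the product of the $Q_E$ maps over $\hom(\field^n,V)$. Crucially, each of these component $Q_E$ maps is an \emph{iso\-morphism} at $B\field^n$, because \fullref{prop:E-Hurewicz} applies with $Z = B\field^n$ once we know the Hurewicz hypothesis $\hurewicz(B\field^n)$, which is exactly the standing assumption of this Corollary. Thus on the target we have a bijection $[B\field^n, X]_E^{\hom(\field^n,V)} \cong [B\field^n, X]_{H_E}^{\hom(\field^n,V)}$. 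Chasing the square, injectivity of $\delta_{[-,X]_E}$ forces injectivity of $Q_E$ on $[BV,X]_E$: if two classes agree after applying $Q_E$, then their images under $\delta_{[-,X]_{H_E}}$ agree, hence (via the target bijection) their images under $\delta_{[-,X]_E}$ agree, so by \fullref{prop:n-detect} they coincide.

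For surjectivity, I would observe that $Q_E$ is already surjective by construction, as recorded in \fullref{prop:QE}. This is the easy half and needs no hypothesis. Combining the two halves gives that $Q_E$ is bijective, as claimed.

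The main obstacle I anticipate is the naturality bookkeeping for the detection morphisms: one must verify that $Q_E$ genuinely commutes with the adjoint-of-evaluation maps $\delta$, i.e.\ that applying $Q_E$ and then $\delta_{[-,X]_{H_E}}$ equals applying $\delta_{[-,X]_E}$ and then the product of $Q_E$'s over $\hom(\field^n, V)$. This follows formally from the fact that $Q_E$ is a natural transformation of functors $\topspace\op \times \topspace \rightarrow \set$ (\fullref{prop:QE functorial}) together with the definition of $\delta$ as the adjoint of an evaluation pairing, but it is the one place where the argument could go wrong if $Q_E$ failed to respect the evaluation structure. Once that compatibility is in place, the diagram chase is entirely formal and the only inputs are \fullref{prop:n-detect} (for the injectivity of $\delta_{[-,X]_E}$) and \fullref{prop:E-Hurewicz} applied at $B\field^n$ (to identify the target via $\hurewicz(B\field^n)$).
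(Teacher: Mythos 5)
Your proposal is correct and follows essentially the same route as the paper: the same commutative square relating $\delta_{[-,X]_E}$ and $\delta_{[-,X]_{H_E}}$ via $Q_E$, with surjectivity coming from \fullref{prop:QE}, injectivity of the top map from \fullref{prop:n-detect}, and bijectivity of the right-hand vertical map from \fullref{prop:E-Hurewicz} applied at $Z = B\field^n$ under the hypothesis $\hurewicz(B\field^n)$. Your element-wise chase is just a spelled-out version of the paper's observation that the composite $\delta_{[-,X]_{H_E}} \circ Q_E$ is injective, and the naturality of $Q_E$ with respect to the evaluation maps, which you rightly flag, is exactly what the paper takes for granted in asserting the diagram commutes.
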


\begin{proof}
The map $Q_E$ is surjective by \fullref{prop:QE}, thus 
we must show that  $Q_E$ is injective; hence it is
sufficient to show that the composite 
$$
[BV, X] _E 
\stackrel{Q_E}{\longrightarrow}
[BV, X]_{H_E} 
\stackrel{\delta_{[-, X]_{H_E}}}{\longrightarrow}
[B\field ^n, X]_{H_E} ^{\hom(\field^n, V)} 
$$
is injective.

Consider the commutative diagram
$$
\xymatrix{
[BV, X] _E 
\ar[d]_{Q_E}
\ar@{^(->}[rr]^(.43){\delta_{[-, X]_{E}}}
&&
[B\field ^n, X]_E ^{\hom(\field^n, V)} 
\ar[d]^{(Q_E)^{\hom(\field^n, V)}} 
\\
[BV, X]_{H_E} 
\ar[rr]_(.43){\delta_{[-, X]_{H_E}}}
&&
[B\field ^n, X]_{H_E} ^{\hom(\field^n, V)}.
}
$$
The right hand vertical arrow is a bijection by  \fullref{prop:E-Hurewicz}, and the top map is injective by 
\fullref{prop:n-detect}. The result follows.
\end{proof}



\section{Filtrations of categories of presheaves}
\label{sect-filt}

This section considers natural ways of defining filtrations on certain
functor categories. This is presented for general categories of
presheaves  and is then specialized to the cases of interest, namely
certain natural filtrations of the functor category $\g$. The
definitions of this section are motivated by the height--$n$ detection
property introduced in \fullref{subsect:height-n}.

When applied to unstable algebras, the constructions introduce a
family of functors $\alpha \restrict \co \unstalg \rightarrow \unstalg$,
for $n \in \mathbb{N} \cup \{\infty \}$. These functors take values in
nilclosed unstable algebras; they are defined in
\fullref{subsect:alphan} and their basic properties are established
in \fullref{prop:alphan} and \fullref{thm:tdeg-alphan}.

\subsection{Generalities for presheaves}
Throughout this section, let $\calc$ denote an (essentially) small category. 

\begin{nota}$\phantom{9}$
\begin{enumerate}
\item
Let $\presh$ denote the full subcategory $\mathrm{Funct}(\calc^{\mathrm{op}}, \set)$ of \break presheaves.
\item
Let $\pfpresh$ denote the category of presheaves with values in the category $\pfset$ of profinite sets. 
\end{enumerate}
\end{nota}

For clarity of presentation, the following definitions are considered only for the category of presheaves $\presh$. 

\begin{defn}
Let $\cald \hookrightarrow \calc$ be a subcategory and let $F \in \presh$ be a presheaf.
\begin{enumerate}
\item
For an object $C \in \calc$, let $\sim _\cald$ denote the equivalence
relation on $F(C)$ defined by $x \sim _\cald y $ if
and only if $j ^* (x) = j^* (y)$  for each morphism $j \co D \rightarrow C$, where $D\in \cald$. 
\item
Let $\beta_\cald F (C) $ denote the quotient set $F(C) / {\sim_\cald}$.
\end{enumerate}
\end{defn}

\begin{lem} $\phantom{9}$
 \begin{enumerate}
\item
The association $C \mapsto \beta_\cald F(C)$ defines a presheaf
$\beta_\cald F$. 
\item
The association 
$\beta_\cald$ defines a functor $\beta_\cald \co  \presh \rightarrow \presh$.
\item
There is a natural surjection $1 _{\presh} \rightarrow \beta_\cald$. 
\item
For $D$ an object of $\cald$, $\beta_\cald F (D) = F(D)$. 
\item
The functor $\beta_\cald$ depends only upon the full subcategory generated by $\cald$. 
\end{enumerate}
\end{lem}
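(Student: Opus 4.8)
The plan is to verify each assertion by direct appeal to the defining equivalence relation $\sim_\cald$, the only substantive observation being that the family of test morphisms $j\co D \to C$ with $D$ an object of $\cald$ is closed under postcomposition with arbitrary morphisms of $\calc$. I would treat parts (1)--(3) first, since they all rest on this closure property, and then dispose of (4) and (5) by elementary remarks.

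For part (1), the presheaf structure on $\beta_\cald F$ must be defined on morphisms. Given $g \co C' \to C$ in $\calc$, I would show that $F(g) \co F(C) \to F(C')$ descends to the quotients. If $x \sim_\cald y$ in $F(C)$ and $k \co D' \to C'$ is any morphism with $D' \in \cald$, then the composite $g \circ k \co D' \to C$ again has source in $\cald$, and contravariance of $F$ gives $k^* \circ g^* = (g \circ k)^*$; hence $k^*(g^* x) = (g\circ k)^* x = (g \circ k)^* y = k^*(g^* y)$, so $g^* x \sim_\cald g^* y$. The induced map $\beta_\cald F(g)$ is thus well defined, and the presheaf axioms for $\beta_\cald F$ are inherited from those of $F$ because the quotient maps $F(C) \twoheadrightarrow \beta_\cald F(C)$ are compatible with restriction by construction. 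For part (2), a natural transformation $\phi \co F \to G$ induces $\beta_\cald \phi$ by the same descent argument: naturality of $\phi$ gives $j^* \circ \phi_C = \phi_D \circ j^*$ for each $j \co D \to C$, so $x \sim_\cald y$ forces $\phi_C(x) \sim_\cald \phi_C(y)$; functoriality of $\beta_\cald$ is then immediate. Part (3) follows at once, since the quotient maps $F(C) \twoheadrightarrow \beta_\cald F(C)$ assemble, by part (1), into a morphism of presheaves $1_\presh \to \beta_\cald$ which is objectwise surjective and natural in $F$.

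For part (4), I would observe that when $D$ is an object of $\cald$ the identity $\mathrm{id}_D \co D \to D$ is itself an admissible test morphism, so $x \sim_\cald y$ in $F(D)$ already implies $x = \mathrm{id}_D^*(x) = \mathrm{id}_D^*(y) = y$; hence $\sim_\cald$ is trivial on $F(D)$ and $\beta_\cald F(D) = F(D)$. Part (5) is the point to state carefully: in the definition of $\sim_\cald$ the morphisms $j \co D \to C$ range over morphisms of $\calc$ whose source is an object of $\cald$, and the relation never refers to the morphisms internal to $\cald$. Consequently $\sim_\cald$ depends only on the class of objects of $\cald$, so it coincides with $\sim_{\cald'}$ for $\cald'$ the full subcategory of $\calc$ on the same objects, whence $\beta_\cald = \beta_{\cald'}$. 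None of the steps presents a genuine obstacle; the only places demanding attention are the correct reading of the quantifier in the definition, which is what drives part (5), together with the closure-under-postcomposition observation underlying part (1).
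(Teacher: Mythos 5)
Your proposal is correct, and since the paper's own proof is simply ``Straightforward,'' your argument is exactly the routine verification the author had in mind: the key observation that test morphisms $j\co D \to C$ with $D \in \cald$ are closed under postcomposition by morphisms of $\calc$ handles (1)--(3), the identity of $D$ handles (4), and the correct reading of the quantifier (test morphisms range over morphisms of $\calc$ with source an object of $\cald$, never using morphisms internal to $\cald$) handles (5). Nothing is missing.
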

  
\begin{proof}
Straightforward.
\end{proof}

The following result has important consequences:

\begin{prop}
Let $\cald \hookrightarrow \calc$ be a full small subcategory, and let $\beta_\cald \co  \presh
\rightarrow \presh$ denote the induced functor. Then
\begin{enumerate}
\item
$\beta_\cald$ preserves surjections;
\item
$\beta_\cald$ preserves injections;
\item 
$\beta_\cald$ commutes with finite products: namely, for presheaves  $F, G \in \presh$, there is a canonical isomorphism $(\beta_\cald F) \times (\beta_\cald G) \cong \beta_\cald (F \times G)$. 
\end{enumerate}
\end{prop}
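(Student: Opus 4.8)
The plan is to establish the three properties directly from the explicit description of $\beta_\cald$ as a quotient by the equivalence relation $\sim_\cald$, using crucially that $\cald$ is now assumed to be a \emph{full} subcategory. The key observation is that for a full subcategory, the relation $x \sim_\cald y$ at an object $C$ compares the restrictions $j^*(x)$ and $j^*(y)$ over \emph{all} morphisms $j\co D \to C$ with $D \in \cald$, and fullness guarantees that these restriction values can themselves be compared by further restriction to objects of $\cald$ in a coherent way. I expect each of the three parts to reduce to a routine verification once the right formulation is in place, with the product statement being the most delicate.

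First I would treat surjectivity. Given a surjection $\phi\co F \twoheadrightarrow G$ in $\presh$, I would show that the induced map $\beta_\cald F(C) \to \beta_\cald G(C)$ is surjective for each $C$. Surjectivity of $\phi(C)$ is immediate, and passing to the quotient preserves surjectivity, so this part is essentially formal and does not even require fullness. For injectivity, suppose $\phi\co F \hookrightarrow G$ is injective and that two classes $[x],[y] \in \beta_\cald F(C)$ map to the same class in $\beta_\cald G(C)$; then $j^*\phi(x) = j^*\phi(y)$ for all $j\co D \to C$ with $D \in \cald$. Since $\phi$ is a natural monomorphism, $j^*\phi = \phi\, j^*$, and injectivity of $\phi(D)$ forces $j^*(x) = j^*(y)$ for all such $j$, whence $x \sim_\cald y$, i.e.\ $[x] = [y]$. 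This shows $\beta_\cald F(C) \to \beta_\cald G(C)$ is injective.

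For the product statement, the plan is to construct the canonical comparison map and show it is a bijection at each object $C$. There is always a natural map $\beta_\cald(F \times G) \to (\beta_\cald F) \times (\beta_\cald G)$ induced by the two projections, so the content is that it is an isomorphism. The map sends $[(x,y)]$ to $([x],[y])$; to see it is well defined and injective, observe that $(x,y) \sim_\cald (x',y')$ at $C$ if and only if $j^*(x,y) = j^*(x',y')$ for all $j\co D \to C$, and since restriction along $j$ commutes with the product decomposition, this holds if and only if $j^*(x) = j^*(x')$ and $j^*(y) = j^*(y')$ for all such $j$, that is, $x \sim_\cald x'$ and $y \sim_\cald y'$. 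Surjectivity is clear since the map is induced by the surjection $F(C) \times G(C) \twoheadrightarrow \beta_\cald F(C) \times \beta_\cald G(C)$ factoring through the quotient.

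The main obstacle, and the reason fullness is invoked in the hypothesis, is ensuring that the equivalence relation $\sim_\cald$ behaves well under the naturality needed in the injectivity and product arguments: one must know that restricting a class along a morphism $C' \to C$ is compatible with $\sim_\cald$, so that $\beta_\cald$ is genuinely a subquotient functor rather than merely a pointwise construction. I would verify that fullness makes the relation $\sim_\cald$ on $F(C)$ the pullback of equality on the objects of $\cald$, so that the quotient is computed compatibly across the whole category; once this compatibility is recorded, all three parts follow from the elementary set-theoretic facts that quotients preserve surjections, that monomorphisms descend to injective maps on these particular quotients, and that the defining relation splits over finite products. The verification of this compatibility is the only place where the argument is not purely formal, and it is where I would concentrate the care.
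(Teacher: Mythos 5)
Your proof is correct and takes essentially the same route as the paper's: surjectivity is formal from the quotient description, injectivity follows from naturality ($f^*\gamma_C = \gamma_D f^*$) together with injectivity of $\gamma$ at objects of $\cald$, and the product statement follows from the observation that $(x_1,y_1)\sim_\cald (x_2,y_2)$ in $(F\times G)(C)$ if and only if $x_1\sim_\cald x_2$ and $y_1\sim_\cald y_2$. One harmless misconception in your closing paragraph: fullness is not what makes $\sim_\cald$ compatible with restriction --- since the relation at $C$ quantifies over all morphisms $j\co D\to C$ in $\calc$ with $D$ an object of $\cald$, compatibility along any $g\co C'\to C$ is automatic by composing to get $g\circ j\co D\to C$, and the paper's preceding lemma already records that $\beta_\cald$ depends only on the full subcategory generated by $\cald$; none of your three arguments actually invokes fullness, so nothing is at stake.
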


\begin{proof}
The first statement is a consequence of the definition of
$\beta_\cald$ as a natural quotient. The second statement is
essentially formal: suppose that $\gamma\co  F  \hookrightarrow G$ is an
inclusion of presheaves and let $x, y \in F(C) $ represent distinct
elements in $\beta_\cald F$. It is necessary to show that $\gamma_C x$ and $\gamma_C y$ represent distinct elements in $\beta_\cald G(C)$. By definition, there exists a morphism $f\co  D \rightarrow C$ with $D \in \cald$ for which $f^* (x) \neq f^* (y)$ in $F(D)$. Hence, the images $\gamma_D f^* (x)$ and $\gamma_D f^* (y)$ are distinct in $G (D)$, since $\gamma_D$ is an injection. Functoriality gives $\gamma_D f^* = f^* \gamma_C$, so the result follows. 

Let $F, G  \in \presh$ be presheaves and let $C$ be an object in $\calc$. The result follows from the observation that $(x_1, y_1) \sim_\cald (x_2, y_2)$ in $(F\times G)(C)$ if and only if $x_1 \sim_\cald x_2$ and $y_1 \sim_\cald y_2$.
\end{proof}

\subsection[A fundamental property of the constructed induced functor]{A fundamental property of the construction $\beta _\cald$}

The notation of the previous section is maintained.

\begin{prop}
\label{prop:fund-beta}
Let $\nu\co  F \rightarrow G$ be a morphism of presheaves in $\presh$
which factors as $\nu_\cald \co  \beta _\cald F \rightarrow G$. Then
$\nu_\cald$ is a monomorphism of presheaves if and only if the
restriction $F|_\cald \rightarrow G|_\cald$ is a monomorphism. 
\end{prop}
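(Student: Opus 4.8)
The plan is to exploit the fundamental property recorded in the preceding lemma, namely that $\beta_\cald F (D) = F(D)$ whenever $D$ is an object of $\cald$, together with the natural surjection $\eta\co F \rightarrow \beta_\cald F$ and the standard fact that monomorphisms in the functor category $\presh$ are detected objectwise (a morphism of presheaves is a monomorphism precisely when each of its components is injective). By hypothesis $\nu = \nu_\cald \circ \eta$. The key preliminary observation is that for $D \in \cald$ the component $\eta_D\co F(D) \rightarrow \beta_\cald F(D) = F(D)$ is the identity, so that $\nu_D = (\nu_\cald)_D$ on every object of $\cald$; consequently the restriction of $\nu_\cald$ to $\cald$ coincides with the restriction $\nu|_\cald$, which is the map $F|_\cald \rightarrow G|_\cald$ appearing in the statement.

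For the forward implication, suppose $\nu_\cald$ is a monomorphism, so $(\nu_\cald)_D$ is injective for every object $D$, in particular for $D \in \cald$. By the observation above this says exactly that $\nu_D$ is injective for all $D \in \cald$, whence $F|_\cald \rightarrow G|_\cald$ is objectwise injective and therefore a monomorphism. This direction is immediate.

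The substance lies in the converse. Assume $F|_\cald \rightarrow G|_\cald$ is a monomorphism, so $\nu_D\co F(D) \rightarrow G(D)$ is injective for each $D \in \cald$. Fix $C \in \calc$ and elements $\bar x, \bar y \in \beta_\cald F(C)$ with $(\nu_\cald)_C (\bar x) = (\nu_\cald)_C(\bar y)$; lift them to $x, y \in F(C)$. Then $\nu_C(x) = (\nu_\cald)_C \eta_C (x) = (\nu_\cald)_C(\bar x)$, and similarly for $y$, so $\nu_C(x) = \nu_C(y)$ in $G(C)$. To conclude that $\bar x = \bar y$ it suffices to show $x \sim_\cald y$, that is, $j^*(x) = j^*(y)$ for every morphism $j\co D \rightarrow C$ with $D \in \cald$. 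For any such $j$, naturality of $\nu$ gives $\nu_D j^*(x) = j^* \nu_C(x) = j^* \nu_C(y) = \nu_D j^*(y)$, and injectivity of $\nu_D$ then forces $j^*(x) = j^*(y)$. Since $j$ was arbitrary, $x \sim_\cald y$, so $(\nu_\cald)_C$ is injective; as $C$ was arbitrary, $\nu_\cald$ is a monomorphism.

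I expect no serious obstacle: the whole argument is formal, driven by the defining relation $\sim_\cald$ and the naturality square of $\nu$. The only point requiring care is the converse, where the chosen test morphism $j\co D \rightarrow C$ must be pushed through naturality in order to reduce the desired equality $j^*(x) = j^*(y)$ to the injectivity hypothesis over $\cald$. This is precisely the mechanism by which the relation $\sim_\cald$, which remembers only restrictions to objects of $\cald$, interacts with the monomorphism condition; everything else is bookkeeping.
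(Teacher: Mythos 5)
Your proof is correct and takes essentially the same approach as the paper: the heart of both arguments is pushing a test morphism $j\co D \rightarrow C$ with $D$ in $\cald$ through the naturality square of $\nu$, so that the defining relation $\sim_\cald$ reduces to injectivity of the components over $\cald$. The only difference is presentational --- the paper proves the converse by contraposition and dismisses the forward implication as straightforward, whereas you argue the converse directly and spell out the forward direction via the identification $\beta_\cald F(D)=F(D)$ for $D$ in $\cald$.
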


\begin{proof}
The forward implication is straightforward, hence consider the
converse. 
If $\nu_\cald \co  \beta_\cald F \rightarrow G$ is not a
monomorphism, then there exist sections $x, y \in F (C)$, for some
object $C$ of $\calc$ such that $x$ and $y$ represent distinct elements in
$\beta_\cald F (C) $ and $\nu (x) = \nu (y)$. The first condition implies
that there exists a morphism $j\co  D \rightarrow C$ in $\calc$, where
$D$ is an object of $\cald$, such that $j^* x \neq j^* y$ in $F (D) =
\beta _\cald F (D)$. It follows that the morphism $F (D) \rightarrow G
(D)$ is not a monomorphism, by naturality,  which implies the required result. 
\end{proof}

\subsection{Presheaves of profinite sets}

The general  application of these constructions to unstable modules
requires the consideration of presheaves with values in profinite
sets. (For the applications of this paper, it would be sufficient to
restrict to presheaves with values in finite sets).

\begin{prop}
The functor $\beta _\cald \co  \presh \rightarrow \presh$ induces a functor on the category of presheaves with values in profinite sets, $\beta_\cald \co  \pfpresh \rightarrow \pfpresh$. 
\end{prop}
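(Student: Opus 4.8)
The plan is to lift the set-level construction by equipping each value $\beta_\cald F(C)$ with a canonical profinite topology, realizing it as a closed subspace of a profinite set, and then to check that the presheaf structure maps remain continuous. Fix a profinite presheaf $F \in \pfpresh$. Since $\calc$ is essentially small and $\cald$ is small, for each object $C$ the morphisms $j \co D \rightarrow C$ with $D \in \cald$ form a set, so the relevant products below are genuine profinite sets rather than proper classes.

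First I would introduce, for each object $C$, the evaluation map
\[
\Phi_C \co F(C) \longrightarrow \prod_{j\co D \to C} F(D), \qquad x \mapsto (j^* x)_j ,
\]
the product ranging over all morphisms $j$ with source in $\cald$. Each component $x \mapsto j^* x$ is a restriction map of the profinite presheaf $F$, hence continuous, so $\Phi_C$ is continuous; and an arbitrary product of profinite sets is profinite, so the target is profinite. By the very definition of $\sim_\cald$, one has $x \sim_\cald y$ if and only if $\Phi_C(x) = \Phi_C(y)$, so $\Phi_C$ induces a natural bijection $\beta_\cald F(C) \cong \Phi_C(F(C))$. The image $\Phi_C(F(C))$ is the continuous image of the compact set $F(C)$, hence compact; being a compact subset of a Hausdorff space it is closed, and a closed subspace of a profinite set is profinite. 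Transporting the topology along the bijection endows $\beta_\cald F(C)$ with a profinite structure, and the canonical surjection $F(C) \twoheadrightarrow \beta_\cald F(C)$ then becomes a continuous map from a compact space to a Hausdorff space, hence a quotient map.

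It then remains to verify the presheaf axioms topologically. For a morphism $g \co C' \rightarrow C$ in $\calc$, the set-level functoriality of $\beta_\cald$ established above shows that $g^*$ descends to the quotients; to see that the induced map $\beta_\cald F(C) \rightarrow \beta_\cald F(C')$ is continuous I would invoke the universal property of the quotient map $F(C) \twoheadrightarrow \beta_\cald F(C)$, noting that the composite $F(C) \stackrel{g^*}{\longrightarrow} F(C') \twoheadrightarrow \beta_\cald F(C')$ is continuous. Thus $\beta_\cald F$ is a profinite presheaf. Applying the identical universal-property argument to a levelwise continuous morphism $\gamma \co F \rightarrow G$ shows that $\beta_\cald \gamma$ is again levelwise continuous, which yields the desired functor $\beta_\cald \co \pfpresh \rightarrow \pfpresh$ and compatibility with the forgetful functor to $\presh$.

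The main obstacle is the profiniteness of the quotient: the quotient of a profinite set by an arbitrary equivalence relation need not be profinite, since the quotient topology can fail to be Hausdorff. The crucial structural point, which resolves this, is that $\sim_\cald$ is precisely the kernel pair of the continuous map $\Phi_C$ into a profinite product; this forces the relation to be closed and exhibits the quotient as the image of $\Phi_C$, which is where compactness of $F(C)$ and the Hausdorff property of the target do the essential work.
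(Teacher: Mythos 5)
Your proof is correct, and it supplies precisely the details that the paper omits (its proof of this proposition is simply ``Straightforward''). Your key observation --- that $\sim_\cald$ is the kernel pair of the continuous map $\Phi_C$ into a profinite product, so that $\beta_\cald F(C)$ inherits a profinite topology as the closed image of a compact set in a Hausdorff (indeed totally disconnected) space, rather than merely as an abstract quotient --- is exactly the point that makes the claim legitimate, since quotients of profinite sets by arbitrary closed equivalence relations can fail to be totally disconnected.
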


\begin{proof}
Straightforward.
\end{proof}

\subsection{Presheaves on finite-dimensional vector spaces}
The motivating example is the category of presheaves of profinite sets
on the category of finite-dimensional vector spaces over the prime
field $\field$.

\begin{nota}
 For $n$ a nonnegative integer or $\infty$, let $\fdvs \restrict$
 denote the full subcategory of $\fdvs$ with set of objects $\{ \field ^t\, |\, t \leq n \}$.
\end{nota}

Recall that $\g$ denotes the category of functors from $\fdvs$ to
$\pfset \op$, so that $\g \op$ is equivalent to a category of
presheaves on $\fdvs$ with values in profinite sets. The general definitions for presheaves yield the following functors, in which attention must be paid to the  variance.

\begin{defn}
For $n$ a nonnegative  integer or $\infty$, let $\beta \restrict \co  \g \op  \rightarrow \g \op$ denote the functor
$\beta_{\fdvs \restrict}$ defined on the category of presheaves on
$\fdvs$ with values in profinite sets.
\end{defn}

By construction,  there is a canonical surjection
in $\g \op$,   $G \twoheadrightarrow \beta \restrict G$ for each $G \in \g$.

\begin{prop}
\label{prop:beta-prop}
For $n$ a nonnegative  integer:
\begin{enumerate}
\item
the functor $\beta \restrict[\infty]$ is equivalent to the identity
functor; 
\item 
there is a natural surjection 
 $ \beta \restrict[n+1]  \twoheadrightarrow \beta \restrict[n]$ of
 endofunctors on $\g \op$ which occurs in the diagram of natural
 transformations 
$$
\xymatrix{
1= \beta \restrict[\infty]
\ar[r]
\ar[rd]
&
\beta \restrict[n+1]
\ar[d]
\\
&
\beta \restrict.
}
$$
\end{enumerate}
\end{prop}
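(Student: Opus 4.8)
The plan is to deduce both parts from a single observation about the defining equivalence relation $\sim_\cald$: for nested full subcategories $\cald$, this relation becomes \emph{finer} as $\cald$ grows. Throughout I regard $\g\op$ as a category of presheaves of profinite sets on $\fdvs$, so that $\beta\restrict$ is literally the construction $\beta_{\fdvs\restrict}$, and I use the nesting $\fdvs\restrict \subseteq \fdvs\restrict[n+1] \subseteq \fdvs\restrict[\infty] = \fdvs$ of full subcategories, whose object sets are $\{\field^t \mid t\leq n\}$, $\{\field^t\mid t\leq n+1\}$, and all of $\{\field^t\}$ respectively.

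For part (1), I would show $\beta\restrict[\infty] \cong 1$ directly from the definition of $\sim_{\fdvs}$. Given a presheaf $F$ and an object $V\in\fdvs$, choose an isomorphism $V\cong \field^d$. Since $\field^d$ is an object of $\fdvs\restrict[\infty]$ and $F$ carries isomorphisms to bijections, the restriction $F(V)\rightarrow F(\field^d)$ is injective. Hence $x \sim_{\fdvs} y$ in $F(V)$ forces $x=y$, the relation $\sim_{\fdvs}$ is trivial, and the canonical surjection $F(V)\rightarrow \beta\restrict[\infty]F(V)$ of the preceding Lemma is a bijection. This identification is natural in both $F$ and $V$, which yields the required natural isomorphism $\beta\restrict[\infty]\cong 1_{\g\op}$.

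For part (2), the key point is that for full subcategories $\cald_1\subseteq\cald_2$ of $\calc$ the relation $\sim_{\cald_2}$ refines $\sim_{\cald_1}$: the condition defining $x\sim_{\cald_2}y$ quantifies over all $j\co D\rightarrow C$ with $D\in\cald_2$, hence over a superset of those with $D\in\cald_1$, so $x\sim_{\cald_2}y$ implies $x\sim_{\cald_1}y$. This produces a canonical quotient map $\beta_{\cald_2}F(C)=F(C)/{\sim_{\cald_2}}\twoheadrightarrow F(C)/{\sim_{\cald_1}}=\beta_{\cald_1}F(C)$, natural in $F$ and $C$. Taking $\cald_2=\fdvs\restrict[n+1]$ and $\cald_1=\fdvs\restrict$ gives the natural surjection $\beta\restrict[n+1]\twoheadrightarrow\beta\restrict$. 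The same principle identifies the top arrow $1=\beta\restrict[\infty]\rightarrow\beta\restrict[n+1]$ and the diagonal $1\rightarrow\beta\restrict$ with the canonical surjections of the Lemma. Commutativity of the triangle is then immediate, since all three maps are the canonical quotients of $F(C)$ by the progressively coarser relations $\sim_{\langle\infty\rangle}\subseteq\sim_{\langle n+1\rangle}\subseteq\sim_{\langle n\rangle}$, and a composite of two such quotients equals the quotient by the coarser relation.

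The arguments are formal once the variance is pinned down, so I do not expect a serious obstacle; the only genuine care needed is twofold. In part (1), the passage to an isomorphic representative $\field^d$ must be justified, which is exactly where functoriality of $F$ on isomorphisms is invoked. In part (2), one must track the direction of the surjections after passing between $\g$ and presheaves on $\fdvs$, so that the arrows in the stated diagram point as claimed. The profinite structure introduces no new difficulty, since each $\beta_\cald F$ is already a presheaf of profinite sets by the earlier Proposition and the comparison maps are continuous as quotients of profinite sets.
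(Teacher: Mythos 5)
Your proof is correct, and its overall architecture matches the paper's: part (1) by observing that the relation $\sim_{\fdvs\restrict[\infty]}$ is trivial, and part (2) by showing that the canonical surjection $1 \twoheadrightarrow \beta\restrict$ factors through $1 \twoheadrightarrow \beta\restrict[n+1]$, i.e.\ that $\sim_{\langle n+1\rangle}$ refines $\sim_{\langle n\rangle}$. The difference is in how the refinement is established, and it is a genuine one. You use the nesting of full subcategories $\fdvs\restrict \subseteq \fdvs\restrict[n+1]$: the class of test morphisms $j\co \field^t \rightarrow V$ with $t \leq n$ defining $\sim_{\langle n\rangle}$ is contained in the class defining $\sim_{\langle n+1\rangle}$, so the implication is immediate and purely formal --- it works for any nested pair of subcategories of any small $\calc$, with no special properties of $\fdvs$ invoked. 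The paper instead argues that every morphism $k\co \field^n \rightarrow V$ factors as $\field^n \rightarrow \field^{n+1} \stackrel{j}{\rightarrow} V$, so that agreement of pullbacks along morphisms from $\field^{n+1}$ \emph{alone} already forces agreement along morphisms from $\field^n$. That argument exploits a specific property of $\fdvs$ (extendability of linear maps) and proves something marginally stronger: the relation $\sim_{\langle n+1\rangle}$ is detected by the single top object $\field^{n+1}$, which is exactly what one would need if $\fdvs\restrict[n+1]$ had been defined with one object only. Given the paper's actual definition of $\fdvs\restrict$ (objects $\{\field^t \mid t \leq n\}$), your more elementary route is entirely sufficient, and your treatment of part (1) --- handling a general $V$ via a chosen isomorphism $V \cong \field^d$, since $\fdvs\restrict[\infty]$ is only a skeleton of $\fdvs$ --- is more careful than the paper's ``the first statement is clear''; your parenthetical identification $\fdvs\restrict[\infty] = \fdvs$ is a harmless abuse precisely because that argument does not rely on it.
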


\begin{proof}
The first statement is clear; to prove the second, it is sufficient to
show that the natural transformation $1 \rightarrow \beta \restrict$
factorizes naturally across $\beta \restrict[n+1]$. 

Let $G$ be an object of $\g \op$ and consider sections $s_0, s_1 \in G
(V)$ such that, for any morphism $j\co  \field^{n+1} \rightarrow V$, the
sections $j^* s_0$ and $j^*s_1$ are equal in $G (\field^{n+1})$. Consider
a morphism $k\co  \field ^n \rightarrow V$; there exists a factorization
$\field^n \rightarrow \field^{n+1} \stackrel{j}{\rightarrow} V$ across
a morphism $j$. It follows that $k^* s_0 = k^* s_1$, which  implies that the morphism $1 \twoheadrightarrow \beta \restrict$ factors across $1\twoheadrightarrow \beta \restrict[n+1]$, as required.
\end{proof}

\begin{exam}
\llabel{exam-alpha-BV}
Let $\hom (-,W)$ denote the object of $\g \op$ given by $V
\mapsto \hom(V, W)$ (forgetting the abelian structure) and let $n$ be
a nonnegative integer. Linearity implies that $\beta \restrict \hom
 (-, W) \cong \hom (-,W)$, so that the  filtration of $\hom (- ,W)$
 provided by $\beta \restrict $ as $n$ varies is constant.
\end{exam}

\begin{prop}
\label{prop:beta-analytic}
For $n$  a nonnegative integer or $\infty$,  the functor $\beta \restrict $
restricts to  a functor $\beta \restrict \co  \g_\omega \op \rightarrow
\g_\omega \op$.
\end{prop}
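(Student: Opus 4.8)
The plan is to verify that $\beta\restrict$ sends analytic objects of $\g\op$ to analytic objects; since $\g_\omega\op$ is by definition the full subcategory of analytic objects, this is exactly what is required. The case $n=\infty$ is immediate, because $\beta\restrict[\infty]$ is the identity functor by \fullref{prop:beta-prop}, so I would treat only finite $n$. By the definition of analyticity in $\g$, an object $H$ of $\g\op$ is analytic if and only if $\hom_{\pfset\op}(\field, H)$ is analytic in $\f$; the whole argument therefore reduces to controlling $\hom_{\pfset\op}(\field, \beta\restrict G)$ for an analytic $G$, and comparing it with $\hom_{\pfset\op}(\field, G)$.

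First I would use the canonical surjection $G \twoheadrightarrow \beta\restrict G$ in $\g\op$ provided by the construction of $\beta\restrict$. Read as a morphism $\beta\restrict G \rightarrow G$ in $\g$, it may be fed to the functor $\hom_{\pfset\op}(\field, -)\co \g \rightarrow \f$ occurring in the commutative square of \fullref{Sec-unstable}. Evaluated at an object $V \in \fdvs$, the induced map is the precomposition map $\mapc(\beta\restrict G(V), \field) \rightarrow \mapc(G(V), \field)$ along the continuous surjection of profinite sets $G(V) \twoheadrightarrow \beta\restrict G(V)$. Since precomposition with a surjection is injective, the transformation $\hom_{\pfset\op}(\field, \beta\restrict G) \rightarrow \hom_{\pfset\op}(\field, G)$ is levelwise injective, hence a monomorphism in $\f$.

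It then remains to observe that the subcategory $\f_\omega$ of analytic functors is closed under subobjects \cite{hls,s}: a finitely generated subfunctor of an analytic functor is finite, so any subfunctor of an analytic functor is again the colimit of its finite subobjects. Applying this closure property to the monomorphism constructed above — together with the analyticity of $\hom_{\pfset\op}(\field, G)$, which holds because $G$ is analytic — shows that $\hom_{\pfset\op}(\field, \beta\restrict G)$ is analytic in $\f$, and hence that $\beta\restrict G$ is analytic in $\g\op$. The one point demanding care is the variance bookkeeping: the canonical epimorphism lives in $\g\op$ and yields a monomorphism in $\f$ only after passing through the contravariant behaviour of $\mapc(-, \field)$ on surjections of profinite sets. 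The remaining ingredient, closure of $\f_\omega$ under subobjects, is a standard structural feature of the Henn--Lannes--Schwartz theory and is the natural candidate for the main (if modest) obstacle.
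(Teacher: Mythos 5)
Your argument is correct and is essentially the paper's own proof: the canonical surjection $G \twoheadrightarrow \beta\restrict G$ in $\g\op$ induces, under $\mapc(-,\field)$, a monomorphism in $\f$, exhibiting $\mapc(\beta\restrict G,\field)$ as a subobject of an analytic functor and hence analytic. The only difference is expository --- you make explicit the variance bookkeeping and the closure of $\f_\omega$ under subobjects, both of which the paper leaves implicit.
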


\begin{proof}
Consider an object $G \in \g \op$ which corresponds to an analytic
functor. 
The canonical surjection $G \twoheadrightarrow \beta \restrict G$
induces a monomorphism in $\f$ under the functor $\mapc (-,
\field)$. This identifies $\mapc (\beta \restrict G, \field)$ as a
subobject of an analytic functor by the hypothesis on $G$, hence
$\beta \restrict G$ is analytic, as required.
\end{proof}

\subsection[Functors induced from finite-dimensional vector spaces over finite fields]{Functors induced from $\fdvs \restrict[d]$}
The theory of Henn, Lannes and Schwartz \cite{hls} provides an elegant treatment of unstable
algebras of transcendence degree $d$,  for $d$ a nonnegative integer. This section shows that the
functors $\beta \restrict$ restrict well to this theory. 

\begin{nota}
For $d$ a nonnegative integer, let $\g \restrict[d]$ denote the category of functors from $\fdvs
\restrict [d]$ to $\pfset \op$, so that $\g \restrict[d] \op$ is the
category of presheaves on $\fdvs \restrict[d]$ with values in
profinite sets. Let $e_d \co  \g \op \rightarrow \g \restrict[d]\op$ denote the
canonical restriction functor. 
\end{nota}

The importance of the category $\g \restrict[d]$ is given by its
connection with the category $\unstalg$ \cite{hls}. Let
$\unstalg_d$ denote the full subcategory of $\unstalg$ the  objects
of which  have
underlying algebra of transcendence degree at most $d$;  the  category $\unstalg_d$
is closed under $F$--isomorphism and the following result holds.

\begin{thm}
{\rm \cite[Theorem II.2.8]{hls}}\qua
The functor $g\co  \unstalg \rightarrow \g$ induces an equivalence of
categories
 $
\unstalg_d / \nil
\cong
\g \restrict[d].
 $
\end{thm}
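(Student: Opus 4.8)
The plan is to deduce the statement from the equivalence $g\colon \unstalg/\nil \xrightarrow{\cong} \g_\omega$ of \fullref{thm:hls}, by analysing the restriction functor $e_d$. First I would record that the inclusion $\fdvs\restrict[d] \hookrightarrow \fdvs$ is full, so that $e_d$ has a fully faithful Kan-extension partial inverse $\rho_d\colon \g\restrict[d] \rightarrow \g$ with $e_d \rho_d \cong \mathrm{id}$, realising $\g\restrict[d]$ as a reflective subcategory of $\g$; the relevant adjunction identifies $\hom_{\g}(gK, \rho_d B)$ with $\hom_{\g\restrict[d]}(e_d gK, B)$. Since $g$ already annihilates $\nil$, the composite $e_d \circ g$ factors through $\unstalg/\nil$, and the theorem asserts that its restriction to the full subcategory $\unstalg_d/\nil$ is an equivalence onto $\g\restrict[d]$. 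Via \fullref{thm:hls} this reduces to showing that $e_d$ restricts to an equivalence from the full subcategory of $\g_\omega$ corresponding to $\unstalg_d/\nil$ onto $\g\restrict[d]$.

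For full faithfulness I would use that $g$ is fully faithful on $\unstalg/\nil$, so that for $K, K'$ of transcendence degree at most $d$ one has $\hom_{\unstalg/\nil}(K, K') \cong \hom_{\g}(gK, gK')$; combined with the adjunction $\hom_{\g}(gK, \rho_d e_d gK') \cong \hom_{\g\restrict[d]}(e_d gK, e_d gK')$, the claim reduces to the \emph{extension property} that the unit $g(K') \rightarrow \rho_d e_d g(K')$ is an isomorphism whenever $K'$ has transcendence degree at most $d$. Concretely, this says that the profinite set $\hom_{\unstalg}(K', H^* (BV))$ is recovered, for every $V \in \fdvs$, from its values $\hom_{\unstalg}(K', H^* (B\field^t))$ on the subspaces of dimension $t \le d$; that is, restriction to $\fdvs\restrict[d]$ loses no information for algebras of small transcendence degree.

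For essential surjectivity I would, given $H \in \g\restrict[d]$, form $\rho_d H \in \g$ and first verify that it is analytic, hence lies in $\g_\omega$: here the finiteness of $\fdvs\restrict[d]$ (only the objects $\field^0, \ldots, \field^d$ up to isomorphism) controls the size of $\rho_d H$, and one checks that its image under $\mapc(-, \field)$ is a subobject of an analytic functor, in the spirit of \fullref{prop:beta-analytic}. Applying the inverse equivalence (the right adjoint $a$) then produces $K := a(\rho_d H)$ with $g(K) \cong \rho_d H$, whence $e_d g(K) \cong e_d \rho_d H \cong H$; it remains to check that $K$ has transcendence degree at most $d$, which is again an instance of the same characterization of transcendence degree.

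The main obstacle is exactly this algebraic characterization: that the extension property of the second paragraph holds \emph{precisely} for unstable algebras of transcendence degree at most $d$ (modulo nilpotents), and dually that $a(\rho_d H)$ has transcendence degree at most $d$. This is the genuine content of the theorem and is not formal. I expect to prove it by invoking Quillen's description of the variety of an unstable algebra through elementary abelian $p$--groups, so that the Krull dimension (transcendence degree) of $K$ equals the maximal rank of a detecting elementary abelian, together with Lannes' computation of $\hom_{\unstalg}(-, H^* (BV))$ via the $T$--functor. The key point to establish is that a morphism $K \rightarrow H^* (BV)$ out of an algebra of transcendence degree at most $d$ has image of transcendence degree at most $d$, and is therefore determined, modulo nilpotents, by its composites with the restrictions $H^* (BV) \rightarrow H^* (B\field^t)$ to sub-elementary-abelians of rank $t \le d$; the universal example $K' = H^* (B\field^m)$, for which $g(K')(V) = \hom(V, \field^m)$, is the test case that pins down the correct bound.
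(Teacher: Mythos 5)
The paper offers no proof of this statement---it is quoted directly from \cite{hls}---so your proposal must be measured against the machinery the paper records around it and against Henn--Lannes--Schwartz themselves: the restriction functor $e_d$, its \emph{left} adjoint at the presheaf level $i_d \co \g\restrict[d]\op \rightarrow \g\op$, $S \mapsto S(\field^d)\times_{\mathrm{End}(\field^d)}\hom(-,\field^d)$, and the Key Lemma (\fullref{lem-key-hls}) that the counit $i_d e_d G \rightarrow G$ is always a monomorphism. Your skeleton (reduce via \fullref{thm:hls} to an analysis of $e_d$ and a fully faithful Kan-extension partial inverse, with the crux being which objects have invertible (co)unit) has the right shape and parallels HLS. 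The problem is that there are two inequivalent Kan-extension partial inverses to $e_d$, and your proposal runs them together. Read with the paper's variance conventions ($\g$ consists of functors to $\pfset\op$), your displayed adjunction $\hom_{\g}(gK,\rho_d B)\cong\hom_{\g\restrict[d]}(e_d gK,B)$ forces $\rho_d$ to be the opposite of the paper's $i_d$, which is the correct choice; but everything you subsequently do with $\rho_d$---``$\hom_{\unstalg}(K',H^*(BV))$ is recovered from its values on the $\field^t$ with $t\le d$,'' verified by a separation argument---describes the \emph{right} Kan extension of presheaves, the limit over all maps $\field^t \rightarrow V$ with $t \le d$. For that functor your extension property is false, already in your own test case: take $K'=H^*(B\field)$, $d=1$, $V=\field^2$. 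Then $g\op K'(V)=\hom(V,\field)$ has $p^2$ elements, while the limit consists of families $(x_u)_{u\co \field\to V}$ with $x_u\in\hom(\field,\field)$ constrained only by scalar compatibility $x_{u\lambda}=x_u\lambda$; these are the scalar-homogeneous functions on $\hom(\field,V)$, of cardinality $p^{p+1}$. Compatibility over $\fdvs\restrict[1]$ cannot detect additivity, so the unit is a proper inclusion, not an isomorphism.

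Even granting the correct adjoint $i_d$, your plan for the ``main obstacle'' proves the wrong half of it. With $i_d$, injectivity of the counit is exactly \fullref{lem-key-hls} and holds for \emph{every} object of $\g\op$; likewise your key point---that a morphism $K \rightarrow H^*(BV)$ is ``determined, modulo nilpotents, by its composites with the restrictions to rank $\le d$ subgroups''---is again only a separation (injectivity) statement, and it cannot characterize transcendence degree $\le d$: it holds for every $H^*(BW)$ regardless of $\dim W$, since linear maps are detected on lines (compare \fullref{exam-alpha-BV}). The genuine content of the theorem is the surjectivity half: for $K$ of transcendence degree at most $d$, every morphism $K \rightarrow H^*(BV)$ factors in $\unstalg/\nil$ as $K \rightarrow H^*(B\field^d) \rightarrow H^*(BV)$ with the second map induced by a linear map $V \rightarrow \field^d$. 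This factorization statement appears nowhere in your proposal, and it is where the real input lies (in HLS it rests on the structure theory of nilclosed algebras of finite transcendence degree, not merely on Quillen's theorem plus the $T$--functor). The same gap resurfaces in essential surjectivity: with the right Kan extension, $a(\rho_d H)$ will in general have transcendence degree greater than $d$ (the homogeneous-function example above), while with $i_d$ one must still prove that $i_d H$ is analytic and that $a(i_d H)$ lies in $\unstalg_d$---again the non-formal characterization, not a finiteness remark about $\fdvs\restrict[d]$.
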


The category $\g \restrict[d]\op$ is equivalent to the category of
profinite right $\mathrm{End} (\field ^d)$--sets
\cite[Section II.2]{hls}. The functor $e_d$ identifies with the
evaluation functor on $\field ^d$ and admits a left adjoint  
$
i_d \co  \g \restrict[d] \op \rightarrow \g \op
 $ 
which is given by $S \mapsto S (\field ^d)\times _{\mathrm{End}
(\field^d) } \hom (- , \field ^d) $. A fundamental property is  the following result.  
\begin{lem}
\llabel{lem-key-hls}
{\rm \cite[Key Lemma, Lemma 2.1]{hls}}\qua
Let $G \in \g \op$ be a presheaf of profinite sets on $\fdvs$. Then
the canonical morphism $i_d e_d G \rightarrow G$ is a monomorphism.
\end{lem}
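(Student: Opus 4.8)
The plan is to prove that the counit $\epsilon_G\co i_d e_d G \to G$ is a pointwise injection; since monomorphisms in $\g\op$ are detected objectwise, this suffices. Throughout I work with the explicit model $i_d e_d G(V)=G(\field^d)\times_{\mathrm{End}(\field^d)}\hom(V,\field^d)$, under which $\epsilon_G$ carries a class $[g,\phi]$ to $\phi^*g\in G(V)$, and the defining relations of the balanced product read $[e^*g,\phi]=[g,e\phi]$ for $e\in\mathrm{End}(\field^d)$.

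The formal part comes first. Since $i_d$ is the left adjoint of the restriction $e_d$ along the fully faithful inclusion $\fdvs\restrict[d]\hookrightarrow\fdvs$, the unit $S\to e_d i_d S$ is an isomorphism, and the triangle identity then forces $e_d\epsilon_G$ to be an isomorphism; equivalently $\epsilon_G$ is already an isomorphism on every object of dimension at most $d$. Now take $[g,\phi],[g',\psi]\in i_d e_d G(V)$ with $\phi^*g=\psi^*g'$. For any $j\co W\to V$ with $\dim W\le d$, naturality of $\epsilon_G$ together with the injectivity of $\epsilon_W$ yields $j^*[g,\phi]=j^*[g',\psi]$; in the notation of \fullref{sect-filt} this says the two classes agree in $\beta\restrict[d](i_d e_d G)(V)$. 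It therefore remains to show that sections of $i_d e_d G$ are separated by their restrictions to $\fdvs\restrict[d]$. As every $j\co\field^t\to V$ with $t\le d$ factors through some $k\co\field^d\to V$, and $k^*[g,\phi]=[g,\phi k]$ corresponds to $(\phi k)^*g$ under the isomorphism $i_d e_d G(\field^d)\cong G(\field^d)$, this reduces to the following implication: if $(\phi j)^*g=(\psi j)^*g'$ in $G(\field^d)$ for every $j\co\field^d\to V$, then $[g,\phi]=[g',\psi]$. Call this $(\star)$.

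To prove $(\star)$ I would induct on $\dim V$. For $\dim V\le d$ the map $\epsilon_V$ is an isomorphism, and applying a section of a surjection $\field^d\twoheadrightarrow V$ to the hypothesis recovers $\phi^*g=\psi^*g'$, so injectivity of $\epsilon_V$ settles this case. For $\dim V>d$, if $\ker\phi\cap\ker\psi$ contains a nonzero vector $v$ then both maps factor through $V/\langle v\rangle$; the hypothesis descends along a splitting of $V\twoheadrightarrow V/\langle v\rangle$, and the inductive hypothesis applied there gives the equality after pulling back along $V\twoheadrightarrow V/\langle v\rangle$. Hence one is reduced to the case $\ker\phi\cap\ker\psi=0$, that is, $(\phi,\psi)\co V\hookrightarrow\field^d\oplus\field^d$ injective with $d<\dim V\le 2d$.

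This core case of an injective $(\phi,\psi)$ with $V$ just larger than $\field^d$ is the main obstacle, and the difficulty is concentrated in the possibility that $\ker\phi\ne\ker\psi$. The prototype is $d=1$, $V=\field^2$ with $\phi,\psi$ the two coordinate functionals: feeding the maps $\field\to\field^2$ into the hypothesis forces $g=g'$ and $0^*g=g$, whence $[g,\phi]=[g,0]=[g',\psi]$, the common value factoring through the zero object of $\fdvs\restrict[d]$. In general the plan is to combine sections of $\phi$ and of $\psi$ with the non-invertible endomorphisms of $\field^d$ supplied by the nontrivial kernels (which exist precisely because $\dim V>d$): via the relations $[e^*g,\chi]=[g,e\chi]$ these collapse both $[g,\phi]$ and $[g',\psi]$ onto a common representative of strictly smaller rank, to which a secondary induction applies. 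The essential, non-formal input, and the reason the statement is special to $\fdvs$ and to the dimension bound, is exactly that $\mathrm{End}(\field^d)$ is a monoid rich in idempotent and rank-reducing endomorphisms; the corresponding bookkeeping of these degeneracies is where the real work of the lemma lies.
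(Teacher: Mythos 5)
You should first know that the paper contains no proof to compare against: Lemma \ref{lem-key-hls} is quoted verbatim from Henn--Lannes--Schwartz, so your argument has to stand entirely on its own. It does not, and the gap sits exactly where you admit it does. Your formal reductions are all sound: the counit $\epsilon_G$ is an isomorphism in dimensions at most $d$ (unit iso plus triangle identity), the lemma does reduce to your claim $(\star)$, the induction on $\dim V$ correctly disposes of the case $\ker\phi\cap\ker\psi\neq 0$ (the hypothesis descends along a splitting of $V\twoheadrightarrow V/\langle v\rangle$ and pulls back), and the $d=1$ prototype is computed correctly. But the remaining core case --- $(\phi,\psi)$ injective, $\dim V>d$, arbitrary $d$ --- is precisely the content of the Key Lemma, and for it you offer only a plan: ``collapse both classes onto a common representative of strictly smaller rank, to which a secondary induction applies.'' No inductive statement is formulated, no collapsing endomorphisms are constructed, and nothing is said about why the hypothesis of $(\star)$ survives the collapse. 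An argument that defers its essential case to unspecified ``bookkeeping'' is a strategy, not a proof.

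The gap is closable, and close to your plan, so it is worth pinpointing the one idea you are missing: exploit the fact that $(\phi j)^*g=(\psi j)^*g'$ for \emph{all} $j$ implies that $(\phi j)^*g$ is unchanged when $j$ is translated by any map $\field^d\to\ker\psi$. Concretely, after replacing $g$ by $\epsilon^*g$ (harmless, since $[\epsilon^* g,\phi]=[g,\epsilon\phi]=[g,\phi]$) where $\epsilon\in\mathrm{End}(\field^d)$ is an idempotent with image $\mathrm{Im}\,\phi$, choose $j_0$ with $\phi j_0=\epsilon$, pick $0\neq u\in\ker\psi$ (nonzero kernels are automatic as $\dim V>d$), set $w=\phi(u)\neq 0$, choose $\xi$ with $\xi(w)=1$, and perturb by $\delta(x)=-\xi(x)u$. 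Since $\mathrm{Im}\,\delta\subseteq\ker\psi$, the hypothesis gives $e^*g=(\phi(j_0+\delta))^*g=(\psi j_0)^*g'=(\phi j_0)^*g=g$ for $e:=\epsilon+\phi\delta$, and by construction $e(w)=0$, so $e$ is singular and $u\in\ker(e\phi)\cap\ker\psi$. Then $[g,\phi]=[e^*g,\phi]=[g,e\phi]$, the hypothesis of $(\star)$ holds for the pair $(g,e\phi),(g',\psi)$ because $e^*g=g$, and the two maps now have a common kernel vector, so your own kernel-intersection step and the primary induction on $\dim V$ finish the proof; no secondary induction on rank is needed. Without this construction (or an equivalent one), the core case, and hence the lemma, remains unproven in your write-up.
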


\begin{nota}
Let $\beta \restrict \co  \g \restrict[d] \op \rightarrow \g
\restrict[d] \op$ denote the analogue of the functor \break$\beta \restrict \co 
\g \op \rightarrow \g \op$. 
\end{nota}

The following result gives a criterion for the natural transformation
$1 \rightarrow \beta \restrict$ to yield an isomorphism when evaluated on an object.

\begin{prop}
\llabel{prop-Gd-alpha}
For nonnegative integers $n$ and $d$  and  $S \in \g \restrict[d]\op$
 a profinite presheaf on $\fdvs \restrict[d]$, there is a
natural isomorphism
\[
i_d (\beta \restrict S) 
\stackrel{\cong}{\rightarrow}
\beta \restrict (i_d S).
\]
In particular for $n \geq d$, there is an identification $\beta
\restrict (i_d S) =i_d S$.
\end{prop}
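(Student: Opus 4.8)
The plan is to route the comparison through the adjunction $i_d \dashv e_d$, so that the Key Lemma (\fullref{lem-key-hls}) supplies the decisive input. First I would record two facts about this adjunction. Since $e_d i_d S = (i_d S)(\field^d) = S(\field^d) \times_{\mathrm{End}(\field^d)} \hom(\field^d, \field^d)$ is canonically isomorphic to $S(\field^d)$ as a right $\mathrm{End}(\field^d)$--set, the unit $\eta\co 1 \rightarrow e_d i_d$ is an isomorphism, so $i_d$ is fully faithful. It follows that the counit $\epsilon_{i_d S} \co i_d e_d i_d S \rightarrow i_d S$ is an isomorphism, since the triangle identity $\epsilon_{i_d S}\circ i_d(\eta_S) = 1$ exhibits it as the inverse of the isomorphism $i_d(\eta_S)$.

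Next I would prove a commutation lemma: for any $G \in \g\op$ there is a natural isomorphism $e_d \beta\restrict G \cong \beta\restrict e_d G$ in $\g\restrict[d]\op$, where on the right $\beta\restrict$ is the analogous functor on $\g\restrict[d]\op$, defined using the subcategory $\fdvs\restrict[\min(n,d)]$. As $e_d$ is a restriction functor it preserves the canonical surjection $G \twoheadrightarrow \beta\restrict G$, so both sides are quotients of $e_d G$, and it suffices to check that the two equivalence relations on $G(\field^s)$ agree for each $s \le d$. The relation defining $(e_d \beta\restrict G)(\field^s)$ tests against all maps $\field^t \rightarrow \field^s$ with $t \le n$, whereas the one defining $(\beta\restrict e_d G)(\field^s)$ tests only against those with $t \le \min(n,d)$; containment in one direction is immediate, and the reverse follows from the epi--mono factorization of a linear map $\field^t \rightarrow \field^s$ through its image, whose dimension $r$ satisfies $r \le s \le d$ and $r \le t \le n$, hence $r \le \min(n,d)$. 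Applying this with $G = i_d S$ and using the unit isomorphism yields $e_d \beta\restrict i_d S \cong \beta\restrict e_d i_d S \cong \beta\restrict S$.

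The crux is to show that the counit $\epsilon_{\beta\restrict i_d S} \co i_d e_d \beta\restrict i_d S \rightarrow \beta\restrict i_d S$ is an isomorphism. It is a monomorphism by the Key Lemma (\fullref{lem-key-hls}). For surjectivity I would apply naturality of $\epsilon$ to the canonical surjection $c \co i_d S \twoheadrightarrow \beta\restrict i_d S$, giving the commutative square
\[
\xymatrix{
i_d e_d i_d S \ar[r]^{i_d e_d c} \ar[d]_{\epsilon_{i_d S}} & i_d e_d \beta\restrict i_d S \ar[d]^{\epsilon_{\beta\restrict i_d S}}\\
i_d S \ar[r]_{c} & \beta\restrict i_d S.
}
\]
The composite $c \circ \epsilon_{i_d S}$ is surjective, being an isomorphism followed by a surjection, so $\epsilon_{\beta\restrict i_d S}$ is surjective; a continuous bijection of profinite presheaves is an isomorphism. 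Combining the three steps produces the natural isomorphism $i_d \beta\restrict S \cong i_d e_d \beta\restrict i_d S \stackrel{\cong}{\rightarrow} \beta\restrict i_d S$. The final assertion is then immediate: for $n \ge d$ one has $\fdvs\restrict[\min(n,d)] = \fdvs\restrict[d]$, so $\beta\restrict$ is the identity on $\g\restrict[d]\op$ and the isomorphism reduces to the identification $\beta\restrict i_d S = i_d S$.

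I expect the main obstacle to be precisely the surjectivity of the counit in the last step. This is the one point where pure diagram-chasing does not suffice: the injectivity half is formal, but surjectivity rests on combining the full faithfulness of $i_d$ with the Key Lemma (\fullref{lem-key-hls}), which is where the geometry of $\fdvs$ genuinely enters.
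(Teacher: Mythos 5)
Your proof is correct and takes essentially the same route as the paper's: both identify $e_d\beta\restrict (i_d S)$ with $\beta\restrict S$, obtain injectivity of the canonical map $i_d e_d \beta\restrict (i_d S) \rightarrow \beta\restrict (i_d S)$ from the Key Lemma (\fullref{lem-key-hls}), and get surjectivity from the canonical surjection $i_d S \twoheadrightarrow \beta\restrict (i_d S)$. The only difference is one of detail: you spell out the two points the paper treats as immediate, namely the commutation $e_d\beta\restrict \cong \beta\restrict e_d$ (via epi--mono factorization, with the correct $\min(n,d)$ bookkeeping) and the surjectivity of the counit.
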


\begin{proof}
Write $G$ for $i_d S$, then there is a natural surjection $G
\twoheadrightarrow \beta \restrict G$. From the definitions, it is
clear that there is an identification $e_d (\beta \restrict G) =
\beta \restrict (e_d G)$. The composite 
\[
i_d \beta \restrict (e_d G) 
\stackrel{\cong}{\rightarrow}   
i_d e_d (\beta \restrict G) 
\hookrightarrow \beta \restrict G
\]
 is a monomorphism by \fullref{lem-key-hls}. This implies the
 first  statement; the second follows immediately.
\end{proof}

\subsection[The alpha functors and unstable algebras]{The functor $\alpha \restrict$ and unstable algebras}
\label{subsect:alphan}

The functor $\beta \restrict \op$ induces a functor on the
category $\unstalg / \nil$ by \fullref{thm:hls}; composing  with the functors involved in
the nil-localization $\unstalg \leftrightarrows \unstalg/ \nil$
yields a functor on the category $\unstalg$ of unstable algebras. (Note that the variance considerations for the functor $g$ require passage to the opposite category.) The
reader is referred to Henn, Lannes and Schwartz \cite{hls} for the relevant definitions
concerning the nil-localization of unstable algebras.

\begin{nota}
For $K \in \unstalg$ an unstable algebra and  $n$ a nonnegative integer or $\infty$,  write $\alpha \restrict
K$ for the nilclosed unstable algebra which is associated to the functor $\beta
\restrict \op g K$.
\end{nota}

The properties of the functor $\beta \restrict \co \g \op \rightarrow \g \op $ imply the following properties of the induced functor on unstable algebras.

\begin{prop}
\label{prop:alphan}
For $n$ a nonnegative integer or $\infty$  and  $K\in \unstalg$  an unstable
algebra, the following properties hold. 
\begin{enumerate} 
\item
The association $\alpha \restrict \co  K \mapsto \alpha \restrict K$
defines a functor $\alpha \restrict \co  \unstalg \rightarrow
\unstalg$ with values in nilclosed unstable algebras.
\item
The functor $\alpha \restrict[\infty]$ identifies with the nilclosure functor.
\item
 The functor $\alpha \restrict \co  \unstalg \rightarrow \unstalg $
 preserves injections.
\item
There is a  commutative diagram of natural monomorphisms 
$$
\xymatrix{
\alpha \restrict[n] K
\ar@{^(->}[rd]
\ar@{^(->}[d]
\\
\alpha \restrict[n+1] K
\ar@{^(->}[r]
&
 \alpha \restrict[\infty] K.
}
$$
\end{enumerate}
\end{prop}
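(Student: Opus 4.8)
The plan is to deduce each of the four properties of $\alpha\restrict$ from the corresponding already-established property of $\beta\restrict\op$ on $\g$, transported through the equivalence $g\co\unstalg/\nil\stackrel{\cong}{\to}\g_\omega$ of \fullref{thm:hls} together with the nil-localization adjunction $\unstalg\leftrightarrows\unstalg/\nil$. The key observation is that $\alpha\restrict$ is by definition the composite that sends $K$ to the nilclosed algebra associated to $\beta\restrict\op\, gK$; since $\beta\restrict$ preserves analytic objects by \fullref{prop:beta-analytic}, this composite lands in $\g_\omega\op$ and hence corresponds to a genuine nilclosed unstable algebra, giving the functor of part (1).

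First I would treat parts (2)--(4), which are the most direct. For (2), note that $\beta\restrict[\infty]$ is equivalent to the identity functor by \fullref{prop:beta-prop}(1), so $\alpha\restrict[\infty]$ agrees with the composite of $g$, the identity, and the inverse equivalence followed by the counit of the nil-localization, which is precisely the nilclosure functor. For (3), injection-preservation, I would use that $g$ sends $F$-monomorphisms of unstable algebras to monomorphisms in $\g_\omega$ (this is the content of the equivalence), that $\beta\restrict\op$ preserves injections in $\presh$ and hence in $\g\op$ (established for full subcategories in the Proposition on $\beta_\cald$), and that the functor $a$ inducing the inverse equivalence preserves monomorphisms; composing these shows $\alpha\restrict$ preserves injections at the level of $\unstalg$. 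For (4), the commutative triangle of natural monomorphisms, I would apply the natural surjections $1=\beta\restrict[\infty]\to\beta\restrict[n+1]\to\beta\restrict$ of \fullref{prop:beta-prop}(2) in the dual category; dualizing, these become natural monomorphisms $\beta\restrict\op\hookrightarrow\beta\restrict[n+1]\op\hookrightarrow\beta\restrict[\infty]\op$ of endofunctors on $\g\op$, which transport under the equivalence and the nilclosure construction to the asserted commutative diagram of natural monomorphisms in $\unstalg$.

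The main obstacle will be bookkeeping the variance carefully throughout. The functor $g$ is contravariant in the sense that it is naturally described via $g\op\co\unstalg\op\to\g\op$, so a surjection $G\twoheadrightarrow\beta\restrict G$ in $\g\op$ corresponds to a monomorphism in $\g$, and I must ensure that a surjection of presheaves becomes a monomorphism of unstable algebras under the passage back through $a$ and the nil-localization. Concretely, the surjections $\beta\restrict[n+1]\twoheadrightarrow\beta\restrict$ in $\g\op$ of \fullref{prop:beta-prop}(2) yield, after applying the right adjoint $a$ and nilclosure, the monomorphisms of part (4); getting the arrows to point the correct way in each of the three functor categories ($\g$, $\g\op$, $\unstalg$) is where sign-of-arrow errors are easy to make.

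Once the variance is pinned down, the remaining verifications are formal consequences of the cited structural results, so I would not expect any genuinely hard analytic input beyond \fullref{prop:beta-analytic} guaranteeing that the construction stays within the subcategory $\g_\omega$ on which the Henn--Lannes--Schwartz equivalence applies.
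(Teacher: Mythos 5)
Your overall strategy is the paper's own: its proof is precisely a ``direct translation'' of \fullref{prop:beta-prop} through the equivalence of \fullref{thm:hls}, made legitimate by \fullref{prop:beta-analytic}, and your treatment of parts (1), (2) and (4) carries this out correctly (in (4) the dualized natural transformations are endofunctors on $\g$ rather than $\g\op$, but your arrows point the right way).

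Part (3), however, contains exactly the sign-of-arrow error you warned yourself about, and as written the chain of implications does not compose. Under $g\op$, a monomorphism $K\hookrightarrow L$ of unstable algebras does \emph{not} become an injection of presheaves: it becomes an objectwise \emph{surjection} $g\op L\twoheadrightarrow g\op K$, because the restriction map $\hom_{\unstalg}(L,H^*(BV))\rightarrow\hom_{\unstalg}(K,H^*(BV))$ is onto by the nonlinear injectivity of $H^*(BV)$ (\fullref{prop:non-lin-inj}); equivalently, monomorphisms in $\g$ are epimorphisms in the presheaf category $\g\op$. Consequently the property of $\beta_\cald$ you must invoke is that it \emph{preserves surjections} (the first item of the Proposition on $\beta_\cald$), not that it preserves injections (the second item): from $g\op L\twoheadrightarrow g\op K$ one gets $\beta\restrict g\op L\twoheadrightarrow \beta\restrict g\op K$, that is a monomorphism $\beta\restrict\op gK\hookrightarrow \beta\restrict\op gL$ in $\g$, and then the right adjoint $a$ preserves monomorphisms, yielding $\alpha\restrict K\hookrightarrow \alpha\restrict L$ in $\unstalg$. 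Injection-preservation of $\beta_\cald$ on presheaves says nothing about the morphisms actually in play, since they are not presheaf injections. The error is purely one of variance and is repaired by citing the other half of the same Proposition, but the step as you wrote it would fail.
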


\begin{proof}
The result is a direct translation of \fullref{prop:beta-prop} using \fullref{prop:beta-analytic} to apply \fullref{thm:hls}.
\end{proof}

The behaviour of the functors $\alpha \restrict $ on unstable algebras
of finite transcendence degree is described by the following result.

\begin{thm}
\label{thm:tdeg-alphan}
If $K \in \unstalg_d $ is an unstable algebra of transcendence degree
$d$ then $\alpha \restrict K$ is an unstable algebra of transcendence
degree at most $d$. Moreover, if $n \geq d$, then 
$
 \alpha \restrict K
 $ 
identifies with the nilclosure of $K$.
\end{thm}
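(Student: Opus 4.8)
The plan is to transport the statement across the Henn--Lannes--Schwartz equivalences and reduce everything to the behaviour of $\beta\restrict$ on objects induced from $\fdvs\restrict[d]$, which is already controlled by \fullref{prop-Gd-alpha}. Throughout I write $G$ for the presheaf of profinite sets $gK\in\g\op$, and recall that, modulo the opposite-category bookkeeping in the definition of $\alpha\restrict$, the algebra $\alpha\restrict K$ is the nilclosed unstable algebra corresponding to $\beta\restrict G$ under the equivalence $\g_\omega\cong\unstalg/\nil$ of \fullref{thm:hls}. Since $\beta\restrict$ preserves analytic objects (\fullref{prop:beta-analytic}), this correspondence is well defined.

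First I would record the characterization of transcendence degree in terms of $\g$. Because $\unstalg_d$ is closed under $F$--isomorphism, the equivalence $\unstalg_d/\nil\cong\g\restrict[d]$ of \cite[Theorem II.2.8]{hls} identifies $K\in\unstalg_d$ with $S:=e_d G\in\g\restrict[d]\op$. As $i_d$ is left adjoint to $e_d$ and fully faithful (the unit $\mathrm{id}\rightarrow e_d i_d$ is an isomorphism, by the explicit description of $i_d$), this means precisely that the canonical monomorphism $i_d e_d G\rightarrow G$ of \fullref{lem-key-hls} is an isomorphism, that is, $G\cong i_d S$ in $\g_\omega\op$. Conversely, any object of $\g_\omega\op$ lying in the essential image of $i_d$ corresponds to an unstable algebra of transcendence degree at most $d$.

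For the first assertion I would then simply apply \fullref{prop-Gd-alpha}: from $G\cong i_d S$ we obtain
$$
\beta\restrict G \cong \beta\restrict(i_d S) \cong i_d(\beta\restrict S),
$$
so $\beta\restrict G$ again lies in the essential image of $i_d$, now applied to $\beta\restrict S\in\g\restrict[d]\op$. By the characterization above this forces $\alpha\restrict K$ to have transcendence degree at most $d$. The second assertion follows from the same computation together with the "in particular" clause of \fullref{prop-Gd-alpha}: when $n\geq d$ one has $\beta\restrict(i_d S)=i_d S$, hence $\beta\restrict G\cong G$. Thus $\beta\restrict G$ agrees with $\beta\restrict[\infty] G=G$, and since $\alpha\restrict[\infty]$ is the nilclosure functor by \fullref{prop:alphan}, the nilclosed algebra $\alpha\restrict K$ must coincide with the nilclosure of $K$.

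The hard part will not be the formal manipulation but the bookkeeping in the second paragraph: one must justify that transcendence degree at most $d$ is exactly the condition $i_d e_d G\cong G$, keeping careful track of the passage between $\unstalg$, $\unstalg/\nil$ and $\g_\omega\op$ and of the opposite-category conventions built into $\alpha\restrict$. Once this dictionary is in place, \fullref{prop-Gd-alpha} does all the real work, and the invariance of transcendence degree under nil-localization, used implicitly when reading the degree of $\alpha\restrict K$ off its image in $\g$, is guaranteed by the closure of $\unstalg_d$ under $F$--isomorphism.
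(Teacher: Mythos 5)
Your proof is correct and is essentially the paper's own argument: the paper's entire proof of this theorem is the single assertion that it follows immediately from \fullref{prop-Gd-alpha}, and what you have written out is exactly the dictionary --- $K$ has transcendence degree at most $d$ if and only if $g\op K$ lies in the essential image of $i_d$ --- that makes that deduction immediate. The one weak point is the justification you give for the forward direction of this dictionary. The invertibility of the counit $i_d e_d G \rightarrow G$ for $G = g\op K$, $K \in \unstalg_d$, is \emph{not} a formal consequence of the equivalence $\unstalg_d / \nil \cong \g\restrict[d]$ together with full faithfulness of $i_d$ and \fullref{lem-key-hls}: a full subcategory of $\g_\omega\op$ on which $e_d$ restricts to an equivalence need not be the essential image of the \emph{left} adjoint to $e_d$ (the essential image of the right adjoint to restriction, where it exists, satisfies the same hypotheses, and there the counit of the adjunction $(i_d, e_d)$ is monic but generally not invertible), so the phrase ``this means precisely that'' overstates what abstract nonsense delivers. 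What you actually need is that the inverse of the equivalence of \cite[Theorem II.2.8]{hls} is implemented by $i_d$; this is precisely how Henn, Lannes and Schwartz construct that equivalence --- it is what their Key Lemma, quoted here as \fullref{lem-key-hls}, is for --- so the fact is true and should simply be cited from \cite[Section II.2]{hls} rather than derived formally. Granting that, your reduction to \fullref{prop-Gd-alpha}, and the two conclusions you draw from it ($\beta\restrict$ preserves the essential image of $i_d$, and $\beta\restrict i_d S = i_d S$ for $n \geq d$), coincide with the paper's intended argument.
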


\begin{proof}
The result is an immediate consequence of \fullref{prop-Gd-alpha}.
\end{proof}



\section[On the varieties associated to the E--cohomology rings]{On the varieties associated to the rings $H^*_E(-)$}
\label{Sec-varieties}

The study of the algebraic variety associated to a ring $H^*_E (X)$ is
essentially equivalent to the study of the functor $g H^* _ E (X)
\in \g$. The results of this section are stated for the rings $H^*
_E$; there are analogous results for the rings $\chern ^*$. 

\subsection{Preliminaries}
To generalize the results of Green, Hunton and Schuster \cite{ghs} requires realizing
morphisms between certain unstable  algebras topologically.  This uses
part of the theory of unstable algebras.

Recall the nonlinear injectivity property
of the objects $H^* (BV) \in \unstalg$. 

\begin{prop}
\label{prop:non-lin-inj}
{\rm \cite[Corollary 3.8.7]{s}}\qua 
For a monomorphism $i\co  K \hookrightarrow L$ and  a morphism $\phi \co  K
\rightarrow H^* (BV)$ in $\unstalg$,  there
exists an extension $\tilde \phi \co  L \rightarrow H^* (BV)$ in
$\unstalg$ such that $\phi = \tilde \phi \circ i$.
\end{prop}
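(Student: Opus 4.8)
The plan is to deduce the nonlinear injectivity of $H^*(BV)$ from its \emph{linear} injectivity together with Lannes' theory of the division functor $T_V$, following Schwartz \cite{s}. I would first reformulate the statement: it asserts exactly that the representable functor $\hom_{\unstalg}(-, H^*(BV))$ carries monomorphisms of unstable algebras to surjections of sets. So, given the monomorphism $i\co K \hookrightarrow L$ and a class $\phi \in \hom_{\unstalg}(K, H^*(BV))$, I must produce a preimage in $\hom_{\unstalg}(L, H^*(BV))$ lying over $\phi$.

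The key tool is Lannes' adjunction (see \cite{l,s}), which supplies a natural isomorphism
\[
\hom_{\unstalg}(K, H^*(BV)) \cong \hom_{\unstalg}(T_V K, \field),
\]
where the right-hand side is the set of $\field$--points of the unstable algebra $T_V K$. Under this identification $\phi$ corresponds to an algebra map $\bar\phi \co T_V K \rightarrow \field$. Since the adjunction is natural in the source variable, extending $\phi$ across $i$ is the same as extending $\bar\phi$ across $T_V i\co T_V K \rightarrow T_V L$; it therefore suffices to lift the $\field$--point $\bar\phi$ along $T_V i$. Here I would use that $T_V$ is exact, hence preserves monomorphisms, so that $T_V i$ induces an injection $(T_V K)^0 \hookrightarrow (T_V L)^0$ of commutative $\field$--algebras. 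As any map of unstable algebras to $\field$ is determined by its degree--zero component (positive degrees necessarily map to zero, and this is automatically an $\cala$--linear algebra map), the problem reduces to the commutative-algebra question of extending an $\field$--point across this injection of degree--zero parts.

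The main obstacle — and the place where the substantive content of the Lannes--Schwartz theory enters — is precisely this extension of points. It succeeds because the degree--zero parts $(T_V K)^0$ and $(T_V L)^0$ are, by the structure theory, (pro-)products of copies of $\field$; for such algebras an injection $A \hookrightarrow B$ corresponds to a surjection of the associated (profinite) sets of $\field$--points, so every point of $A$ lifts to a point of $B$. This is exactly what rules out the potential residue-field obstruction to lifting a point. Granting this, I obtain a map $\psi\co T_V L \rightarrow \field$ restricting to $\bar\phi$ along $T_V i$.

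Finally, transporting $\psi$ back through the natural adjunction isomorphism yields $\tilde\phi\co L \rightarrow H^*(BV)$, and naturality of the adjunction guarantees $\tilde\phi \circ i = \phi$, which is the required extension. I expect the verification of the degree--zero structure together with the exactness of $T_V$ to carry the entire weight of the argument; once these are in hand, every remaining step is formal manipulation of the adjunction.
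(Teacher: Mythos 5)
Your proof is correct, and it takes essentially the same route as the source the paper relies on: the paper offers no proof of its own, quoting the statement verbatim from Schwartz \cite[Corollary 3.8.7]{s}, whose argument proceeds exactly along your lines. The three pillars you isolate --- the natural adjunction $\hom_{\unstalg}(K, H^*(BV)) \cong \hom_{\unstalg}(T_V K, \field)$, the exactness of $T_V$ (so that $T_V i$ remains a monomorphism), and the fact that the degree-zero part of any unstable algebra is $p$--Boolean (every element satisfies $x^p = x$, by the restriction axiom in degree zero), so that injections of these algebras dualize to surjections of profinite sets of $\field$--points and every point lifts --- are precisely the ingredients of the standard proof, and each is justified correctly in your write-up.
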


\begin{cor}
\llabel{cor-top-realization}
Let $X$ be a topological space for which the Hurewicz morphism $[BV, X] \rightarrow
\hom_{\unstalg} (H^* (X) , H ^* (BV)) $ is surjective, where $V \in
\fdvs$ is an elementary abelian $p$--group.
Then there are canonical bijections: 
\begin{align*}
[BV, X] _H  &\cong  \hom_{\unstalg} (H^* (X), H^* (B V));
\\
[BV , X]_{H_E}  &\cong  \hom_{\unstalg} (H^*_E (X), H^*_E (BV)).
\end{align*}
\end{cor}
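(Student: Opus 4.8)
The plan is to treat the two bijections separately. The first, $[BV,X]_H \cong \hom_{\unstalg}(H^* (X), H^* (BV))$, is immediate: it is precisely \fullref{lem:Hurewicz-surj} applied with $Z = BV$, since the hypothesis of the Corollary is exactly that the Hurewicz morphism $[BV,X] \to \hom_{\unstalg}(H^* (X), H^* (BV))$ is surjective.

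For the second bijection, recall that $[BV,X]_{H_E}$ is \emph{defined} as the image of the map $[BV,X] \to \hom_{\unstalg}(H^*_E (X), H^*_E (BV))$ induced by $H^*_E$. Thus a canonical injection of $[BV,X]_{H_E}$ into the target is built in, and the only thing to prove is that this map is \emph{surjective}: that every morphism $\psi \co H^*_E (X) \to H^*_E (BV)$ of unstable algebras is realized as $H^*_E (f)$ for some $f \co BV \to X$. First I would compose $\psi$ with the inclusion $H^*_E (BV) \hookrightarrow H^* (BV)$ to obtain a morphism $\psi' \co H^*_E (X) \to H^* (BV)$ in $\unstalg$. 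Since $H^*_E (X) \hookrightarrow H^* (X)$ is a monomorphism, the non-linear injectivity of $H^* (BV)$ recorded in \fullref{prop:non-lin-inj} supplies an extension $\tilde\psi \co H^* (X) \to H^* (BV)$ with $\psi' = \tilde\psi \circ i$, where $i$ denotes the inclusion. The Hurewicz surjectivity hypothesis then realizes $\tilde\psi$ topologically: there exists $f \co BV \to X$ with $H^* (f) = \tilde\psi$.

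It then remains to verify that this $f$ recovers $\psi$, that is, that $H^*_E (f) = \psi$. By naturality of the inclusions $H^*_E (-) \hookrightarrow H^* (-)$, the composite of $H^*_E (f)$ with $H^*_E (BV) \hookrightarrow H^* (BV)$ equals $H^* (f) \circ i = \tilde\psi \circ i = \psi'$, which is by construction the composite of $\psi$ with the same inclusion. As $H^*_E (BV) \hookrightarrow H^* (BV)$ is a monomorphism, this forces $H^*_E (f) = \psi$, establishing surjectivity and hence the desired bijection.

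The step I expect to require the most care is this final verification, together with the correct application of \fullref{prop:non-lin-inj} to the inclusion $H^*_E (X) \hookrightarrow H^* (X)$ rather than to an inclusion involving $X$ directly; the genuine content lies in the non-linear injectivity property, which is exactly what allows an abstract morphism defined only on the subalgebra $H^*_E (X)$ to be extended over all of $H^* (X)$ and thereby realized by a map of spaces. The compatibility with the $E$--subrings is then a purely formal consequence of the functoriality of $H^*_E$ and the injectivity of the inclusion at the target $BV$.
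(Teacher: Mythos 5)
Your proposal is correct and follows essentially the same route as the paper: the first bijection via \fullref{lem:Hurewicz-surj}, and the second by composing with $H^*_E(BV) \hookrightarrow H^*(BV)$, extending over $H^*(X)$ using the non-linear injectivity of \fullref{prop:non-lin-inj}, and realizing the extension topologically via the Hurewicz surjectivity hypothesis. Your final verification that $H^*_E(f) = \psi$ (using injectivity of the inclusion at $BV$) is exactly the point the paper leaves implicit in its appeal to the factorization $[BV,X]_H \rightarrow [BV,X]_{H_E}$, so you have simply made the same argument more explicit.
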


\begin{proof}
The first bijection is a special case of \fullref{lem:Hurewicz-surj}. To prove the second, it is sufficient to show
that the functor $H^* _E$ induces a surjection from $[BV  ,X]$ onto $\hom_{\unstalg} (H^* _E (X),H^* _E (BV))$. 

A  morphism 
$\alpha\co  H^* _E
(X) \rightarrow H^* _E(BV)$ of unstable algebras extends to a morphism $H^* (X) \rightarrow H^* (BV)$ by \fullref{prop:non-lin-inj}. Hence, $\alpha$ is in the image of the morphism $[BV, X] _H
\rightarrow [BV, X]_{H_E}$ and the result follows by the first
statement. 
\end{proof}

The hypothesis on the surjectivity of the Hurewicz homomorphism is
satisfied, for example, if the topological space $X$ satisfies the hypotheses of
the following result.

\begin{thm}{\rm \cite{l}}\qua
\llabel{thm-Miller-conj}
Let $X$ be a connected topological spaces which is of finite type, nilpotent and such that $\pi_1 (X)$ is finite. Then the Hurewicz morphism
\[
[BV, X] \rightarrow
\hom_{\unstalg} (H^* (X) , H ^* (BV)) 
\]
 is an isomorphism. 
\end{thm}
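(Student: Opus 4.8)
The plan is to deduce the statement from Lannes' computation of the mod--$p$ cohomology of mapping spaces out of $BV$ by means of his functor $T_V$. Recall that $T_V\co \unstalg \rightarrow \unstalg$ is left adjoint to the functor $K \mapsto K \otimes H^*(BV)$; in particular, taking $K = \field$ yields a natural bijection
$$
\hom_{\unstalg}(H^* (X), H^* (BV)) \cong \hom_{\unstalg}(T_V H^* (X), \field),
$$
so that the target of the Hurewicz morphism is identified with the set of augmentations of the unstable algebra $T_V H^* (X)$, equivalently the set of its connected components detected in cohomology.

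First I would observe that, under this adjunction, the Hurewicz morphism is induced by the adjoint $T_V H^* (X) \rightarrow H^* (\mathrm{map}(BV, X))$ of the map on cohomology coming from the evaluation $BV \times \mathrm{map}(BV, X) \rightarrow X$; on $\pi_0$ this sends a class $f\co BV \rightarrow X$ to the augmentation it determines. The essential input is then Lannes' theorem: under the stated hypotheses on $X$, this adjoint is an isomorphism of unstable algebras, so that $H^* (\mathrm{map}(BV, X)) \cong T_V H^* (X)$. This is where all the hypotheses are used. Finite type guarantees that $H^* (X)$ is of finite type, so that $T_V$ is exact and commutes with the relevant limits; nilpotence together with the finiteness of $\pi_1 (X)$ guarantees that $X$ is $\field$--complete in the appropriate sense and that the associated (unstable Adams / Eilenberg--Moore) spectral sequence computing $H^* (\mathrm{map}(BV, X))$ converges strongly to the target.

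Next I would pass to connected components. Since $T_V$ applied to a connected unstable algebra decomposes as a product of connected factors indexed by $\hom_{\unstalg}(H^* (X), H^* (BV))$, mirroring the decomposition of $\mathrm{map}(BV, X)$ into its connected components, the isomorphism $H^* (\mathrm{map}(BV, X)) \cong T_V H^* (X)$ induces a bijection between $[BV, X] = \pi_0\, \mathrm{map}(BV, X)$ and the indexing set $\hom_{\unstalg}(H^* (X), H^* (BV))$, and one checks that this bijection is exactly the Hurewicz morphism. The finiteness of $\pi_1 (X)$ enters again here: it ensures that each component of the mapping space is itself nilpotent and of finite type, so that distinct components are not merged on passage to cohomology (injectivity) and every augmentation of $T_V H^* (X)$ is realised by a genuine component (surjectivity).

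The main obstacle is Lannes' theorem itself, namely the identification $H^* (\mathrm{map}(BV, X)) \cong T_V H^* (X)$ and the strong convergence of the associated spectral sequence under the finiteness and nilpotence hypotheses. This is the substantial analytic content, amounting to the resolution of the Sullivan conjecture in this generality, and it is precisely for this reason that the result is attributed to Lannes. By contrast, the passage to a $\pi_0$--bijection is a comparatively formal matter, obtained by tracking the component decomposition through the adjunction defining $T_V$.
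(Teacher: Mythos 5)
The paper offers no proof of this statement: it is quoted directly from Lannes \cite{l}, and the citation stands in place of any argument, so there is no internal proof to compare yours against. Your outline does follow the shape of the cited source's argument: identify $\hom_{\unstalg}(H^*(X), H^*(BV))$ with the set of augmentations of $T_V H^*(X)$ via the adjunction defining $T_V$, invoke the mapping-space theorem identifying $T_V H^*(X)$ with the mod--$p$ cohomology of the relevant mapping space, and read off the bijection on $\pi_0$ (using that $T_V H^*(X)$ is of finite type, so its degree-zero part has only finitely many augmentations and these correspond exactly to components).

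One assertion in your sketch is false as stated, although it concerns your gloss on why Lannes' theorem holds rather than the logical chain itself. The hypotheses (connected, nilpotent, finite type, $\pi_1(X)$ finite) do not make $X$ $\field$--complete in any sense: $X = S^2$ satisfies all of them and is not $p$--complete. Nor is $\mathrm{map}(BV,X)$ equivalent to $\mathrm{map}(BV,X^{\wedge}_p)$ in general; for $X = S^2$ the component of the constant map is $S^2$ on one side (by Miller's theorem) and $(S^2)^{\wedge}_p$ on the other. What the hypotheses actually provide is the arithmetic fracture square for the nilpotent finite-type space $X$, which, combined with the vanishing of $\widetilde{H}^*(BV;\mathbb{Q})$ and of $\widetilde{H}^*(BV;\field_q)$ for primes $q \neq p$, shows that the comparison with the $p$--completion $X^{\wedge}_p$ induces a bijection on $\pi_0$ of mapping spaces and an isomorphism on mod--$p$ cohomology; the finiteness of $\pi_1(X)$ is what makes this descent work and what allows Lannes' theorem for the $p$--complete space $X^{\wedge}_p$ to apply. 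Since you are treating Lannes' mapping-space theorem as a black box anyway, this is a misattribution of where the hypotheses act rather than a gap that breaks your derivation; but as written, the justification you offer for the key input is incorrect, and the reduction to the $p$--complete case is a genuine step of the proof rather than something absorbed into a completeness property of $X$.
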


\subsection[Algebraic approximations]{Algebraic approximation to the rings $H^* _E$}

 The theory of the Chern subring for group cohomology \cite{ghs} admits a partial
generalization under suitable hypotheses; the general result is stated
below in \fullref{thm-comparison}.

\begin{thm}
\llabel{thm-comparison}
Suppose that there exists a positive integer
$n$ such that 
\begin{enumerate}
\item
the Hurewicz hypothesis $\hurewicz (B\field ^n)$ holds;
\item
$E$ satisfies the height--$n$ detection property.
\end{enumerate}
Then for $X$ a topological space for which the  morphism 
$$[BV, X] \rightarrow
\hom_{\unstalg} (H^* (X) , H ^* (BV)) 
$$
 is surjective, 
  the natural transformation $1 \rightarrow \beta \restrict$ induces an isomorphism of functors in $\g \op$
\[
g\op H^* _E (X) 
\stackrel{\cong}{\rightarrow}
\beta \restrict g\op H^* _E (X) .
\]
\end{thm}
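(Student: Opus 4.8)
The plan is to apply \fullref{prop:fund-beta} to the natural transformation $1 \rightarrow \beta\restrict$, recasting the assertion that this is an isomorphism on $g\op H^*_E(X)$ as a statement about a restricted monomorphism. By \fullref{prop:beta-prop}, the canonical map $G \twoheadrightarrow \beta\restrict G$ is always a surjection in $\g\op$, so to prove it is an isomorphism it suffices to prove it is a monomorphism. First I would set $G := g\op H^*_E(X)$ and consider the factorization of the identity morphism $G \rightarrow G$ through the quotient $\beta\restrict G$. Applying \fullref{prop:fund-beta} with $\nu = \mathrm{id}_G$, the map $\nu_\cald \co \beta\restrict G \rightarrow G$ is a monomorphism if and only if the restriction $G|_{\fdvs\restrict} \rightarrow G|_{\fdvs\restrict}$ is a monomorphism; since the latter restriction is the identity, hence trivially a monomorphism, I must instead use the proposition in the direction that detects the full monomorphism from behaviour on the subcategory $\fdvs\restrict$. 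Concretely, the content to be shown is that two sections of $G(BV)$ which agree after restriction along every morphism $B\field^n \rightarrow BV$ must already be equal in $G(BV)$.

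The key step is therefore to identify $G(BV) = \hom_{\unstalg}(H^*_E(X), H^*_E(BV))$ with the set $[BV, X]_{H_E}$ and to reduce the separation statement to the height--$n$ detection already established. Under the surjectivity hypothesis on the Hurewicz morphism, \fullref{cor-top-realization} supplies a canonical bijection $[BV, X]_{H_E} \cong \hom_{\unstalg}(H^*_E(X), H^*_E(BV))$, so the values of $G$ are computed by the functor $[-, X]_{H_E}$ evaluated on elementary abelian $p$--groups. The restriction maps along $B\field^n \rightarrow BV$ are then precisely the components of the adjoint morphism $\delta_{[-,X]_{H_E}}$. Thus the required monomorphism property of $1 \rightarrow \beta\restrict$ on $G$ amounts exactly to the injectivity of
$$
\delta_{[-,X]_{H_E}}\co [BV, X]_{H_E} \rightarrow [B\field^n, X]_{H_E}^{\hom(\field^n, V)}.
$$

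To obtain this injectivity I would invoke \fullref{cor-hurewicz-height}, whose two hypotheses—the height--$n$ detection property and the Hurewicz hypothesis $\hurewicz(B\field^n)$—are exactly the standing assumptions of the theorem. That corollary gives that $Q_E \co [BV, X]_E \rightarrow [BV, X]_{H_E}$ is a bijection, and \fullref{prop:n-detect} gives injectivity of $\delta_{[-,X]_E}$ on the $E$--cohomology side; compatibility of $\delta$ with $Q_E$ (the commutative square appearing in the proof of \fullref{cor-hurewicz-height}) transports this injectivity across to the $H_E$ side. The main obstacle I anticipate is bookkeeping the variance: $g\op$ lands in presheaves on $\fdvs$ rather than functors on $\fdvs$, so care is needed to ensure that the restriction maps used in the definition of $\beta\restrict$ correspond correctly to the evaluation-adjoint $\delta$ on the homotopy-theoretic side, and that the bijections of \fullref{cor-top-realization} are natural in $V$ so as to assemble into a morphism of presheaves rather than merely a family of set bijections. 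Once this identification of $\delta$ with the structure maps of $\beta\restrict$ is pinned down, the result follows formally.
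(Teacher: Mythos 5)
Your proposal is correct and is essentially the paper's own argument: surjectivity of $1 \rightarrow \beta\restrict$ is automatic, and injectivity is proved by using \fullref{cor-top-realization} to realize sections of $g\op H^*_E(X)(V)$ by maps $BV \rightarrow X$ (equivalently, to identify these values with $[BV,X]_{H_E}$), then transporting the injectivity of $\delta_{[-,X]_E}$ given by \fullref{prop:n-detect} across the maps $Q_E$ via the commutative square in the proof of \fullref{cor-hurewicz-height}, which is precisely the paper's deduction that $E^*(f_1)=E^*(f_2)$ and hence that the two sections agree. Two small points to tidy: the opening appeal to \fullref{prop:fund-beta} with $\nu=\mathrm{id}_G$ is vacuous, since the identity factors through $\beta\restrict G$ only when the quotient map is already injective, which is the very statement being proved (you rightly abandon this route); and $g\op H^*_E(X)(V)$ is by definition $\hom_{\unstalg}(H^*_E(X),H^*(BV))$, not $\hom_{\unstalg}(H^*_E(X),H^*_E(BV))$, so the identification with $[BV,X]_{H_E}$ is not definitional but is exactly what the factorization through $H^*_E(BV)$ supplied by \fullref{prop:non-lin-inj} in the proof of \fullref{cor-top-realization} provides.
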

 
\begin{proof}
The natural transformation $1 \rightarrow \beta \restrict$ is
surjective, hence it is sufficient to prove injectivity. Let $V\in \fdvs$ be an elementary abelian $p$--group and let  $y_1, y_2 \in g
H^*_E (X) (V)$ be elements such that  $j^* y_1= j^* y_2$, for all morphisms $j\co  \field ^n \rightarrow
V$.  The definition
of the functor $g\co  \unstalg \op \rightarrow \g \op$ together with
\fullref{cor-top-realization} imply that the elements $y_i$ are
represented by morphisms $H^* _E (f_i)$ for $f_i \in [BV, X]$, $i \in
\{1, 2\}$. 

The hypothesis upon the elements $y_i$ implies that the two 
composites in the following  diagram coincide:
\[
\xymatrix{
H^* _E (X) 
\ar@<1ex>[r]^{f^*_1} 
\ar@<0ex>[r]_{f^*_2} 
&
H^* _ E (BV) 
\ar[r] ^{j ^*}
&
H^*_E (B \field ^n),
}
\]
for all morphisms $j\co  \field ^n \rightarrow V$. The morphisms $E^* (f_1), E^*
(f_2) \co  E^* (X) \rightrightarrows E^* (BV)$ coincide by \fullref{cor-hurewicz-height}; hence
\fullref{prop:E-Hurewicz} implies that the elements $y_1$ and $y_2$ are equal. It follows that the canonical morphism $g\op H^* _E (X) 
\twoheadrightarrow
\beta \restrict g\op H^* _E (X)$ is a monomorphism, as required.
\end{proof}

\begin{cor}
\llabel{cor-canon-F-mono}
Let $E$ be a spectrum which satisfies the hypotheses of \fullref{thm-comparison}, and let $X$ be a topological space
 for which the  morphism $$[BV, X] \rightarrow
\hom_{\unstalg} (H^* (X) , H ^* (BV))$$ is surjective. 
 Then there is a canonical morphism 
\[
H^* _E (X) 
\rightarrow 
\alpha \restrict H^* (X) 
\]
in $\unstalg$ which induces a monomorphism in $\unstalg / \nil$.  
\end{cor}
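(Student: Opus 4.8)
The plan is to construct the morphism as a composite
\[
H^* _E (X)
\stackrel{\eta}{\longrightarrow}
\alpha \restrict H^* _E (X)
\stackrel{\alpha \restrict (\iota)}{\longrightarrow}
\alpha \restrict H^* (X),
\]
where $\iota\co  H^* _E (X) \hookrightarrow H^* (X)$ is the canonical inclusion of unstable algebras and $\eta$ is the map identified below, and then to verify that this composite becomes a monomorphism after localization away from nilpotents. Since the substance of the result is already carried by \fullref{thm-comparison}, the remaining work is to assemble the factors correctly and to check injectivity in $\unstalg / \nil$.

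First I would analyse the left-hand factor using \fullref{thm-comparison}, which asserts that the natural transformation $1 \rightarrow \beta \restrict$ induces an isomorphism $g\op H^* _E (X) \stackrel{\cong}{\rightarrow} \beta \restrict g\op H^* _E (X)$ in $\g \op$. Translating this through the equivalence $\unstalg / \nil \cong \g_\omega$ of \fullref{thm:hls} and the nilclosure construction used to define $\alpha \restrict$, the isomorphism says precisely that the canonical monomorphism $\alpha \restrict H^* _E (X) \hookrightarrow \alpha \restrict[\infty] H^* _E (X)$ of \fullref{prop:alphan} is an isomorphism. As $\alpha \restrict[\infty]$ is the nilclosure functor (again \fullref{prop:alphan}), this identifies $\alpha \restrict H^* _E (X)$ with the nilclosure of $H^* _E (X)$; I would then take $\eta$ to be the nilclosure map followed by this identification. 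By construction $\eta$ is an $F$--isomorphism, hence an isomorphism in $\unstalg / \nil$.

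For the right-hand factor, \fullref{prop:alphan} records that $\alpha \restrict$ preserves injections, so $\alpha \restrict (\iota)$ is a monomorphism. Because this preservation property is the translation, via the equivalence $\unstalg / \nil \cong \g_\omega$, of the fact that $\beta \restrict$ preserves monomorphisms of presheaves, and because $\alpha \restrict$ takes values in nilclosed algebras, the map $\alpha \restrict (\iota)$ remains a monomorphism in $\unstalg / \nil$. The composite displayed above is therefore an isomorphism followed by a monomorphism in $\unstalg / \nil$, and so is itself a monomorphism there, as required. Canonicity is inherited from the canonicity of $\iota$, of the nilclosure map, and of the transformation $1 \rightarrow \beta \restrict$.

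The main obstacle I anticipate is purely one of variance rather than of mathematical content: $\beta \restrict$ is defined on the presheaf category $\g \op$, whereas $\alpha \restrict$ is built from $g\co  \unstalg \rightarrow \g$ by passing to opposite categories and then to the nilclosure. One must therefore check with care that the isomorphism furnished by \fullref{thm-comparison}, phrased there as the surjection $g\op H^* _E (X) \twoheadrightarrow \beta \restrict g\op H^* _E (X)$ being invertible, corresponds exactly to the assertion that the monomorphism $\alpha \restrict H^* _E (X) \hookrightarrow \alpha \restrict[\infty] H^* _E (X)$ is invertible. Once this single translation through \fullref{thm:hls} is made precise, no further computation is needed.
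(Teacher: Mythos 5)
Your proposal is correct and coincides with the paper's own argument: the paper proves the corollary with exactly this factorization, namely the nilclosure map $H^*_E(X) \rightarrow \alpha\restrict[\infty] H^*_E(X)$, the identification $\alpha\restrict H^*_E(X) \cong \alpha\restrict[\infty] H^*_E(X)$ furnished by \fullref{thm-comparison} (your map $\eta$), and the monomorphism $\alpha\restrict H^*_E(X) \hookrightarrow \alpha\restrict H^*(X)$ obtained from the injection-preservation of $\alpha\restrict$ recorded in \fullref{prop:alphan}. The one slip, harmless here, is in your side remark on variance: under $g\op$ an injection of unstable algebras corresponds to a \emph{surjection} of presheaves (by the nonlinear injectivity of \fullref{prop:non-lin-inj}), so the persistence of the monomorphism in $\unstalg/\nil$ is the translation of the fact that $\beta\restrict$ preserves surjections of presheaves, not monomorphisms.
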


\begin{proof}
Recall that the functor $\alpha \restrict[\infty]$ coincides with the
nil-localization functor and that there is a canonical morphism $H^*_E
(X) \rightarrow \alpha \restrict[\infty]H^*_E
(X)$ which induces an isomorphism in $\unstalg / \nil$. There is a commutative diagram 
$$
\xymatrix{
& 
H^*_E (X) 
\ar[d] \\
\alpha \restrict H^* _E (X) 
\ar[r]^\cong 
\ar@{^(->}[d]
&
\alpha \restrict [\infty]H^* _E (X) 
\ar@{^(->}[d]
\\
\alpha \restrict H^* (X) 
\ar[r]
&
\alpha \restrict [\infty]H^* (X), 
}
$$
 in which the top horizontal morphism is an isomorphism by \fullref{thm-comparison}. The left hand vertical
 morphism  is a monomorphism, since  $\alpha \restrict \co  \unstalg \rightarrow
\unstalg$ preserves injections. The result follows.
\end{proof}

\begin{rem}$\phantom{9}$
\begin{enumerate}
\item
In general, the
morphism $H^* _E (X) 
\rightarrow 
\alpha \restrict H^* (X)$ is not a monomorphism, since
 the unstable algebra $H^*_E (X)$ need not be nilclosed. For example, consider
the case $E= KU$, unitary $K$--theory and $X = \mathbb{C} P^n$. 
\item
 It is not clear  that the topological functor $X \mapsto H^* _E (X)$ extends in
general to a functor $\unstalg \rightarrow \unstalg / \nil$,
especially in the light of  \fullref{exam:proper-incl} below.
\end{enumerate}
\end{rem}

\subsection{Separation}

It is natural to seek a class of spaces for which the natural morphism from 
$H^* _E (X)$ to $\alpha \restrict H^* (X)$ is an isomorphism in
$\unstalg / \nil$. For example, the results of Green, Hunton and Schuster \cite{ghs}
concern classifying spaces of finite groups; in this case,
Hopkins, Kuhn and Ravenel \cite{hkr} provide a construction of enough elements in
$E^*$--cohomology so as  to obtain equality up to $F$--isomorphism.

\begin{lem}
\label{lem:E-separation}
Suppose that there exists a positive integer
$n$ such that the Hurewicz hypothesis $\hurewicz (B\field ^n)$ holds.
Then there exists a commutative diagram 
 $$
\xymatrix{
[B\field ^n, X] \ar@{->>}[r]
\ar@{->>}[d]
&
[B\field ^n, X]_E
\\
[B \field ^n , X]_H 
\ar@{->>}[ur]_{P_{H,E}}
}
$$
of surjections. 
\end{lem}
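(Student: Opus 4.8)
The plan is to build the diagonal map $P_{H,E}$ by inverting the comparison map $Q_E$, whose bijectivity is precisely what the Hurewicz hypothesis provides. First I would specialize \fullref{prop:E-Hurewicz} to the case $Z = B\field ^n$: since $\hurewicz (B\field ^n)$ is assumed, that proposition asserts that the canonical surjection $Q_E \co [B\field ^n, X]_E \rightarrow [B\field ^n, X]_{H_E}$ is a bijection. This is the one substantive input; everything else is formal.

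Next I would recall the surjection of \fullref{prop:HE}, namely $p_H \co [B\field ^n, X]_H \twoheadrightarrow [B\field ^n, X]_{H_E}$, which by construction factorizes $q_{H_E}$, so that $p_H \circ q_H = q_{H_E}$. I would then define
$$
P_{H,E} \co= Q_E^{-1} \circ p_H \co [B\field ^n, X]_H \rightarrow [B\field ^n, X]_E .
$$
Because $p_H$ is a surjection and $Q_E^{-1}$ is a bijection, $P_{H,E}$ is a surjection; the edges $q_E$ and $q_H$ of the square are already surjections by their definitions in \fullref{nota:E-HE}.

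It then remains to verify commutativity of the triangle, $P_{H,E} \circ q_H = q_E$, which is a short diagram chase. Substituting the definition of $P_{H,E}$ and using $p_H \circ q_H = q_{H_E}$ gives $P_{H,E} \circ q_H = Q_E^{-1} \circ q_{H_E}$; the commutative triangle of \fullref{prop:QE}, which reads $q_{H_E} = Q_E \circ q_E$, then collapses this to $Q_E^{-1} \circ Q_E \circ q_E = q_E$. I do not anticipate a genuine obstacle: once \fullref{prop:E-Hurewicz} supplies the invertibility of $Q_E$, the argument is purely a matter of rewriting composites. The only point deserving a moment's care is to track the directions of $p_H$ and $Q_E$, since both target the auxiliary set $[B\field ^n, X]_{H_E}$, which is exactly why inverting $Q_E$ and composing with $p_H$ is the natural construction.
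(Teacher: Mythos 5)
Your proposal is correct and is essentially the paper's own proof: both obtain the bijectivity of $Q_E \co [B\field^n, X]_E \rightarrow [B\field^n, X]_{H_E}$ from \fullref{prop:E-Hurewicz} applied to $Z = B\field^n$, and then define $P_{H,E} = Q_E^{-1} \circ p_H$ using the surjection $p_H$ of \fullref{prop:HE}, with commutativity coming from \fullref{prop:QE}. The only difference is that the paper leaves the concluding diagram chase implicit ("the result follows from the commutative diagram"), whereas you write it out explicitly.
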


\begin{proof}
The hypothesis implies that  $Q_E\co  [B \field^n, X] _E
\rightarrow [B\field^n, X] _{H_E}$ is an isomorphism by \fullref{prop:E-Hurewicz}. The result follows from the commutative diagram 
$$
\xymatrix{
[B \field ^n , X] \ar[r]
\ar[d]
&
[B \field ^n , X]_E
\ar[d]^{Q_E}_\cong 
\\
[B \field ^n , X]_H 
\ar[r]
\ar@{.>}[ur]^{P_{H,E}}
&
[B \field ^n , X]_{H_E}.
}
$$
\proved
\end{proof}

The following result gives a separation criterion which is sufficient to prove equality in $\unstalg/ \nil$ between the unstable algebras $H^* _E (X)$ and $\alpha \restrict H^* (X)$, under suitable hypotheses.

\begin{prop}
\label{prop:E-separation}
Suppose that there exists a positive integer
$n$ such that 
\begin{enumerate}
\item
the Hurewicz hypothesis $\hurewicz (B\field ^n)$ holds;
\item
$E$ satisfies the height--$n$ detection property.
\end{enumerate}
Let $X \in \topspace$ be a space for which the  morphism 
$$[B\field ^n, X] \rightarrow
\hom_{\unstalg} (H^* (X) , H ^* (B\field ^n)) $$
 is surjective.  

Suppose that the morphism 
$$
P_{H,E} \co  [B\field ^n, X]_H \rightarrow [B \field ^n ,X]_E
$$
 is an injection. Then there is an isomorphism $H^* _E (X)\cong _{\unstalg/\nil} \alpha \restrict H^* (X)$ in $\unstalg/ \nil$.
\end{prop}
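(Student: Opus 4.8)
The plan is to combine \fullref{cor-canon-F-mono}, which already produces the natural morphism $H^*_E(X) \rightarrow \alpha\restrict H^*(X)$ and shows it induces a monomorphism in $\unstalg/\nil$, with a surjectivity argument supplied by the injectivity of $P_{H,E}$. Since an isomorphism in $\unstalg/\nil$ is detected by the functor $g\op$ landing in $\g_\omega\op$ via \fullref{thm:hls}, I would reduce the whole statement to showing that the canonical morphism of presheaves of profinite sets
\[
g\op H^*_E(X) \rightarrow g\op \bigl(\alpha\restrict H^*(X)\bigr) = \beta\restrict g\op H^*(X)
\]
is an \emph{isomorphism} in $\g_\omega\op$. \fullref{thm-comparison} already identifies $g\op H^*_E(X)$ with $\beta\restrict g\op H^*_E(X)$, so after applying $\beta\restrict$ to the inclusion $H^*_E(X)\hookrightarrow H^*(X)$ (using that $\beta\restrict$ preserves injections, \fullref{prop:beta-prop}), the target monomorphism is already in hand; what remains is surjectivity.

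First I would unwind the definitions so that both sides are described in terms of $\hom$-sets. On the $H^*_E$ side, \fullref{cor-top-realization} gives $[BV,X]_{H_E} \cong \hom_{\unstalg}(H^*_E(X), H^*_E(BV))$, which is exactly $(g\op H^*_E(X))(V)$; on the $H^*$ side, \fullref{cor-top-realization} gives $[BV,X]_H \cong \hom_{\unstalg}(H^*(X), H^*(BV))$, which is $(g\op H^*(X))(V)$, and $\beta\restrict$ then collapses sections agreeing on all $j\co \field^n\rightarrow V$. The evaluation diagram of \fullref{prop:QE} and \fullref{lem:E-separation} organizes the relevant surjections $[B\field^n,X]\twoheadrightarrow [B\field^n,X]_H$ and $[B\field^n,X]\twoheadrightarrow[B\field^n,X]_E$, together with the factorization $P_{H,E}$. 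The key point is that surjectivity of the comparison presheaf morphism over an arbitrary $V$ is governed, via the height-$n$ detection machinery and the monomorphism $i_d e_d G\hookrightarrow G$ of \fullref{lem-key-hls}, by its behaviour over $\field^n$; thus I would localize the surjectivity question to $V=\field^n$.

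Next I would run the separation argument at $V=\field^n$. There $(\beta\restrict g\op H^*(X))(\field^n) = [B\field^n,X]_H$ (since $\field^n$ is a generating object, $\beta\restrict$ acts as the identity there, cf.\ part (4) of the $\beta$-lemma) and $(g\op H^*_E(X))(\field^n) = [B\field^n,X]_{H_E} \cong [B\field^n,X]_E$ by \fullref{cor-hurewicz-height}. Under these identifications the comparison morphism is precisely $P_{H,E}\co [B\field^n,X]_H \rightarrow [B\field^n,X]_E$ of \fullref{lem:E-separation}, which is surjective; the hypothesis that it is \emph{injective} upgrades this to a bijection at $\field^n$. Combining injectivity-at-$\field^n$ with the global monomorphism already established, and propagating from $\field^n$ to general $V$ via \fullref{lem-key-hls} and \fullref{prop:fund-beta}, yields that the comparison is an isomorphism of analytic presheaves. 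Applying the equivalence $\unstalg/\nil \cong \g_\omega$ of \fullref{thm:hls} then gives the desired isomorphism $H^*_E(X) \cong_{\unstalg/\nil} \alpha\restrict H^*(X)$.

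The main obstacle I expect is the passage from the bijection at the single object $\field^n$ to an isomorphism of presheaves over all $V\in\fdvs$: one must verify carefully that the injectivity of $P_{H,E}$ at $\field^n$, rather than at every $V$, suffices. This is exactly where the structure of $\beta\restrict$ is essential, since by construction sections of $\beta\restrict G$ are separated by their restrictions along maps $\field^n\rightarrow V$; the correct framework is \fullref{prop:fund-beta}, applied to the factorization of the comparison morphism through $\beta\restrict g\op H^*(X)$, which reduces monomorphy (and hence, together with the established surjection, isomorphy) over all $V$ to monomorphy after restriction to the subcategory $\fdvs\restrict$. Checking that the injectivity hypothesis on $P_{H,E}$ feeds correctly into this restricted monomorphism condition is the crux, and the rest is a formal combination of the cited propositions.
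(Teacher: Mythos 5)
Your proposal is correct and follows essentially the same route as the paper's own proof: one half comes from \fullref{cor-canon-F-mono}, the other half is reduced by \fullref{prop:fund-beta} to monomorphy over $\fdvs\restrict$, and that is settled at $\field^n$ by identifying the comparison map with $P_{H,E}$ via the Hurewicz--surjectivity identifications (\fullref{cor-top-realization}, \fullref{prop:E-Hurewicz}) and invoking its hypothesized injectivity. The only blemish is variance: the canonical morphism of presheaves runs $\beta \restrict g\op H^* (X) \rightarrow g\op H^*_E (X)$, not in the direction stated in your first paragraph, although your concrete argument at $\field^n$ (where the map is $[B\field^n, X]_H \rightarrow [B\field^n, X]_{H_E}$) already uses the correct direction, so this is a bookkeeping slip rather than a gap.
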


\begin{proof}
The morphism $H^* _E (X) \rightarrow \alpha \restrict H^* (X)$
corresponds to a morphism 
$$
\beta \restrict g \op H^* (X) \rightarrow g
\op H^*_ E (X),$$
and it is sufficient to show that this is a
monomorphism under the given hypothesis.  It is sufficient to show that the
morphism $g \op H ^* (X) \rightarrow g \op H^* _E (X) $ is a
monomorphism when evaluated on $\field ^n$ by \fullref{prop:fund-beta}.

The hypothesis upon $P_{H,E}$ implies that the morphism 
$
[B \field ^n, X]_H  \rightarrow 
[B \field ^n ,X] _{H_E} 
$
is an isomorphism. Moreover, the surjectivity of the Hurewicz
monomorphism implies that there is an isomorphism 
$ [B \field ^n, X]_H \cong \hom_{\unstalg} (H^* (X), H^* (B \field
^n))$ and an isomorphism $[B \field ^n, X]_{H_E} \cong \hom_{\unstalg} (H^*_E (X), H^*_E (B \field
^n))$. Hence the natural morphism 
$$
g \op H^* (X)  (\field ^n) \cong g \op H^*_E (X)(\field ^n)
$$ 
is an isomorphism, as required.
\end{proof}

\subsection[An example for E=KU]{An example for $E=KU$}

The following example  shows that  the monomorphism in $\unstalg /
\nil$ provided by \fullref{cor-canon-F-mono}  can be strict.

\begin{exam}
\label{exam:proper-incl}
Consider the Eilenberg-MacLane space $K (\field, 2) $; the results of Anderson
and Hodgkin imply that $K^* ( K (\field, 2))$ identifies with $K^*
(pt)$ \cite{ah}. Theorem 4.6 of \cite{hs} implies that the
subalgebras $$H^*_{\tiny\widehat{E(1)}} (X) \quad \text{and} \quad H^* _{KU} (X)$$ of $H^*
(X)$ coincide, hence \fullref{cor-canon-F-mono} applies  and there is a  monomorphism in $\unstalg / \nil$
$$
H^*
_{KU}(K(\field, 2))
\hookrightarrow 
\alpha \restrict[1] 
H^* (K (\field, 2)). $$
This morphism does not induce an isomorphism in $\unstalg / \nil$.
To see this,  it is sufficient to show that $\beta \restrict[1]  g \op H^* (K (\field, 2))$
is not trivial; this is straightforward. 
\end{exam}

\begin{rem}
The Eilenberg--MacLane spaces provide universal  examples  of the failure of the
$F$--monomorphism to be an $F$--isomorphism. It is this failure which provides an obstruction to extending the functor $H^* _E$ to an algebraic functor defined on $\unstalg$.
\end{rem}



\section{Filtrations of functors and group cohomology}
\label{Sec-filt-group-cohom}
\reset

This section reviews the results of \cite{ghs} from the functorial
point of view. This functorial presentation allows the generalization
of these results to the case
of Borel cohomology, which is presented in \fullref{sect-borel}.

\begin{nota}
 Throughout this section, let $G$ be a finite group and let $n$ be a nonnegative integer.
\end{nota}

\subsection{Quillen's theorem -- the functorial approach}

This section gives an overview of  the reinterpretation, due to Lannes, of
Quillen's results on the algebraic variety associated to the  cohomology of a finite
group. For convenience, references are given in \cite{s} rather than
in the original sources. A further convenient reference is the section
on group cohomology of \cite{dh}.

Recall the definition of the Quillen  category  associated to a finite group $G$. 

\begin{defn}
Let $\cala _p (G)$ denote the category with objects the elementary
abelian $p$--subgroups of $G$ and  morphisms the morphisms  of groups which are composites of
conjugation by an element of $G$ and the inclusion of subgroups.
\end{defn}

\begin{nota}
Let $\calc_p ^G$ denote the set of presheaves on the category $\cala_p
(G)$, namely functors $\cala_p (G) \op \rightarrow \set$.
\end{nota}

There is a forgetful functor $\cala_p (G) \rightarrow \fdvs$, hence
 the set of homomorphisms between vector spaces induces a
 functor $\hom ( -, -) \co  \fdvs \op \times \cala_p (G) \rightarrow
 \set$. Let $F\in \calc _p ^G$ be a presheaf, then the product $F \times \hom (-, -)$
 induces a functor 
 $$\cala_p (G ) \op \times \cala_ p (G) 
\rightarrow \presh[\fdvs],$$
 where $\presh[\fdvs]$ denotes the category of functors $\fdvs \op
 \rightarrow \set$. If $F$ takes values in the category of finite
 sets, this defines an object of $\g \op$.  

\begin{defn}
For $F \in \calc_p^G $ a functor which takes values in finite
sets, let  $\int_{\cala_p (G)} F $ be the functor in $\g
\op$ which is the coend of the functor $F \times \hom (-, -)$ \cite[Chapter IX, Section 6]{mcl}. Explicitly, this is the quotient
of the functor 
\[
V \mapsto \amalg _{E \in \cala_p (G) } F (E) \times  \hom (V, E)
\]
by the quotient relation which is induced for morphisms $j\co  E_1
\rightarrow E_2$ in $\cala_p (G)$ by identifying the images of the
morphisms
\[
\xymatrix{
& F(E_2) \times  \hom(V, E_1 )  
\ar[ld] _{1 \times j_*}
\ar[rd] ^{j^*\times 1}
\\
 F(E_2) \times  \hom(V, E_2 )  
&&
 F(E_1) \times  \hom(V, E_1 ).  
}
\]
\end{defn}

\begin{exam}$\phantom{9}$
\label{exam:Rep-FX}
\begin{enumerate}
\item
Let $F\in \calc_p^G $ be the constant functor sending each $E\in
\cala_p (G)$ to a singleton set. The associated functor $\int _{\cala
_p (G) } F $ identifies with the co\-limit 
$$\varinjlim_
{E \in \cala _p (G)} \hom
(- , E).$$
 This functor is isomorphic to the functor $V \mapsto \rep
(V, G) $, where  $\rep (V, G)$ is  the set of group  homomorphisms modulo the
equivalence relation induced by conjugation by elements of $G$ \cite[Corollary 3.8.8, Theorem 3.10.2]{s}. 
\item
For $X$ a finite $G$--CW--complex, let $F_X$ denote the functor sending $E \in \cala_p (G)$ to the set $\pi_0 (X^E) / C_G (E)$, where
$C_G (E)$ denotes the centralizer of $E$ in $G$. The associated
functor $\int _{\cala _p (G) } F_ X$ evaluated on $V\in \fdvs$
gives a set of equivalence classes with representatives of the form $(a , x) $, where $a$ is a morphism $V
\rightarrow G$ and $x\in \pi_0 (X ^{\mathrm{Im} (a)})$.
\end{enumerate}
\end{exam}

Quillen's theorem on the variety of the ring $H^* (BG)$ has the
following interpretation in the context of unstable algebras.

\begin{thm}
\llabel{thm-Quillen}
For $G$  a finite group and  $X$  a finite
$G$--CW--complex, there are isomorphisms of functors in $\g \op$:
\begin{eqnarray*}
g\op H^* (BG)
&\cong& 
\rep (-, G);
\\
g\op H^* (EG \times_ G X) 
&\cong& 
\int _{\cala _p (G) } F_X.
\end{eqnarray*}
\end{thm}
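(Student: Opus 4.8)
The plan is to obtain both isomorphisms from Lannes' computation of the $T$--functor applied to the cohomology of a Borel construction, the first isomorphism being the special case $X = \mathrm{pt}$ of the second. Throughout, recall that $g\op H^* (Y) (V) = \hom_{\unstalg} (H^* (Y), H^* (BV))$ and that the adjunction defining the $T$--functor $T_V$ gives a natural identification of this set with $\hom_{\unstalg} (T_V H^* (Y), \field)$, the set of $\field$--rational points of $T_V H^* (Y)$.

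For the first isomorphism I would invoke the reinterpretation, due to Lannes, of Quillen's theorem: for a finite group $G$ there is a natural bijection $\hom_{\unstalg} (H^* (BG), H^* (BV)) \cong \rep (V, G)$ (see \cite[Theorem 3.10.2]{s} and \cite{l}). Naturality in $V$ is built into the construction, giving the isomorphism $g\op H^* (BG) \cong \rep (-, G)$ in $\g\op$. Since $EG \times_G \mathrm{pt} = BG$ and $F_{\mathrm{pt}}$ is the constant functor with value a singleton set, \fullref{exam:Rep-FX}(1) identifies $\int_{\cala_p (G)} F_{\mathrm{pt}}$ with $\rep (-, G)$; thus the first isomorphism is exactly the case $X = \mathrm{pt}$ of the second.

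For the second isomorphism the essential input is Lannes' fixed--point theorem, which computes $T_V H^* (EG \times_G X)$ for a finite $G$--CW--complex $X$ as a product indexed by the homomorphisms $a \co V \rightarrow G$, the factor attached to $a$ being controlled by the cohomology of the fixed--point space $X^{\mathrm{Im} (a)}$. Passing to $\field$--rational points, I would read off a natural bijection between $\hom_{\unstalg} (H^* (EG \times_G X), H^* (BV))$ and the set of equivalence classes of pairs $(a, x)$, where $a \co V \rightarrow G$ is a homomorphism and $x \in \pi_0 (X^{\mathrm{Im} (a)})$, the equivalence being that induced by conjugation by elements of $G$. This is precisely the explicit description of $\int_{\cala_p (G)} F_X (V)$ recorded in \fullref{exam:Rep-FX}(2), so the final step is to match the two descriptions.

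The hard part will be the bookkeeping needed to identify the output of the fixed--point computation with the coend $\int_{\cala_p (G)} F_X$ naturally in $V$, and in particular to verify that the conjugation relation on pairs $(a, x)$ coincides with the relation built into the coend over the Quillen category $\cala_p (G)$. A subsidiary point demanding care is that $EG \times_G X$ need not be nilpotent, so the Hurewicz/Miller theorem \fullref{thm-Miller-conj} cannot be applied directly to pass between homotopy classes of maps and morphisms in $\unstalg$; the computation must instead be carried out algebraically through $T_V$, where the finiteness of $G$ and of the $G$--CW structure on $X$ guarantee the decomposition of $T_V$ over conjugacy classes of homomorphisms $V \rightarrow G$.
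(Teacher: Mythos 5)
Your proposal is sound, but note that the paper does not actually prove \fullref{thm-Quillen}: it presents the theorem as known --- Quillen's theorem in Lannes' reinterpretation --- citing \cite{s} and \cite{dh}, and the only hint of a derivation is the remark that the second isomorphism is equivalent, via \fullref{prop:End-desc}, to Quillen's classical statement that $H^*(EG\times_G X)\rightarrow K(F_X)$ is an $F$--isomorphism. The paper's implicit route is therefore: classical Quillen stratification in its $F$--isomorphism form, plus the fact that $g$ inverts $F$--isomorphisms (\fullref{thm:hls}), plus the end/coend comparison of \fullref{prop:End-desc}. Your route instead takes Lannes' fixed-point theorem computing $T_V H^*(EG\times_G X)$ as the single main input and passes to $\field$--rational points; this is precisely how the paper's cited sources establish the result, and the steps you outline are correct: the adjunction identification $\hom_{\unstalg}(H^*(Y),H^*(BV))\cong\hom_{\unstalg}(T_V H^*(Y),\field)$, the reduction of the first isomorphism to the case $X=\mathrm{pt}$, and the decomposition of $T_V H^*(EG\times_G X)$ as $\prod_{[\rho]\in\rep(V,G)} H^*\bigl(EC_G(\rho)\times_{C_G(\rho)} X^{\mathrm{Im}\rho}\bigr)$, whose $\field$--points are exactly the classes of pairs $(a,[x])$ modulo $G$--conjugation; the residual bookkeeping you defer is essentially the content of the paper's \fullref{prop:gop-model} (and of \fullref{prop:End-desc}), so it is genuinely routine. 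Two comparative remarks: (i) both routes hang the real content on one big cited theorem (Quillen's stratification versus Lannes' fixed-point theorem), so neither is more elementary, but yours stays entirely inside the Henn--Lannes--Schwartz framework and produces the statement in $\g\op$ directly, whereas the paper's needs only the classical theorem plus formal dictionary; (ii) your caution about nilpotency is not pedantry but avoids a genuine circularity --- surjectivity of the Hurewicz morphism for $EG\times_G X$ (\fullref{cor:Hurw-surj}) is deduced in the paper \emph{from} \fullref{thm-Quillen}, so no argument passing through homotopy classes of maps $B\field^n\rightarrow EG\times_G X$ could legitimately be used here, and working algebraically through $T_V$ is the right move.
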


The second statement is equivalent to the
description in terms of a limit over the Quillen category given in \fullref{prop:End-desc}. 

\begin{defn}
For $F \in \calc _p ^G$  a functor $\cala_p (G) \op \rightarrow \set$
which takes values in finite sets,  let $K (F) \in \unstalg$ denote the
unstable algebra over the Steenrod algebra given by
\[
K(F) \co =  \hom_{\calc_p ^G} (F, H^* (B-))
,
\]
where $H^* (B-)\co  E \mapsto H^* (BE)$ is regarded as a
functor by forgetting the unstable algebra structure. (This
definition is equivalent to defining a suitable end \cite[Chapter IX,
Section 5]{mcl}). The structure of an unstable algebra is induced by
the unstable algebra structure of $H^* (BE)$. 
\end{defn}

\begin{prop}
\label{prop:End-desc}
For $F  \in \calc _p^G$  a functor $\cala_p (G) \op \rightarrow \set$
which takes values in finite sets, there is an isomorphism of
functors in $\g \op$
\[
g\op K (F) 
\cong 
\int_{\cala_p(G)} F.
\]
\end{prop}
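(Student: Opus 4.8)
The plan is to compute both sides as functors of $F$ and to reduce to the case of a representable presheaf, exploiting the fact that each construction is determined by its values on representables together with its behaviour under colimits. First I would rewrite $K(F)$ in a form adapted to $g$. Since $F$ takes values in finite sets, the natural transformations defining $K(F)$ form an end and the copowers $F(E)$ become finite products, giving a natural identification of unstable algebras
\[
K(F) \cong \int_{E \in \cala_p(G)} H^* (BE)^{F(E)},
\]
the end being formed in $\unstalg$. By the density (co-Yoneda) theorem, every $F \in \calc_p^G$ with finite values is a finite colimit of representable presheaves $\hom_{\cala_p(G)}(-, E_0)$, and both $F \mapsto \int_{\cala_p(G)} F$ and $F \mapsto g\op K(F)$ are functorial in $F$. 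The strategy is therefore to verify the isomorphism on representables, to show that both functors carry a chosen colimit presentation of $F$ to the corresponding colimit in $\g\op$, and to conclude by naturality.

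For the representable case, let $F = \hom_{\cala_p(G)}(-, E_0)$. The Yoneda lemma gives $K(F) \cong H^* (BE_0)$, so that
\[
g\op K(F)(V) = \hom_{\unstalg}(H^* (BE_0), H^* (BV)) \cong \hom (V, E_0),
\]
the final identification being the computation of Lannes underlying the description of $\rep(-,-)$ in \fullref{exam:Rep-FX} (for the abelian group $E_0$ there is no conjugation). On the other side, the coend $\int_{\cala_p(G)} \hom_{\cala_p(G)}(-, E_0)$ is, by the co-Yoneda formula applied to the covariant functor $E \mapsto \hom (V, E)$, naturally isomorphic to $\hom (V, E_0)$. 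These two identifications agree and are natural in $E_0$ and in $V$.

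It remains to treat colimits. The coend $\int_{\cala_p(G)} (-)$ is manifestly cocontinuous in $F$, since coends commute with colimits and products with finite sets distribute over colimits in $\set$. For the composite $g\op K(-)$, the functor $K = \hom_{\calc_p^G}(-, H^* (B-))$ converts a presentation of $F$ as a coequalizer of coproducts of representables into a presentation of $K(F)$ as an equalizer of products of the algebras $H^* (BE)$. One then checks that $g(-)(V) = \hom_{\unstalg}(-, H^* (BV))$ carries these limits back to the corresponding colimits of sets. Finite products present no difficulty: because $H^* (BV)$ is connected, $H^0 (BV) = \field$ admits no nontrivial idempotents, so any algebra map out of a finite product factors through exactly one projection, and hence $g$ sends such products to disjoint unions, matching the coproduct structure of the coend.

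The main obstacle is the coequalizer step, that $\hom_{\unstalg}(-, H^* (BV))$ sends the equalizer presenting $K(F)$ to the coequalizer presenting the coend, since $\hom$ out of a limit is not formally a colimit. I expect to resolve this by making the comparison map explicit rather than invoking abstract exactness: a class in the coend represented by $(x, \phi)$, with $x \in F(E)$ and $\phi \in \hom (V, E) = \hom_{\unstalg}(H^* (BE), H^* (BV))$, determines via the Yoneda map $\hom_{\cala_p(G)}(-, E) \to F$ classified by $x$ an evaluation $K(F) \rightarrow H^* (BE)$, whose composite with $\phi^*$ yields a morphism $K(F) \rightarrow H^* (BV)$. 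Well-definedness on equivalence classes follows from the functoriality of $H^* (B-)$ together with the coend relations, and bijectivity then follows from the representable case and the idempotent argument above. Naturality in $V$ is immediate from the construction, giving the required isomorphism in $\g\op$.
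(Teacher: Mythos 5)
Your reduction-to-representables strategy is viable in outline, and several of its pieces are correct: the rewriting $K(F) \cong \int_E H^*(BE)^{F(E)}$, the representable case (Yoneda, co-Yoneda and Lannes' computation $\hom_{\unstalg}(H^*(BE_0),H^*(BV))\cong \hom(V,E_0)$), the cocontinuity of the coend in $F$, the idempotent argument identifying $\hom_{\unstalg}(\prod_i A_i, H^*(BV))$ with $\coprod_i \hom_{\unstalg}(A_i,H^*(BV))$, and the well-definedness of your explicit comparison map. The genuine gap is at exactly the step you flag as the main obstacle, and your resolution of it --- ``bijectivity then follows from the representable case and the idempotent argument above'' --- is not a proof: neither half of bijectivity follows from those two ingredients. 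For surjectivity, an arbitrary morphism $\theta\co K(F)\rightarrow H^*(BV)$ in $\unstalg$ is a map \emph{out of a limit}, whereas your machinery only computes maps out of the products $\prod_{(E,x)}H^*(BE)$; to say anything about $\theta$ you must first extend it along the monomorphism $K(F)\hookrightarrow \prod_{(E,x)}H^*(BE)$, and such an extension exists precisely because $H^*(BV)$ is injective in the nonlinear sense (\fullref{prop:non-lin-inj}, ie \cite[Corollary 3.8.7]{s}). This is a deep property of $H^*(BV)$, it appears nowhere in your argument, and no formal manipulation of colimits can substitute for it.

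For injectivity, you must show that two morphisms $\phi^*\circ \mathrm{ev}_{(E,x)}$ and $(\phi')^*\circ \mathrm{ev}_{(E',x')}$ which agree on the subalgebra $K(F)$ are identified by the coend relations; the representable case computes maps out of the ambient product, not out of $K(F)$, so it says nothing about this either. A further nonformal input is needed, and there are two standard ways to supply it. One is Lannes' functor: $T_V$ is exact and monoidal, hence commutes with the finite limit defining $K(F)$, and $T_V H^*(BE)\cong \prod_{\hom(V,E)}H^*(BE)$; taking degree-zero parts gives $(T_V K(F))^0 \cong \field^{S}$ with $S = \bigl(\int_{\cala_p(G)}F\bigr)(V)$, and since $\hom_{\unstalg}(K(F),H^*(BV)) \cong \hom_{\unstalg}(T_V K(F),\field)$ is the set of ring maps $(T_V K(F))^0 \rightarrow \field$, one concludes. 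The other is the Henn--Lannes--Schwartz adjunction: $a\op$ is left adjoint to $g\op$, hence preserves colimits, and carries $\hom(-,E)=g\op H^*(BE)$ to $H^*(BE)$, so $a\op\bigl(\int_{\cala_p(G)}F\bigr)\cong K(F)$; the unit $\int_{\cala_p(G)}F \rightarrow g\op a\op\bigl(\int_{\cala_p(G)}F\bigr)$ is then an isomorphism because the coend is analytic. The paper's own (one-line) proof defers precisely this content to its citation of \cite[Corollary 3.8.8]{s}; your outline reconstructs the formal scaffolding around that input but omits the substance itself.
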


\begin{proof}
This is a basic exercise using the definition of the functor
$g\co  \unstalg \rightarrow \g$, together with the definition of the
respective end and coend. (See \cite[Corollary 3.8.8]{s}).
\end{proof}

\subsection{Chern subrings for group cohomology}

Theorem 0.1 of \cite{ghs} and Corollary 4.2 of \cite{hs}
 together imply the following result, in which $E(n)$ denotes the
 height--$n$ Johnson--Wilson spectrum.

\begin{thm}
\llabel{thm-group-cohom}
 For $G$ a finite group and  $n$  a nonnegative integer, the
 unstable algebra $\chern[E(n)]^*  (BG) $ has associated functor  in $\g \op$
\[
g\op \chern[E(n)]^* (BG)
\cong 
\beta \restrict 
\rep (-, G).
\]
\end{thm}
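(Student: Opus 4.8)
The plan is to combine the purely algebraic identification of $g\op \chern[E(n)]^* (BG)$ in terms of the functor $\beta\restrict$ with the already-established computation of $g\op H^* (BG)$. The final statement is the even-space, $\chern^*$-flavoured analogue of \fullref{thm-comparison} specialized to the classifying space $X = BG$; so the strategy is first to produce, for $X = BG$, an isomorphism $g\op \chern[E(n)]^* (BG) \cong \beta\restrict g\op \chern[E(n)]^* (BG)$ via the $\chern^*$-variant of \fullref{thm-comparison}, and then to feed in the additional input of \cite{ghs} and \cite{hkr} that makes the height-$n$ approximation \emph{sharp} for classifying spaces, so that the approximating functor $\beta\restrict g\op \chern[E(n)]^* (BG)$ may be replaced by $\beta\restrict$ applied to $g\op H^* (BG) \cong \rep(-,G)$.

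First I would verify the hypotheses of the even-space variants. By \fullref{exam:Kn} the spectrum $\widehat{E(n)}$ satisfies $\hurewicz[\widehat{E(n)}](B\field^n)$, and by \fullref{prop:hkr}/\fullref{exam-cohom-theories} the relevant spectra satisfy the height-$n$ detection property; the even-space variants noted in the remarks following \fullref{prop:E-Hurewicz} and the definition of height-$n$ detection apply verbatim. Since $BG$ is of finite type, connected, nilpotent with finite fundamental group, \fullref{thm-Miller-conj} gives surjectivity (indeed bijectivity) of the Hurewicz morphism $[BV, BG] \rightarrow \hom_{\unstalg}(H^*(BG), H^*(BV))$, so \fullref{cor-top-realization} and the $\chern^*$-version of \fullref{thm-comparison} apply to yield $g\op \chern^*(BG) \cong \beta\restrict g\op \chern^*(BG)$.

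The substantive step is the passage from $\beta\restrict g\op \chern[E(n)]^*(BG)$ to $\beta\restrict \rep(-,G)$. The point of Green, Hunton and Schuster \cite{ghs}, using the generalized character theory of Hopkins, Kuhn and Ravenel \cite{hkr}, is that for $X = BG$ the separation criterion of \fullref{prop:E-separation} is met: there are \emph{enough} elements in $E^{2*}$-cohomology of $BG$ that the map $P_{H,E}$ on $[B\field^n, BG]_H \rightarrow [B\field^n, BG]_E$ (in its even-space form) is injective. Consequently $g\op \chern[E(n)]^*(BG)(\field^t) \cong g\op H^*(BG)(\field^t)$ for $t \leq n$, i.e.\ the two presheaves agree after restriction to $\fdvs\restrict$. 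Since $\beta\restrict$ depends only on this restricted data, one concludes $\beta\restrict g\op \chern[E(n)]^*(BG) \cong \beta\restrict g\op H^*(BG)$. Finally, substituting the Quillen identification $g\op H^*(BG) \cong \rep(-,G)$ from \fullref{thm-Quillen} gives the stated isomorphism.

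The main obstacle is the even-space bookkeeping in the separation step: I would need to confirm that the sharpness input of \cite{ghs}--\cite{hkr}, which concerns $E^{2*}(BG)$ and the Chern subring $\chern^*$ rather than the full $E^*$ and $H^*_E$, produces injectivity of the even-space $P_{H,E}$, and that the resulting agreement of presheaves is exactly agreement on the subcategory $\fdvs\restrict$ that controls $\beta\restrict$. Everything else is a formal translation through the functors $g\op$ and $\beta\restrict$ and the equivalence of \fullref{thm:hls}; the genuinely nonformal content is imported wholesale from the cited computation of $\chern[E(n)]^*(BG)$ up to $F$-isomorphism.
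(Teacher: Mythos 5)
Your strategy is the paper's own: apply the Chern--subring variant of \fullref{thm-comparison} to $BG$, use the Hopkins--Kuhn--Ravenel input (packaged as injectivity of $P_{H,E}$ in \fullref{prop:E-separation}) to make the approximation sharp, and conclude via \fullref{thm-Quillen}. However, two steps are genuinely defective as written. First, $BG$ is \emph{not} a nilpotent space for a general finite group $G$ (a $K(G,1)$ is nilpotent precisely when the group $G$ is nilpotent), so \fullref{thm-Miller-conj} does not give the surjectivity of the Hurewicz morphism $[BV,BG]\rightarrow \hom_{\unstalg}(H^*(BG),H^*(BV))$. That surjectivity is Lannes' theorem for classifying spaces of finite groups; inside this paper it is available from \fullref{thm-Quillen} together with the classical identification $[BV,BG]\cong\rep(V,G)$, or as the case $X=\mathrm{pt}$ of \fullref{cor:Hurw-surj}, but not from the nilpotent-space statement you cite.

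Second, you conflate $E(n)$ with $\widehat{E(n)}$. Every hypothesis you verify --- the Hurewicz hypothesis of \fullref{exam:Kn} and height--$n$ detection via \fullref{hyp:E-hkr} and \fullref{prop:hkr} --- holds for the $I_n$--complete theory $\widehat{E(n)}$; the uncompleted Johnson--Wilson spectrum $E(n)$, with coefficient ring $\zed_{(p)}[v_1,\dots,v_n,v_n^{-1}]$, is not complete local, so neither result applies to it. Your argument therefore yields $g\op\chern[\widehat{E(n)}]^*(BG)\cong\beta\restrict\rep(-,G)$, whereas the theorem asserts this for $\chern[E(n)]^*(BG)$; the missing step --- precisely the second half of the paper's proof --- is the comparison of $g\op\chern[\widehat{E(n)}]^*(BG)$ with $g\op\chern[E(n)]^*(BG)$, which is imported from \cite{hs}. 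A smaller point: the principle that ``$\beta\restrict$ depends only on the restriction to $\fdvs\restrict$'' is false for general presheaves, since $\beta\restrict F(V)$ is a quotient of $F(V)$ for \emph{every} $V$; what rescues your deduction is that the two presheaves are related by a natural surjection $g\op H^*(BG)\rightarrow g\op\chern^*(BG)$ (surjective by \fullref{prop:non-lin-inj}) which is injective on $\fdvs\restrict$, so that \fullref{prop:fund-beta} applies --- this is exactly the content of \fullref{prop:E-separation}, so citing its conclusion directly is the cleaner route.
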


\begin{proof}
(Indications) \fullref{thm-comparison} implies that there is a
surjection  in $\g \op$
$$g\op \chern[\widehat{E(n)}]^* (BG)\twoheadrightarrow
\beta \restrict \rep (-, G),
$$
and this surjection is an isomorphism \cite{hkr}. The remainder of the proof  consists of the 
comparison between $g\op \chern[\widehat{E(n)}]^* (BG)$ and $g\op
\chern[{E(n)}]^* (BG)$ \cite{hs}.
\end{proof}

\subsection{Interpretation in terms of categories}
\label{sect:cat-interpret}

The following category, introduced by Green and Leary \cite{gl},  generalizes the  category $\cala_p(G)$ of
elementary abelian $p$--subgroups of $G$. 

\begin{defn}
Let $\acat (G)$ denote the category with the same objects as $\cala_p
(G)$  and morphisms the inclusions $V \hookrightarrow W$
such that, for each morphism $\field ^n \rightarrow V$, the restricted
morphism between subgroups of $G$  is induced by conjugation by an
element of $G$.
\end{defn}

The following Proposition is a generalization of the relation between
the colimit over the Quillen category $\cala_p (G)$ of $\hom (-, W)$  and the functor $\rep (-, G) $:

\begin{prop}
\llabel{prop-colimit}
For $G$  a finite group and  $n \geq 0$ an integer, there is a
natural isomorphism in $\g \op$
\[\varinjlim_{W \in \acat (G)}
\hom (-, W) 
\cong 
\beta \restrict
\rep (-, G).  
\]
\end{prop}

\begin{proof}
(Indications) There is an embedding of  categories $\cala_p (G)
\hookrightarrow \acat (G)$ and the categories have the same objects,
hence there is a surjection 
$$\rep (-, G) \twoheadrightarrow \lim_{W \in
  \acat (G)} \hom (-, W).
$$
 It is straightforward to verify that this
factors across the canonical surjection from $\rep (-, G)$ onto $\beta
\restrict \rep (-, G)$. 

In the other direction, for an object $W$ of $\acat (G)$, there is an
induced morphism $\hom (-, W) \rightarrow \rep (-, G) \rightarrow
\beta \restrict \rep (-, G)$. This morphism induces a morphism  $\lim_{W \in
  \acat (G)} \hom (-, W) \rightarrow \beta \restrict \rep (-, G)$. 

These morphisms induce the required isomorphism of functors. 
\end{proof}

This  recovers  the main result of \cite{ghs}:

\begin{thm}
For $G$ a finite group and $n \geq 0$ an integer, there exists a
natural $F$--isomorphism
\[
\chern[E(n)]^* (BG) 
\rightarrow 
 \varprojlim_{W \in  \acat (G)} H^* (BW).
\]
 \end{thm}



\section{Borel cohomology}
\label{sect-borel}

Throughout this section, let $G$ be a finite group, let $X$ be a
finite $G$--CW--complex, and let $n$ denote a fixed nonnegative integer. For clarity, the prime $p$ is suppressed from the notation. 

\subsection{Generalized Chern rings for equivariant cohomology}
The arguments of Green, Hunton and Schuster \cite{ghs} generalize to prove the following
Theorem, using the generalized character theory of Hopkins, Kuhn and Ravenel
\cite{hkr}. There
is an analogous result for the Chern subring, which is not stated here
explicitly.

\begin{thm}
\llabel{thm-equivariant-En}
Let $E$ be a complex-oriented theory which satisfies
\fullref{hyp:E-hkr} and the Hurewicz hypothesis $\hurewicz[E]
(B\field^n_p)$, for  a nonnegative integer $n$.  Then the unstable algebra  $H^*_E (EG\times _G X) $ is $F$--isomorphic to 
$\alpha \restrict H^* (EG\times_ G X)$.
\end{thm}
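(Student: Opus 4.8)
The plan is to verify the separation criterion of \fullref{prop:E-separation} for the Borel construction $EG \times_G X$; once this is in place the asserted $F$--isomorphism is immediate. Since $E$ is complex--oriented and satisfies \fullref{hyp:E-hkr}, \fullref{prop:hkr} shows that $E$ enjoys the height--$n$ detection property, while the Hurewicz hypothesis $\hurewicz[E](B\field^n)$ is assumed outright. Thus both numbered hypotheses of \fullref{thm-comparison}, \fullref{cor-canon-F-mono} and \fullref{prop:E-separation} hold, and what remains is to supply the space--level hypotheses for $EG \times_G X$ and to establish the injectivity of the comparison map $P_{H,E}$.

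First I would check that the Hurewicz homomorphism $$[BV, EG\times_G X] \rightarrow \hom_{\unstalg}(H^*(EG\times_G X), H^*(BV))$$ is surjective for every $V \in \fdvs$. By \fullref{thm-Quillen} the target is identified with the value $(\int_{\cala_p(G)} F_X)(V)$, whose elements are represented, as in \fullref{exam:Rep-FX}, by pairs $(a,x)$ with $a\co V \to G$ a homomorphism and $x \in \pi_0(X^{\mathrm{Im}(a)})$. Each such pair is realized topologically by the composite $BV \to B\,\mathrm{Im}(a) \times X^{\mathrm{Im}(a)} \to EG\times_G X$, in which the first map is $Ba$ paired with a chosen point of the component $x$ and the second is induced by the inclusions $\mathrm{Im}(a) \hookrightarrow G$ and $X^{\mathrm{Im}(a)} \hookrightarrow X$. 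This surjectivity permits the application of \fullref{cor-canon-F-mono}, which furnishes the canonical morphism $H^*_E(EG\times_G X) \rightarrow \alpha\restrict H^*(EG\times_G X)$ and shows it to be an $F$--monomorphism.

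The main obstacle is to promote this $F$--monomorphism to an $F$--isomorphism, and by \fullref{prop:E-separation} this reduces to proving that $$P_{H,E}\co [B\field^n, EG\times_G X]_H \rightarrow [B\field^n, EG\times_G X]_E$$ is injective. This is exactly the step requiring the generalized character theory of Hopkins, Kuhn and Ravenel \cite{hkr}. By \fullref{thm-Quillen} the source is the set of conjugacy classes of pairs $(a,x)$ with $a\co \field^n \to G$, and, because $p^{-1}E^*$ is nonzero, \cite{hkr} computes the localized Borel cohomology $p^{-1}E^*(EG\times_G X)$ as a ring of functions on the matching set of commuting--tuple and fixed--point data. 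It follows that the $E^*$--cohomology classes separate the pairs $(a,x)$ precisely as the singular cohomology classes do, so that $P_{H,E}$ is injective. This is the generalization, from the case $X = \mathrm{pt}$ treated by Green, Hunton and Schuster \cite{ghs} (where $EG\times_G X = BG$), to an arbitrary finite $G$--CW--complex.

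Combining these inputs, \fullref{prop:E-separation} yields the isomorphism $H^*_E(EG\times_G X) \cong_{\unstalg/\nil} \alpha\restrict H^*(EG\times_G X)$, which is the required $F$--isomorphism. The genuinely substantive content is concentrated in the injectivity of $P_{H,E}$; the surjectivity of the Hurewicz homomorphism and the two applications of the results of the preceding sections are formal.
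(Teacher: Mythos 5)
Your overall strategy is exactly the paper's: apply \fullref{prop:E-separation} to $EG\times_G X$, checking (i) surjectivity of the Hurewicz morphism and (ii) injectivity of $P_{H,E}$, with height--$n$ detection supplied by \fullref{prop:hkr}. Your treatment of (i) is sound and coincides with the paper's, which packages the pairs $(a,x)$ into the $G\times\mathrm{Aut}(\field^n)$--set $\mathcal{Y}_n$ and proves the surjectivity in \fullref{cor:Hurw-surj}.

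The gap is in (ii). The sentence ``it follows that the $E^*$--cohomology classes separate the pairs $(a,x)$ precisely as the singular cohomology classes do'' is not a consequence of \fullref{thm:thmC}; it is the substantive claim to be proved, and two genuine issues stand between Theorem C and that conclusion. First, there is a mismatch of indexing data: the class functions of \cite{hkr} live on $\mathrm{Fix}_{n}(G,X)=\amalg_{a\in\hom(\zed_p^n,G)}X^{\mathrm{Im}(a)}$, indexed by \emph{continuous homomorphisms of $\zed_p^n$}, whose images are arbitrary finite abelian $p$--subgroups, and involving the \emph{whole} fixed--point spaces; the set you must separate, $\mathcal{Y}_n/G$, involves only elementary abelian images and only $\pi_0$ of the fixed points. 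The paper bridges this with \fullref{lem:surj-Y}, a surjection $Cl_{n}(G,X;L(E^*))\twoheadrightarrow L(E^*)\otimes_{E^*}E^*(\mathcal{Y}_{n})^G$ built from restriction to elementary abelian subgroups and a retraction onto path components; the target is then a free $L(E^*)$--module with basis $\mathcal{Y}_n/G$, and \fullref{prop:gammabar-mono} shows that the evaluation functionals attached to distinct orbits are distinct because each factors through the projection onto its own basis summand. Second, Theorem C only computes $E^*(EG\times_G X)$ after inverting $p$ or base changing to $L(E^*)$, whereas $P_{H,E}$ concerns maps of honest $E^*$--algebras $E^*(EG\times_G X)\to E^*(B\field^n)$; to descend one needs faithful flatness of $L(E^*)$, which is exactly how the paper reduces the injectivity of $\gamma$ to a statement over $L(E^*)$. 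Neither step is automatic, and together they constitute precisely the new content of the Borel--equivariant case beyond the case $X=\mathrm{pt}$ of \cite{ghs}: your proposal identifies the right tool and the right reduction, but omits the argument at the one point where the theorem is not formal.
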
 

\begin{proof}
The proof is an application of \fullref{prop:E-separation} to the
space $EG\times _G X$, which  involves the following steps:
\begin{enumerate}
\item
The Hurewicz morphism 
$$[B \field ^n , EG \times _ G X ] 
\rightarrow \hom_{\unstalg} (H^* (EG \times _G X), H^* (B \field
^n))$$
is surjective;
\item
The morphism which corresponds to $P_{H,E}$, 
 $$
g\op H^* (EG \times _G X) (\field^n) \cong [B \field ^n, EG \times _GX]_ H 
\rightarrow 
[B \field ^n, EG \times _GX]_ E,
$$
 is injective. 
\end{enumerate}
These statements are proved in \fullref{subs:surjectivity} and \fullref{subs:Separation}.
\end{proof}

This provides the following generalization of  \fullref{thm-group-cohom} with respect to $E(n)$. 

\begin{cor}
The unstable algebra  $\chern[E(n)]^*  (EG\times _G X) $ is
 $F$--isomorphic to \break
$\alpha \restrict H^* (EG\times_ G X)$, for $n$ a nonnegative integer. 
\end{cor}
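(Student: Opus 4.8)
The plan is to deduce this Corollary from the even-space variant of \fullref{thm-equivariant-En}, applied to the completed Johnson--Wilson spectrum $\widehat{E(n)}$, together with the comparison between the completed and uncompleted theories; this mirrors the final step of the proof of \fullref{thm-group-cohom}.

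First I would check that $E = \widehat{E(n)}$ meets the hypotheses of the Chern-subring version of \fullref{thm-equivariant-En}. The spectrum $\widehat{E(n)}$ is complex-oriented and satisfies \fullref{hyp:E-hkr} by \fullref{exam-cohom-theories}, and the Hurewicz hypothesis $\hurewicz[\widehat{E(n)}](B\field^n)$ holds by \fullref{exam:Kn}. Since the cohomology of an elementary abelian $p$--group with coefficients in a complex-oriented theory is concentrated in even degrees, the even height--$n$ detection and even Hurewicz conditions coincide here with their unrestricted counterparts; thus the even-space variant of \fullref{thm-equivariant-En} applies and produces an $F$--isomorphism
$$
\chern[\widehat{E(n)}]^*(EG \times_G X) \cong_F \alpha \restrict H^*(EG \times_G X).
$$

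It then remains to pass from the completed theory $\widehat{E(n)}$ to the uncompleted Johnson--Wilson theory $E(n)$. For this I would invoke \cite{hs}, which identifies $\chern[\widehat{E(n)}]^*$ and $\chern[E(n)]^*$ as the same subalgebra of $H^*(EG \times_G X)$; this is precisely the comparison already used to conclude \fullref{thm-group-cohom}, now applied to the Borel construction $EG \times_G X$ in place of $BG$. Substituting this identification into the displayed $F$--isomorphism gives the claim. I expect the main obstacle to be the even-space variant of \fullref{thm-equivariant-En} itself: the text only asserts its existence, so a complete argument must rework the surjectivity and separation steps of the proof of \fullref{thm-equivariant-En} (and hence the generalized character theory of \cite{hkr}) with $E^{2\bullet}$ replacing $E^\bullet$ and $\chern^*$ replacing $H^*_E$ throughout.
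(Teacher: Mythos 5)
Your proposal is correct and is essentially the paper's own argument: both proofs apply \fullref{thm-equivariant-En} to the completed theory $\widehat{E(n)}$ (its hypotheses holding by \fullref{exam-cohom-theories} and \fullref{exam:Kn}) and then cite Theorem 4.1 of \cite{hs} to replace $\widehat{E(n)}$ by $E(n)$, exactly as in the final step of \fullref{thm-group-cohom}. The only divergence is where the full-subring-versus-Chern-subring bookkeeping is placed: the paper invokes \fullref{thm-equivariant-En} as stated (for $H^*_{\widehat{E(n)}}$) and lets the \cite{hs} comparison carry the passage to $\chern[E(n)]^*$, whereas you route through the even-space variant of \fullref{thm-equivariant-En}, which (as you rightly note) the paper asserts but never proves, so the two proofs leave the same amount of work implicit.
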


\begin{proof}
\fullref{thm-equivariant-En} applies to the complete
Johnson--Wilson theory $\widehat{E(n)}$, by the results of \cite{ghs}
stated here in \fullref{exam:Kn}.  Then  Theorem 4.1 of
\cite{hs} applies to replace the complete Johnson--Wilson theory
by  $E(n)$. 
\end{proof}

\subsection{Surjectivity of the Hurewicz morphism} \label{subs:surjectivity}This section
establishes that the Hurewicz morphism 
$$
[B \field ^n , EG \times _ G X ] 
\rightarrow \hom_{\unstalg} (H^* (EG \times _G X), H^* (B \field
^n))$$
 is surjective and gives an explicit model for the right hand side.

\begin{defn}
Let $\mathcal{Y}_{n} $ denote the left $G$--, right $\mathrm{Aut}(\field ^n)$--set  with elements pairs $(\alpha , [x])$, where $\alpha \co  \field ^n  \rightarrow G$ is a group homomorphism and $[x ] \in \pi_ 0 (X^{\mathrm{Im}(\alpha)})$. The left $G$--action is via conjugation on $\hom (\field ^n  , G) $ and is  induced via the restricted action $g\co  X^{\mathrm{Im}(\alpha)} \rightarrow  X^{\mathrm{Im}(g \alpha g^{-1})}$; the right $\mathrm{Aut}(\field ^n)$--action is via the canonical action upon $\hom (\field ^n  , G) $.
\end{defn}

\begin{lem}
There is a map of sets $\mathcal{Y}_n \rightarrow [B \field ^n , EG
\times _G X] $ which sends a pair  $(\alpha , [x])$ to the homotopy
class of  a morphism
$$
B \field^n \rightarrow B \field^n \times X^{\mathrm{Im}(\alpha)} \rightarrow EG \times _G X,
$$
where the first morphism is induced by $[x]$ and the second morphism
is induced by $\alpha$. 
\end{lem}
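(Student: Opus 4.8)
The plan is to construct the map explicitly and then verify that the stated construction is well-defined on equivalence classes --- that is, independent of the choices of representatives $(\alpha, [x])$ and of the various identifications made along the way. First I would fix a pair $(\alpha, [x])$ with $\alpha\co \field^n \rightarrow G$ a homomorphism and $[x] \in \pi_0(X^{\mathrm{Im}(\alpha)})$. The homomorphism $\alpha$ factors through its image, giving a map $B\field^n \rightarrow B\mathrm{Im}(\alpha)$, and since $X^{\mathrm{Im}(\alpha)}$ carries the residual action needed, I would realize the composite as follows: the component $[x]$ selects a point (up to homotopy) of the fixed-point space $X^{\mathrm{Im}(\alpha)}$, yielding the first morphism $B\field^n \rightarrow B\field^n \times X^{\mathrm{Im}(\alpha)}$ via the diagonal paired with the constant map at $[x]$; the second morphism $B\field^n \times X^{\mathrm{Im}(\alpha)} \rightarrow EG\times_G X$ is induced by $\alpha$ together with the inclusion $X^{\mathrm{Im}(\alpha)} \hookrightarrow X$, using that a point fixed by $\mathrm{Im}(\alpha)$ gives a well-defined point in the Borel construction once $\field^n$ acts through $\alpha$. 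This produces a genuine homotopy class in $[B\field^n, EG\times_G X]$.

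Next I would address functoriality/naturality of the assignment. The key observation is that $EG\times_G X$ is the Borel construction, so a homotopy class of maps out of $B\field^n$ is detected by a homomorphism $\field^n \rightarrow G$ (the composite $B\field^n \rightarrow BG$) together with a compatible choice of fixed data. Concretely, the map $B\field^n \rightarrow EG\times_G X \rightarrow BG$ recovers $\alpha$ up to conjugacy, and the lift to the Borel construction records the component $[x]$. I would make this precise by using the fibration $X \rightarrow EG\times_G X \rightarrow BG$ and the standard identification of sections/lifts of $\alpha$ with fixed-point data, which is exactly the content packaged into $\mathcal{Y}_n$.

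The main obstacle will be verifying that the homotopy class depends only on $(\alpha, [x])$ and not on the intermediate choices --- in particular that conjugate $\alpha$'s and the residual $C_G(\mathrm{Im}(\alpha))$-action on components are correctly absorbed, so that the left $G$-action and right $\mathrm{Aut}(\field^n)$-action on $\mathcal{Y}_n$ are respected by the target $[B\field^n, EG\times_G X]$. This is essentially a bookkeeping check that the two factors of the construction are compatible: changing the representative of $[x]$ within its $\pi_0(X^{\mathrm{Im}(\alpha)})$-class does not change the homotopy class of the composite, and the constructed map is manifestly natural in the evident variables. Since the statement asserts only the existence of a map of sets (not yet its bijectivity or equivariance, which I expect to be treated in the subsequent surjectivity argument of \fullref{subs:surjectivity}), the verification reduces to checking well-definedness, which is routine once the geometric model above is in place.

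I would therefore present the construction in the order: (i) define the first morphism $B\field^n \rightarrow B\field^n \times X^{\mathrm{Im}(\alpha)}$ from $[x]$; (ii) define the second morphism $B\field^n \times X^{\mathrm{Im}(\alpha)} \rightarrow EG\times_G X$ from $\alpha$ via the Borel construction; (iii) observe that the composite has homotopy class depending only on $[x]$ and on $\alpha$; and (iv) conclude that the assignment $(\alpha, [x]) \mapsto [\text{composite}]$ defines the desired map of sets $\mathcal{Y}_n \rightarrow [B\field^n, EG\times_G X]$. The comparison with $\hom_{\unstalg}(H^*(EG\times_G X), H^*(B\field^n))$ and the surjectivity statement would then follow from Quillen-type descriptions (\fullref{thm-Quillen} and \fullref{exam:Rep-FX}), but those belong to the surjectivity argument rather than to the present lemma.
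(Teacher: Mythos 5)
Your construction is exactly the standard geometric one the paper invokes (its proof is a one-line citation to Section 6 of Hopkins--Kuhn--Ravenel): take $z \mapsto (z,x)$ for a chosen point $x$ in the component $[x]$, then pass to the Borel construction using that $\mathrm{Im}(\alpha)$ fixes $X^{\mathrm{Im}(\alpha)}$ pointwise, with well-definedness coming from a path between representatives of $[x]$; you also correctly note that equivariance and bijectivity belong to the later surjectivity argument, not to this lemma. One small wording slip: the first map is the identity paired with the constant map (the graph of a constant), not a ``diagonal,'' but your intent is unambiguous.
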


\begin{proof}
This is a standard geometric construction (see \cite[Section 6]{hkr}).
\end{proof}

The set $\mathcal{Y}_n$  is related to the functor $F_X$ defined in \fullref{exam:Rep-FX} by the following result.

\begin{prop}
\label{prop:gop-model}
There is a canonical isomorphism of right $\mathrm{Aut}(\field ^n)$--sets
$$
\Big\{\int _{\cala_p  (G) } F_ X \Big\}
(\field ^n ) 
\cong 
\mathcal{Y}_{n}/ G.
$$
\end{prop}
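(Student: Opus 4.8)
The plan is to write down an explicit map $\Phi$ from the coend presentation of $\{\int_{\cala_p(G)} F_X\}(\field^n)$ to $\mathcal{Y}_n/G$ and to show that it is an $\mathrm{Aut}(\field^n)$--equivariant bijection. Recall that an element of the coend is represented by a triple $(E, \bar f, \phi)$ with $E \in \cala_p(G)$, $\phi \in \hom(\field^n, E)$ and $\bar f \in F_X(E) = \pi_0(X^E)/C_G(E)$, subject to the relations of the coend. To such a triple I would associate the pair $(\alpha, [x])$, where $\alpha = \iota_E \circ \phi \co \field^n \to G$ is the composite of $\phi$ with the inclusion $\iota_E \co E \hookrightarrow G$, and $[x]$ is the image under the restriction $\pi_0(X^E) \to \pi_0(X^{\mathrm{Im}(\alpha)})$ (induced by $\mathrm{Im}(\alpha) \subseteq E$) of a chosen lift of $\bar f$; passing to the $G$--orbit defines $\Phi(E, \bar f, \phi) \in \mathcal{Y}_n/G$.

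First I would check that $\Phi$ is well defined. Independence of the chosen lift of $\bar f$ holds because two lifts differ by the action of some $c \in C_G(E)$, and since $\mathrm{Im}(\alpha) \subseteq E$ we have $c \in C_G(E) \subseteq C_G(\mathrm{Im}(\alpha))$, so $c$ fixes $\alpha$ under conjugation and the two resulting pairs lie in a single $G$--orbit. Compatibility with the coend relations reduces, since every morphism of $\cala_p(G)$ is a composite of inclusions and conjugations, to these two cases. For an inclusion $E_1 \hookrightarrow E_2$ the homomorphism $\alpha$ is unchanged and the two ways of restricting to $\pi_0(X^{\mathrm{Im}(\alpha)})$ agree by transitivity of restriction along $\mathrm{Im}(\alpha) \subseteq E_1 \subseteq E_2$. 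For a conjugation $c_g \co E_2 \to E_1 = gE_2g^{-1}$ the homomorphism becomes $g\alpha g^{-1}$ and the component is translated by $g$, which is exactly the left $G$--action defining $\mathcal{Y}_n$, so the $G$--orbit is preserved. Right $\mathrm{Aut}(\field^n)$--equivariance is immediate, as both actions are precomposition on the $\hom(\field^n, -)$ factor and precomposing by an automorphism leaves $\mathrm{Im}(\alpha)$ unchanged.

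Surjectivity is clear: given $(\alpha, [x]) \in \mathcal{Y}_n$, the triple $(\mathrm{Im}(\alpha), [x], \phi)$ with $\phi \co \field^n \twoheadrightarrow \mathrm{Im}(\alpha)$ is a preimage. For injectivity I would first use the inclusion relation to normalize any representative $(E, \bar f, \phi)$ to one with $\phi$ surjective, replacing $E$ by $\mathrm{Im}(\phi)$ (again an object of $\cala_p(G)$) along the inclusion $\mathrm{Im}(\phi) \hookrightarrow E$; after this step $E = \mathrm{Im}(\alpha)$ and $\bar f$ lifts directly. If two normalized triples $(E_i, \bar f_i, \phi_i)$ have the same image, then $(g\alpha_2 g^{-1}, [gx_2]) = (\alpha_1, [x_1])$ in $\mathcal{Y}_n$ for some $g \in G$; in particular $E_1 = gE_2g^{-1}$ and $\phi_1 = c_g \circ \phi_2$. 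The coend relation for the conjugation $c_g \co E_2 \to E_1$ then identifies $(E_1, \bar f_1, \phi_1)$ with $(E_2, (c_g)^* \bar f_1, \phi_2)$, and the description of $(c_g)^*$ as the translation $[y] \mapsto [g^{-1}y]$ on path components, combined with $[gx_2] = [x_1]$, gives $(c_g)^* \bar f_1 = \bar f_2$. Hence the two triples already coincide in the coend, proving injectivity.

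The main obstacle is the injectivity argument, and more precisely the bookkeeping of the functoriality of $F_X$: one must track how $F_X$ sends the inclusions and conjugations of $\cala_p(G)$ to the restriction and translation maps on $\pi_0$ of fixed-point sets, and how these interact with the centralizer quotients $\pi_0(X^E)/C_G(E)$. The conceptual point worth isolating is that the inclusion relations in the coend are what allow passage to the image subgroup (reducing to surjective $\phi$), while the conjugation relations reproduce exactly the left $G$--action defining $\mathcal{Y}_n/G$; once this dictionary between the coend relations and the two generating moves on $\mathcal{Y}_n$ is in place, all remaining verifications are routine matchings of representatives.
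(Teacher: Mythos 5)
Your proof is correct and takes essentially the same approach as the paper: the paper defines exactly the same map, induced on $F_X(E) \times \hom(\field^n, E)$ by composition with the inclusion $E \hookrightarrow G$ (and restriction of components), and then asserts that the induced morphism to $\mathcal{Y}_n/G$ is a surjection and a monomorphism, leaving the verification as straightforward. Your write-up simply supplies in full the details the paper omits — well-definedness over the coend relations, $\mathrm{Aut}(\field^n)$--equivariance, and the normalization to surjective $\phi$ followed by the conjugation relation for injectivity.
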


\begin{proof}
An object of $\cala_p (G)$ is an elementary abelian $p$--group $V$ equipped with a monomorphism $V \hookrightarrow G$. There is a map $F_X (V) \times \hom(\field ^n  , V) \rightarrow \mathcal{Y} _{n} $ which is induced by composition with $V\hookrightarrow G$. It is straightforward to verify that this induces a surjective morphism
$$
\Big\{
\int _{\cala _p (G) } F_ X
\Big\}
(\field ^n ) 
\twoheadrightarrow
\mathcal{Y}_{n} / G
$$
and that this is a monomorphism.
\end{proof}

\begin{cor}
\label{cor:Hurw-surj}
The Hurewicz morphism is surjective and there are bijections 
$$
g \op H^* (EG \times _G X) (\field ^n) \cong \mathcal{Y}_n /G \cong [B
\field^n, EG \times _G X]_H.
$$
\end{cor}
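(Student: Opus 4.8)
The goal is to prove \fullref{cor:Hurw-surj}, namely that the Hurewicz morphism $[B\field^n, EG\times_G X] \rightarrow \hom_{\unstalg}(H^*(EG\times_G X), H^*(B\field^n))$ is surjective, together with the chain of bijections identifying $g\op H^*(EG\times_G X)(\field^n)$, the quotient $\mathcal{Y}_n/G$, and $[B\field^n, EG\times_G X]_H$.

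The plan is to assemble the three identifications from results already established. First I would recall from \fullref{thm-Quillen} that there is an isomorphism $g\op H^*(EG\times_G X) \cong \int_{\cala_p(G)} F_X$ of functors in $\g\op$. Evaluating this isomorphism on $\field^n$ and then applying \fullref{prop:gop-model} gives the first bijection $g\op H^*(EG\times_G X)(\field^n) \cong \{\int_{\cala_p(G)} F_X\}(\field^n) \cong \mathcal{Y}_n/G$. Unwinding the definition of the functor $g$, the left-hand side is precisely $\hom_{\unstalg}(H^*(EG\times_G X), H^*(B\field^n))$, since $g$ sends an unstable algebra $K$ to the functor $V \mapsto \hom_{\unstalg}(K, H^*(BV))$; so this already identifies the target of the Hurewicz morphism with $\mathcal{Y}_n/G$.

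Next I would produce the surjectivity. The key input is the preceding lemma, which gives a map $\mathcal{Y}_n \rightarrow [B\field^n, EG\times_G X]$; composing with the canonical surjection $[B\field^n, EG\times_G X] \twoheadrightarrow [B\field^n, EG\times_G X]_H$ and passing the construction through the Hurewicz morphism, I would check that the composite $\mathcal{Y}_n \rightarrow g\op H^*(EG\times_G X)(\field^n) \cong \mathcal{Y}_n/G$ is exactly the canonical quotient by the $G$--action. Since this quotient is surjective, the Hurewicz morphism itself must be surjective onto $\hom_{\unstalg}(H^*(EG\times_G X), H^*(B\field^n))$. Once surjectivity is in hand, \fullref{lem:Hurewicz-surj} yields the final bijection $[B\field^n, EG\times_G X]_H \cong \hom_{\unstalg}(H^*(EG\times_G X), H^*(B\field^n)) \cong \mathcal{Y}_n/G$, completing the displayed chain.

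The main obstacle is the compatibility verification in the middle step: one must confirm that the geometrically defined map $\mathcal{Y}_n \rightarrow [B\field^n, EG\times_G X]$, after applying $H^*(-)$, agrees under the isomorphisms of \fullref{thm-Quillen} and \fullref{prop:gop-model} with the algebraically defined quotient map $\mathcal{Y}_n \twoheadrightarrow \mathcal{Y}_n/G$. This amounts to tracing a pair $(\alpha, [x])$ through the explicit coend description of $\int_{\cala_p(G)} F_X$ and matching it against the cohomology class represented by the associated map $B\field^n \rightarrow EG\times_G X$; the argument is essentially the standard identification used by Hopkins, Kuhn and Ravenel \cite{hkr}, and the naturality of the Hurewicz transformation makes the diagram commute. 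Everything else reduces to invoking the cited results and the definition of $g$.
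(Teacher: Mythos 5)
Your proposal is correct and follows exactly the route the paper intends: the paper states this corollary without proof precisely because it is the combination of \fullref{thm-Quillen}, \fullref{prop:gop-model}, the preceding lemma's geometric map $\mathcal{Y}_n \rightarrow [B\field^n, EG\times_G X]$, and \fullref{lem:Hurewicz-surj}, which is the assembly you describe. Your identification of the compatibility check (that the geometric construction realizes the canonical quotient $\mathcal{Y}_n \twoheadrightarrow \mathcal{Y}_n/G$ under the coend identification) as the only nontrivial verification is also the right reading of where the content lies.
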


\subsection{Hopkins--Kuhn--Ravenel generalized character theory}
\label{sect:gen-char}

This section exploits the work of Hopkins, Kuhn and Ravenel on
generalized group characters; for the convenience of the reader, a
rapid review of the theory is included. Refer to
the original source \cite{hkr} for the details. 

For $E$ a complex-oriented theory satisfying \fullref{hyp:E-hkr}, the ring $L (E^*)$ is the localization of the
continuous cohomology $\smash{E^* _{cont} (B \zed _p ^n)}$ inverting the
Euler classes corresponding to continuous morphisms $\zed ^n _p
\rightarrow S^1$ \cite{hkr}. There is a natural action of the group $\mathrm{Aut} (\zed_p ^n)$ of continuous automorphisms upon $L(E^*)$. 

 The following result contains the fundamental faithful-flatness property of the ring $L(E^*)$, which allows the simplification of arguments by using faithfully flat descent.

\begin{prop}{\rm \cite{hkr}}\qua
The ring $L(E^*)$ satisfies the following properties:
\begin{enumerate}
\item
 $L(E^*)$ is a faithfully flat $(`p^{-1} E^*`)$--module;
\item
the ring of invariants 
 $L(E^*)^{\mathrm{Aut}(\zed_p^n)}$ is isomorphic to $p^{-1} E^*$;
\item
there is an isomorphism of $L(E^*)$--modules
$$
L(E^*) \otimes _{E^* } E^* (B \field ^k)
\cong 
\amalg_{\hom (\field^n , \field  ^k) } L (E^*).
$$
\end{enumerate}
\end{prop}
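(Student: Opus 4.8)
The plan is to recognize each cohomology ring appearing here as the ring of functions on a scheme of homomorphisms into the formal group $\mathbb{G}$ of $E$, and then to exploit the fact that, after inverting $p$ and the Euler classes, the torsion of $\mathbb{G}$ becomes \'etale and split. Throughout, write $\mathbb{G}$ for the formal group over $E^*$, with $p^k$--series $[p^k](x)$; by \fullref{hyp:E-hkr} its reduction modulo $\mathfrak{m}$ has height $n$, so $[p^k](x)$ is, modulo $\mathfrak{m}$, a unit multiple of $x^{p^{nk}}$. Consequently $E^* (B\zed/p^k) \cong E^*[[x]]/([p^k](x))$ is a \emph{free} $E^*$--module of rank $p^{nk}$, and $E^*_{cont}(B\zed_p^n)$ is the coordinate ring of the scheme $\hom(\zed_p^n, \mathbb{G})$ of continuous homomorphisms, that is, of compatible $n$--tuples of torsion points.

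For the flatness assertion (1), I would first observe that $E^*_{cont}(B\zed_p^n)$ is a filtered colimit of the free $E^*$--modules $E^*\big((B\zed/p^k)^n\big)$, hence flat over $E^*$; the ring $L(E^*)$ is then a localization of $p^{-1}E^* \otimes_{E^*} E^*_{cont}(B\zed_p^n)$, and so remains flat over $p^{-1}E^*$. Faithfulness amounts to the nonvanishing of $L(E^*)$ after every base change, which follows from the hypothesis $p^{-1}E^* \neq 0$ of \fullref{hyp:E-hkr} together with the fact that inverting the Euler classes does not annihilate the ring: the height being finite, the nonidentity torsion sections persist over the generic fibre.

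The geometric heart is the trivialization of torsion used for (3). Over $p^{-1}E^*$ the finite flat group scheme $\mathbb{G}[p]$ has rank $p^n$ and is \'etale; by construction $L(E^*)$ is a ring over which every $p$--power torsion section becomes rational, so that $\mathbb{G}[p]\otimes L(E^*)$ is the constant group scheme on $\field^n$ and $E^*(B\field^n)\otimes L(E^*)$ acquires a tautological $L(E^*)$--point. Using the K\"unneth isomorphism $E^*(B\field^k)\cong E^*(B\zed/p)^{\otimes_{E^*} k}$, valid since the factors are free of rank $p^n$, the ring $E^*(B\field^k)$ is free of rank $p^{nk}$ over $E^*$. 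I would then produce the displayed splitting by exhibiting, for each $j\in\hom(\field^n,\field^k)$, the $L(E^*)$--algebra projection obtained by restricting along $Bj\co B\field^n\to B\field^k$ and evaluating at the tautological point; counting ranks, since $|\hom(\field^n,\field^k)| = p^{nk}$, and invoking the faithful flatness of (1) shows that the resulting map to $\amalg_{\hom(\field^n,\field^k)} L(E^*)$ is an isomorphism.

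Finally, for (2) I would establish that $p^{-1}E^* \hookrightarrow L(E^*)$ is a Galois extension with group $\mathrm{Aut}(\zed_p^n) = GL_n(\zed_p)$. The same trivialization shows that $\mathbb{G}[p^\infty]\otimes L(E^*)\cong (\mathbb{Q}_p/\zed_p)^n$, so that the Euler--inverted locus selects the \emph{isomorphisms} $\zed_p^n \xrightarrow{\sim} \mathbb{G}[p^\infty]\otimes L(E^*)$ and is therefore a $GL_n(\zed_p)$--torsor over $\mathrm{Spec}(p^{-1}E^*)$. Concretely, the two projections induce an isomorphism $L(E^*)\otimes_{p^{-1}E^*} L(E^*) \cong \mapc(GL_n(\zed_p), L(E^*))$, the defining property of a (pro\nobreakdash-)Galois extension, and faithfully flat descent using (1) then identifies $L(E^*)^{GL_n(\zed_p)}$ with $p^{-1}E^*$. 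I expect the main obstacle to be this last step: verifying the Galois isomorphism requires the full force of the trivialization of $\mathbb{G}[p^\infty]$ over $L(E^*)$, and the descent argument depends essentially on the faithful flatness proved in (1).
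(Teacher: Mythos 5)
The first thing to say is that the paper contains no proof of this Proposition at all: it is imported verbatim from \cite{hkr} (note the citation in the statement and the absence of any proof environment), so the only meaningful comparison is with the argument of Hopkins, Kuhn and Ravenel themselves. Your geometric skeleton is essentially the correct reading of their Section 6: Weierstrass preparation (via the height--$n$ hypothesis) gives that $E^*(B(\zed/p^k)^n)$ is finite free over $E^*$; once $p$ is inverted the torsion $\mathbb{G}[p^k]$ of the formal group is finite \'etale; $L(E^*)$ is the ring classifying full level structures; and statement (2) follows from the torsor property by faithfully flat descent. However, two of your inferences are not proofs as written, and one of them relies on a criterion that is simply false.

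The faithfulness step in (1) is the serious gap. For a flat module, nonvanishing ``over the generic fibre'' does not imply faithful flatness: $\mathbb{Q}$ is flat over $\zed_{(p)}$ and nonzero generically, yet not faithfully flat. What must be checked is that $L(E^*)\otimes_{p^{-1}E^*}\kappa(\mathfrak{m})\neq 0$ at \emph{every} maximal ideal $\mathfrak{m}$ of $p^{-1}E^*$, and your sketch gives no mechanism for this. Two standard repairs exist: fibrewise, since $p$ is invertible in every residue field $\kappa(\mathfrak{m})$, the group scheme $\mathbb{G}[p^k]$ is finite \'etale of rank $p^{nk}$ there, hence geometrically isomorphic to $(\zed/p^k)^n$, so the level-structure locus has $|\mathrm{GL}_n(\zed/p^k)|>0$ geometric points and its coordinate ring $L(E^*)\otimes\kappa(\mathfrak{m})$ is nonzero; alternatively (closer to \cite{hkr}) one works with the divided $p$--series $\langle p^k\rangle(x)=[p^k](x)/[p^{k-1}](x)$, whose quotient rings are finite free of positive rank by Weierstrass preparation and in which the Euler class $x$ is automatically invertible once $p$ is, because $\langle p^k\rangle(0)=p$ forces a relation $p+x\,h(x)=0$. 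Either way, an argument at every closed point is unavoidable.

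In (3), the inference ``counting ranks and invoking faithful flatness shows the map is an isomorphism'' is not valid: a map of free modules of equal finite rank need not be an isomorphism, and indeed the analogous map exists and fails to be an isomorphism \emph{before} the Euler classes are inverted, so no argument which never uses their invertibility can succeed. The missing ingredient is that for distinct $j_1\neq j_2$ in $\hom(\field^n,\field^k)$ the two corresponding sections of $\mathbb{G}[p]$ over $L(E^*)$ are disjoint: their formal difference is a unit multiple of the Euler class of the nontrivial character $j_1-j_2$ evaluated on the tautological level structure, hence invertible by the very definition of $L(E^*)$. This disjointness produces the orthogonal idempotents (Chinese Remainder) which make your map to $\amalg_{\hom(\field^n,\field^k)}L(E^*)$ surjective, and only then does equality of ranks upgrade surjectivity to bijectivity. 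The same point is hidden in your assertion that $\mathbb{G}[p]\otimes L(E^*)$ is constant --- possessing enough rational sections does not make a finite flat group scheme constant; one needs pairwise disjointness plus the rank count. Finally, note the logical ordering: the torsor property you invoke for (2), if proved directly by the fibrewise \'etale counting above, would also supply the faithfulness missing from (1); as written, (2) leans on (1), whose justification leans on facts you only establish (loosely) in the course of (2) and (3).
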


Let $\mathrm{Fix}_{n} (G, X)$ denote the $G \times
\mathrm{Aut}(\zed_p^n)$--space $\amalg_{a \in \hom (\zed_p^n , G) } X
^{\mathrm{Im}(a)}$. The ring of generalized class functions is
$$
Cl_{n} (G, X; L(E^*)) \co = L (E^* ) \otimes _{E^*} E^* (\mathrm{Fix}_{n} (G, X))^G,
$$
which is an $L(E^*)$--algebra with an $\mathrm{Aut}(\zed_p^n)$--action. The generalized character map is the morphism of $E^*$--algebras 
$$
\chi^G _{n} \co 
E^* (EG\times _G X) 
\rightarrow Cl_{n} (G, X; L(E^*) ) ^{\mathrm{Aut}(\zed_p^n)},
$$
which is induced by the product of the  morphisms $
B (\zed/p^m) ^n \times X ^{\mathrm{Im} (a)} 
\rightarrow EG \times _G X
$ induced by  $a\co  (\zed/p^m)  ^n \rightarrow G$.

\begin{thm}
{\rm \cite[Theorem C]{hkr}}\qua
\label{thm:thmC} For
$E$ a complex-oriented theory satisfying \fullref{hyp:E-hkr}, the generalized character map $\chi^G _{n}$ induces  isomorphisms:
\begin{eqnarray*}
p^{-1} E^* (EG\times _G X) 
&\stackrel{\cong}{\longrightarrow}&
Cl_{n} (G, X; L(E^*) ) ^{\mathrm{Aut}(\zed_p^n)};
\\
L(E^* ) \otimes _{E^*}  E^* (EG\times _G X) 
&\stackrel{\cong}{\longrightarrow}&
Cl_{n} (G, X; L(E^*) ).
\end{eqnarray*}
\end{thm}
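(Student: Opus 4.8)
This is \cite[Theorem C]{hkr}; I indicate how the two isomorphisms would be obtained, taking for granted the formal properties of $L(E^*)$ recorded in the Proposition above. The two statements are not independent: since the character map $\chi^G_n$ is $\mathrm{Aut}(\zed_p^n)$--equivariant by construction, the plan is to deduce the first isomorphism from the second by faithfully flat descent. Granting $L(E^*)\otimes_{E^*} E^*(EG\times_G X)\cong Cl_n(G,X;L(E^*))$, one applies $\mathrm{Aut}(\zed_p^n)$--invariants. The target becomes $Cl_n(G,X;L(E^*))^{\mathrm{Aut}(\zed_p^n)}$ on the nose, while on the source the identification $L(E^*)\otimes_{E^*}(-)=L(E^*)\otimes_{p^{-1}E^*}p^{-1}(-)$, the faithful flatness of $L(E^*)$ over $p^{-1}E^*$, and $L(E^*)^{\mathrm{Aut}(\zed_p^n)}\cong p^{-1}E^*$ give $\big(L(E^*)\otimes_{E^*} E^*(EG\times_G X)\big)^{\mathrm{Aut}(\zed_p^n)}\cong p^{-1}E^*(EG\times_G X)$. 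Thus the whole weight of the theorem rests on the second isomorphism.

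For the second isomorphism I would argue by comparison of cohomology theories in the variable $X$. Both $X\mapsto L(E^*)\otimes_{E^*} E^*(EG\times_G X)$ and $X\mapsto Cl_n(G,X;L(E^*))$ are cohomology theories on finite $G$--CW--complexes and $\chi^G_n$ is a natural transformation between them; for the source this uses flatness of $L(E^*)$ over $E^*$, and for the target it is part of the character formalism of \cite{hkr}. By the five lemma and induction over the $G$--cells it then suffices to prove that $\chi^G_n$ is an isomorphism on orbits $X=G/H$. Since $EG\times_G(G/H)\simeq BH$ and the fixed-point decomposition of $\mathrm{Fix}_n(G,G/H)$ over double cosets identifies $\chi^G_n$ on $G/H$ with the character map $\chi^H_n$ of $H$ on a point, this reduces the claim to the case $X=\mathrm{pt}$ for an arbitrary finite group.

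For $X=\mathrm{pt}$ the assertion is that $L(E^*)\otimes_{E^*} E^*(BG)$ is the ring of $G$--conjugation--invariant functions on $\hom(\zed_p^n,G)$ with values in $L(E^*)$. I would treat first the abelian case: every continuous homomorphism $\zed_p^n\to G$ has abelian $p$--group image, and for $A=\field^k$ the content is exactly part~(3) of the Proposition on $L(E^*)$, which identifies $L(E^*)\otimes_{E^*} E^*(B\field^k)$ with the functions on $\hom(\field^n,\field^k)=\hom(\zed_p^n,\field^k)$. For a general finite abelian $p$--group the same Euler--class splitting applies to $E^*(B\zed/p^k)\cong E^*[[x]]/([p^k](x))$: over $L(E^*)$ the $p^k$--series $[p^k](x)$ has exactly $p^{kn}$ roots, the $p^k$--torsion of the height--$n$ formal group, so $L(E^*)\otimes_{E^*} E^*(B\zed/p^k)$ becomes the functions on $\hom(\zed_p^n,\zed/p^k)$, and Künneth products then cover all finite abelian $p$--groups.

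The step I expect to be the main obstacle is the passage from abelian groups to a general finite group $G$, that is, showing that restriction to abelian $p$--subgroups (equivalently the character map) is both injective and surjective on $L(E^*)\otimes_{E^*} E^*(BG)$. This is the technical heart of \cite{hkr}: it rests on a transfer argument of generalized Artin/Atiyah induction type, controlling $p^{-1}E^*(BG)$ by its restrictions to abelian subgroups once the Euler classes are inverted in $L(E^*)$, and it is here that the height--$n$ hypothesis and the faithful flatness of $L(E^*)$ are used essentially rather than formally. Once this \emph{detection} on abelian subgroups is in place, assembling the abelian computations over the conjugacy classes of abelian $p$--subgroups of $G$ produces precisely the ring of conjugation--invariant $L(E^*)$--valued functions on $\hom(\zed_p^n,G)$, yielding the second isomorphism and hence the theorem.
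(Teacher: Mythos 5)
The paper offers no proof of this statement at all: it is quoted verbatim as Theorem C of Hopkins--Kuhn--Ravenel \cite{hkr}, with the citation serving as the proof. Your proposal is consistent with this---you correctly identify the result as HKR's and your outline (deducing the $p^{-1}E^*$ statement from the $L(E^*)$ statement by faithfully flat/Galois descent using $L(E^*)^{\mathrm{Aut}(\zed_p^n)}\cong p^{-1}E^*$, reducing over $G$--cells to orbits $G/H$ and hence to $X=\mathrm{pt}$, computing the abelian case from the torsion of the height--$n$ formal group, and deferring the non-abelian detection step to the technical core of \cite{hkr}) is a faithful account of how that theorem is actually proved there, so nothing further is required.
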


\subsection{Separation}\label{subs:Separation}
Throughout this section $E, G$ and $X$ satisfy the hypotheses of \fullref{thm:thmC}.

It is necessary to show that the map $[B \field ^n , EG \times _G X]_H
\rightarrow [B \field ^n , EG \times _G X]_E$ is  injective; this is
equivalent to showing that the map $\gamma$  in the diagram 
$$
\xymatrix@C=-10pt{
[B \field ^n , EG {\times _G} X]_H
\ar[r]
\ar[rd]^\gamma
&
\hom_{E^*} (E^* (EG {\times _G} X) , E^* (B\field ^n))
\ar@{^(->}[d]
\\
&
\hom_{L(E^*)} (L(E^*){\otimes _{E^*}} E^* (EG {\times _G} X) ,L(E^*){\otimes
_{E^*}}  E^* (B\field ^n))
}
$$
is injective, since the vertical morphism is an injection, by faithful flatness of
$L (E^*) $ over $E^*$.

The set $[B \field ^n , EG \times _G X]_H$ identifies with
$\mathcal{Y}_n /G$ by \fullref{cor:Hurw-surj}  and the ring $L(E^*)\otimes _{E^*} E^* (EG \times
_G X)$ identifies with $Cl_{n} (G, X; L(E^*) )$ by \fullref{thm:thmC}.

\begin{lem}
\label{lem:surj-Y}
There is a surjection 
$$
Cl_{n} (G, X; L(E^*) )
\twoheadrightarrow 
L(E^*) \otimes _{E^*} E^* (\mathcal{Y}_{n} ) ^G
$$
which is induced by restriction to elementary abelian subgroups and the passage to  path-connected components.
\end{lem}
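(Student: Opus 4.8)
First I would unwind the definitions of both sides. The source $Cl_{n}(G,X;L(E^*))$ is $L(E^*)\otimes_{E^*}E^*(\mathrm{Fix}_n(G,X))^G$, where $\mathrm{Fix}_n(G,X)=\amalg_{a\in\hom(\zed_p^n,G)}X^{\mathrm{Im}(a)}$ records continuous homomorphisms $\zed_p^n\to G$ together with fixed-point data. The target is built from $\mathcal{Y}_n$, the set of pairs $(\alpha,[x])$ with $\alpha\co\field^n\to G$ and $[x]\in\pi_0(X^{\mathrm{Im}(\alpha)})$. The plan is to produce the surjection as the $L(E^*)\otimes_{E^*}(-)^G$ image of a single $G$--equivariant map of spaces, or more precisely of a map on $E^*$--cohomology. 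The two conceptual differences between the index sets are exactly the two operations named in the statement: one passes from the profinite source $\zed_p^n$ to the finite quotient $\field^n=\zed_p^n/p$, and one passes from the space $X^{\mathrm{Im}(a)}$ to its set of path components.

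Next I would construct the map before applying $L(E^*)\otimes_{E^*}(-)^G$. A homomorphism $\alpha\co\field^n\to G$ pulls back along the quotient $\zed_p^n\twoheadrightarrow\field^n$ to a continuous homomorphism $a\co\zed_p^n\to G$ with $\mathrm{Im}(a)=\mathrm{Im}(\alpha)$, so this identifies $\hom(\field^n,G)$ as a $G$--stable subset of the index set $\hom(\zed_p^n,G)$ (those homomorphisms killing $p$). On the fixed-point factors, the canonical map $X^{\mathrm{Im}(a)}\to\pi_0(X^{\mathrm{Im}(a)})$ induces in $E^*$--cohomology a map $E^*(\pi_0(X^{\mathrm{Im}(a)}))\to E^*(X^{\mathrm{Im}(a)})$ in the \emph{opposite} direction; I would therefore be careful about variance and instead realize $E^*(\mathcal{Y}_n)$ as the cohomology of the space $\amalg_{\alpha}\pi_0(X^{\mathrm{Im}(\alpha)})$, viewed as a discrete space, receiving a $G$--equivariant map from the summands of $\mathrm{Fix}_n(G,X)$ indexed by the $p$--torsion homomorphisms. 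Restricting $\mathrm{Fix}_n(G,X)$ to this $G$--stable subcollection of summands and then collapsing each $X^{\mathrm{Im}(\alpha)}$ to $\pi_0$ gives a $G$--map whose effect on $E^*$, after applying $L(E^*)\otimes_{E^*}(-)$ and taking $G$--invariants, is the asserted morphism.

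The main obstacle, and where the real work lies, is \textbf{surjectivity} rather than the mere existence of the map. Restriction to a $G$--stable subset of summands is surjective on $E^*$ after the faithfully flat base change (a product splits off a factor), and $(-)^G$ is exact enough here because we work over the $p^{-1}E^*$--module structure with $|G|$ invertible in $p^{-1}E^*$, so taking $G$--invariants is a retract of the identity and preserves surjections. The delicate point is the collapse $X^{\mathrm{Im}(\alpha)}\to\pi_0(X^{\mathrm{Im}(\alpha)})$: I must argue that the induced map $E^*(\pi_0)\to E^*(X^{\mathrm{Im}(\alpha)})$ is a split injection so that the \emph{dual} quotient to $E^*(\mathcal{Y}_n)^G$ is surjective. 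This uses that $X$ is a finite $G$--CW--complex, so each fixed-point space $X^{\mathrm{Im}(\alpha)}$ is a finite CW--complex with finitely many components, and the map to $\pi_0$ is split by choosing a basepoint in each component; functoriality of $E^*$ then yields the splitting. I would assemble these splittings $G$--equivariantly, which is possible after averaging over $G$ since $|G|$ is invertible, and conclude that the composite induces the desired surjection onto $L(E^*)\otimes_{E^*}E^*(\mathcal{Y}_n)^G$.
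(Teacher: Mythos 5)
Your proposal is correct and follows essentially the same route as the paper: restrict $\mathrm{Fix}_{n}(G,X)$ to the summands indexed by homomorphisms factoring through $\field^n$, obtain the surjection onto $E^*(\mathcal{Y}_n)$ from the basepoint-inclusion/$\pi_0$-projection retraction (the collapse map inducing a split injection whose retraction is the desired surjection), and conclude after applying $L(E^*)\otimes_{E^*}(-)$ and $(-)^G$ using that $|G|$ is invertible. The only cosmetic difference is in handling $G$-linearity of the basepoint-induced map: the paper observes that the basepoint inclusion is $G$-equivariant up to homotopy, hence already induces a morphism of $G$-modules, whereas you average the splitting over $G$; both are valid given the invertibility of $|G|$ in $p^{-1}E^*$.
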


\begin{proof}
The inclusion of the subspace $\amalg _{a \in \hom (\field ^n, G)} X ^{\mathrm{Im}(a)}$ of $\mathrm{Fix}_{n}(G, X)$ induces a surjection $E^* (\mathrm{Fix}_{n}(G, X))\twoheadrightarrow E^* ( \amalg _{a \in \hom (\field ^n, G)} X ^{\mathrm{Im}(a)})$, which is a morphism of $G$--modules. 

There is a morphism of spaces $\mathcal{Y} _{n} \rightarrow \amalg _{a
  \in \hom (\field ^n, G)} X ^{\mathrm{Im}(a)}$ induced by choice of
basepoint in each path-connected component. The morphism has a retract
provided by the natural projection $1 \rightarrow \pi_0$ and it
preserves the $G$--action up to homotopy; hence the morphism  gives
rise to a  surjection 
$$E^*  ( \amalg _{a \in \hom (\field ^n, G)} X
^{\mathrm{Im}(a)})\twoheadrightarrow E^* (\mathcal{Y}_{n})$$
 of $G$--modules. 

The composite induces the required surjection, either by constructing a section as a morphism of $G$--modules or by using the exactness of the $G$--fixed point functor, as in the proof of \cite[Theorem C]{hkr}.
\end{proof}

\begin{rem}
The $L(E^*)$--module $L(E^*) \otimes _{E^*} E^* (\mathcal{Y}_{n} )$ is a permutation $G$--mod\-ule, free on basis indexed by the elements of $\mathcal{Y} _{n}$. It follows that the $G$--invariants form a free $L(E^*)$--module  indexed over the orbits $\mathcal{Y}_{n} / G$. 
\end{rem}

\begin{lem}
The morphism $\gamma$ factorizes as 
$$
\xymatrix@C=-3pt{
[B \field ^n , EG {\times _G} X]_H
\ar[r]^(.27){\tilde \gamma}
\ar[rd]_\gamma
&
\hom_{L(E^*)} (L(E^*) {\otimes _{E^*}} E^* (\mathcal{Y}_n ) ^G  ,L(E^*){\otimes
_{E^*}} E^* (B\field ^n))
\ar@{^(->}[d]
\\
&\hom_{L(E^*)} (L(E^*){\otimes _{E^*}} E^* (EG \times _G X) ,L(E^*){\otimes
_{E^*}}  E^* (B\field ^n))
}
$$
in which the  inclusion is induced by the morphism of  \fullref{lem:surj-Y}.
\end{lem}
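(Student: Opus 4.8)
The plan is to verify the factorization one element at a time, using that the vertical inclusion in the diagram is a monomorphism: the contravariant functor $\hom_{L(E^*)}(-, L(E^*)\otimes_{E^*} E^*(B\field^n))$ carries the surjection $q\co Cl_{n}(G,X;L(E^*))\twoheadrightarrow L(E^*)\otimes_{E^*} E^*(\mathcal{Y}_{n})^G$ of \fullref{lem:surj-Y} to an injection, so any set-theoretic lift $\tilde\gamma$ is automatically unique and hence natural. First I would install the identifications already available: by \fullref{cor:Hurw-surj} the set $[B\field^n, EG\times_G X]_H$ is $\mathcal{Y}_{n}/G$, and by \fullref{thm:thmC} the module $L(E^*)\otimes_{E^*} E^*(EG\times_G X)$ is $Cl_{n}(G,X;L(E^*))$. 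Under these identifications an orbit $f\in\mathcal{Y}_{n}/G$ is represented by a pair $(\alpha,[x])$ with $\alpha\co\field^n\rightarrow G$ and $[x]\in\pi_0(X^{\mathrm{Im}(\alpha)})$, and $\gamma(f)$ is the $L(E^*)$-linear extension of $\phi_f^*$, where $\phi_f\co B\field^n\rightarrow EG\times_G X$ is the map associated to $(\alpha,[x])$.

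Next I would record the geometry of $\phi_f$. By construction it factors as $B\field^n\rightarrow B\field^n\times X^{\mathrm{Im}(\alpha)}\rightarrow EG\times_G X$, in which the first map selects the component $[x]$ and the second is induced by $\alpha$. The composite $B\field^n\times X^{\mathrm{Im}(\alpha)}\rightarrow EG\times_G X$ is exactly one of the structure maps defining the generalized character map $\chi^G_{n}$ of \fullref{thm:thmC} — the map at level $m=1$ indexed by the homomorphism $\alpha\in\hom(\field^n, G)$. Consequently, after base change to $L(E^*)$ and replacement of the source by $Cl_{n}(G,X;L(E^*))$ via \fullref{thm:thmC}, the homomorphism $\gamma(f)$ is computed through the $\alpha$-component of the class function, followed by the restriction along the inclusion of the component $[x]$.

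I would then match this two-stage description against the two stages of $q$ in \fullref{lem:surj-Y}. The restriction from $\hom(\zed_p^n, G)$ to $\hom(\field^n, G)$ retains precisely the elementary-abelian index $\alpha$ carried by $\phi_f$, while the passage to path components $1\rightarrow\pi_0$ records precisely the component $[x]$; thus the data on which $\gamma(f)$ depends are exactly the data preserved by $q$. Hence $\gamma(f)$ annihilates $\ker q$ and factors uniquely as $\tilde\gamma(f)\circ q$ with $\tilde\gamma(f)\co L(E^*)\otimes_{E^*} E^*(\mathcal{Y}_{n})^G\rightarrow L(E^*)\otimes_{E^*} E^*(B\field^n)$. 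Since the vertical inclusion is induced by $q$ and is a monomorphism, this yields the asserted factorization $\gamma=(\text{incl})\circ\tilde\gamma$, with $\tilde\gamma$ uniquely determined and therefore a well-defined map of sets.

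I expect the main obstacle to be the bookkeeping in the comparison of the previous two paragraphs: one must verify both that $\phi_f$ genuinely occurs among the structure maps of $\chi^G_{n}$ and that the restriction-to-elementary-abelians-then-$\pi_0$ construction of $q$ in \fullref{lem:surj-Y} captures exactly the information surviving in $\phi_f^*$ after base change. This requires unwinding the definition of the generalized character map together with the description of the surjection of \fullref{lem:surj-Y}, and tracking the $G$- and $\mathrm{Aut}(\zed_p^n)$-equivariance throughout; once the two descriptions are aligned, the vanishing of $\gamma(f)$ on $\ker q$, and hence the factorization, is forced.
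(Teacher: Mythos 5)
You take the same route as the paper: the paper's entire proof of this lemma is the single sentence ``The result follows from the definition of the generalized character map,'' and your proposal is precisely an unwinding of that sentence --- identify $[B\field^n, EG\times_G X]_H$ with $\mathcal{Y}_n/G$ via \fullref{cor:Hurw-surj}, recognize $\phi_f$ among the structure maps defining $\chi^G_n$, and check that the surjection $q$ of \fullref{lem:surj-Y} retains all the class-function data that $\gamma(f)$ uses, so that $\gamma(f)$ annihilates $\ker q$ and factors, uniquely, through the monomorphism induced by $q$.

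One step of your bookkeeping is, however, stated incorrectly and needs repair before the matching in your third paragraph is valid. You assert that $\gamma(f)$ ``is computed through the $\alpha$--component of the class function, followed by the restriction along the inclusion of the component $[x]$.'' That composite is a map $Cl_{n}(G,X;L(E^*)) \rightarrow L(E^*)$, whereas $\gamma(f)$ takes values in $L(E^*)\otimes_{E^*} E^*(B\field^n)$, which is a free $L(E^*)$--module indexed by $\hom(\field^n,\field^n)$ (the third property of $L(E^*)$ recalled in \fullref{sect:gen-char}, equivalently \fullref{thm:thmC} applied to the pair $(\field^n,\mathrm{pt})$). Under this decomposition, the $\lambda$--component of $\gamma(f)(c)$, for $\lambda \in \hom(\field^n,\field^n)$, is the value of the class function $c$ at the \emph{twisted} homomorphism $\alpha\circ\lambda\co \zed_p^n \rightarrow G$, restricted to the component of $x$ in $X^{\mathrm{Im}(\alpha\lambda)}$; this is because $\phi_f \circ B\lambda \simeq \phi_{(\alpha\lambda,[x])}$, so your description captures only the $\lambda = \mathrm{id}$ component. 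The repair is harmless: every $\alpha\lambda$ still factors through $\field^n$, and restriction to a point depends only on its path component, so all of these values are among the data retained by $q$, and the conclusion that $\gamma(f)$ kills $\ker q$ --- hence the asserted factorization --- stands. But carried out literally as written, your matching would only factorize one of the $|\hom(\field^n,\field^n)|$ components of $\gamma$, so the twists must be incorporated into the comparison with the two stages of $q$.
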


\begin{proof}
The result follows from the definition of the generalized character map. 
\end{proof}

Hence, the proof is completed by the following result.

\begin{prop}
\label{prop:gammabar-mono}
The morphism
$
\tilde{\gamma}
$
 is a monomorphism. 
\end{prop}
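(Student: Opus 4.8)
The plan is to exploit the fact that both the source and the target of the $L(E^*)$--linear maps indexed by $\tilde\gamma$ are \emph{free} $L(E^*)$--modules with explicit bases, and then to recover the orbit of a class from a single coordinate. By the Remark following \fullref{lem:surj-Y}, the source, which I write $M$ for $L(E^*)\otimes_{E^*} E^*(\mathcal{Y}_n)^G$, is free over $L(E^*)$ with basis the orbit set $\mathcal{Y}_n/G$; dually, an element of $M$ is a $G$--invariant function $\mathcal{Y}_n \to L(E^*)$, that is, a function on $\mathcal{Y}_n/G$. By part (3) of the Hopkins--Kuhn--Ravenel Proposition on $L(E^*)$ \cite{hkr}, the target, which I write $N$ for $L(E^*)\otimes_{E^*} E^*(B\field^n)$, is free with basis $\hom(\field^n,\field^n)=\mathrm{End}(\field^n)$, the projection onto the factor indexed by $\rho$ being evaluation of the generalized character at $\rho$. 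Since $\tilde\gamma$ is a map of sets out of $\mathcal{Y}_n/G \cong [B\field^n, EG\times_G X]_H$ (using \fullref{cor:Hurw-surj}), it suffices to separate two distinct orbits.

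First I would make the homomorphism $\tilde\gamma([(\alpha,[x])])\co M \to N$ explicit. The class $[(\alpha,[x])]$ corresponds to the geometric map $f\co B\field^n \to EG\times_G X$ associated to the pair $(\alpha,[x])$, and $\tilde\gamma([(\alpha,[x])])$ is the map induced by $f$ on $L(E^*)$--tensored $E^*$--cohomology, transported through the isomorphism of \fullref{thm:thmC} and factored across the surjection of \fullref{lem:surj-Y}. Tracing the definition of the generalized character map $\chi^G_n$ through these identifications, precomposition of $f$ with $B\rho$ (for $\rho\in\mathrm{End}(\field^n)$) replaces the pair $(\alpha,[x])$ by $(\alpha\rho, \rho^\sharp[x])$, where $\rho^\sharp$ is the map $\pi_0(X^{\mathrm{Im}(\alpha)}) \to \pi_0(X^{\mathrm{Im}(\alpha\rho)})$ induced by the inclusion of fixed-point sets $X^{\mathrm{Im}(\alpha)} \hookrightarrow X^{\mathrm{Im}(\alpha\rho)}$. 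Hence I expect the formula
\[
\tilde\gamma([(\alpha,[x])])(\psi)(\rho) = \psi\big([(\alpha\rho, \rho^\sharp[x])]\big),
\]
for $\psi \in M$ regarded as a function on $\mathcal{Y}_n/G$ and $\rho \in \mathrm{End}(\field^n)$, up to a unit of $L(E^*)$ arising from the normalization of the character isomorphism.

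With this formula in hand the conclusion is immediate. Composing $\tilde\gamma([(\alpha,[x])])$ with the projection of $N$ onto the factor indexed by the identity endomorphism $\mathrm{id}\in\mathrm{End}(\field^n)$ yields the functional $M \to L(E^*)$ given by $\psi \mapsto \psi([(\alpha,[x])])$, which is, up to a unit, the coordinate functional dual to the basis element $[(\alpha,[x])]$ of the free module $M$. These coordinate functionals are pairwise distinct for distinct orbits; evaluating on the indicator function of a single orbit shows that $\tilde\gamma([(\alpha_1,[x_1])])$ and $\tilde\gamma([(\alpha_2,[x_2])])$ differ whenever the orbits differ. Therefore $\tilde\gamma$ is a monomorphism.

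The main obstacle is the middle step: verifying the explicit formula for $\tilde\gamma$, which requires carefully matching the geometric description of the maps $B\field^n \to EG\times_G X$ with the combinatorics of the character map $\chi^G_n$, the faithfully flat descent isomorphism of \fullref{thm:thmC}, and the restriction surjection of \fullref{lem:surj-Y}. Once the identity--component computation is pinned down — in particular that the associated coefficient is a unit rather than zero, which follows from part (3) of the Hopkins--Kuhn--Ravenel Proposition — the separation of orbits is purely formal.
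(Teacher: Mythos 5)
Your proof is correct and follows the same strategy as the paper's: both exploit that $L(E^*)\otimes_{E^*}E^*(\mathcal{Y}_n)^G$ is a free $L(E^*)$--module on the orbit set $\mathcal{Y}_n/G$ and recover the orbit $\bar y$ from the homomorphism $\tilde\gamma(\bar y)$ by projecting onto a suitable coordinate. The difference is one of precision, and it is in your favour. The paper asserts that $\tilde\gamma(\bar y)$ itself ``factors across the projection onto the direct summand indexed by $\bar y$''; your formula $\tilde\gamma([(\alpha,[x])])(\psi)(\rho)=\psi([(\alpha\rho,\rho^\sharp[x])])$ shows that this is literally too strong: the coordinate of the target at $\rho\neq\mathrm{id}$ (for instance $\rho=0$, which lands in the orbit of $(0,[x])$ with $[x]$ viewed in $\pi_0(X)$) involves values of $\psi$ on orbits other than $\bar y$. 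What is true, and what your argument isolates, is that the composite of $\tilde\gamma(\bar y)$ with the projection of $L(E^*)\otimes_{E^*}E^*(B\field^n)$ onto the factor indexed by $\mathrm{id}\in\mathrm{End}(\field^n)$ is the coordinate functional dual to the basis element $\bar y$ of the source; this separates orbits exactly as in the paper's concluding step. Your ``main obstacle'' --- the identity $f_{(\alpha,[x])}\circ B\rho\simeq f_{(\alpha\rho,\rho^\sharp[x])}$ combined with naturality of the character map and the identifications of \fullref{thm:thmC} and \fullref{lem:surj-Y} --- is precisely the point the paper leaves implicit, and your sketch of it is correct; note also that the unit ambiguity you allow for does not actually arise, since every map involved is assembled from unital ring homomorphisms, so the identity coordinate is exactly $\psi\mapsto\psi(\bar y)$.
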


\begin{proof}
The $L(E^*)$--module $L(E^*) \otimes _{E^*} E^* (\mathcal{Y}_{n} )^G$ is a free $L(E^*)$--module on basis indexed by the orbits $\mathcal{Y}_{n} /G$. For $\bar{y}$ an orbit of $\mathcal{Y}_{n} /G$, the morphism $\tilde{\gamma} (\bar{y})$ is nontrivial and factors across the projection  onto the direct summand indexed by $\bar{y}$. The required injectivity follows.   
\end{proof}


\bibliographystyle{gtart}
\bibliography{link}
 
\end{document}